\DeclareFontFamily{OT1}{pzc}{}
\DeclareFontShape{OT1}{pzc}{m}{it}{<-> s * [1.15] pzcmi7t}{}
\DeclareMathAlphabet{\mathpzc}{OT1}{pzc}{m}{it}
\DeclareMathOperator{\nest}{\odot}
\DeclareMathOperator*{\Nest}{\bigodot}
\DeclareMathOperator{\len}{length}
\DeclareMathOperator*{\argmin}{arg\,min}
\newcommand{\key}[1]{\emph{#1}}
\newcommand{\mat}[1]{\mathsf{#1}}
\newcommand{\powset}[1]{2^{#1}}
\newcommand{\nepowset}[1]{\mathcal{P}_{\ge1}(#1)}
\let\emptyset\varnothing
\newcommand{\reduc}[1]{\mathpzc{r}^{#1}}
\newcommand{\Reduc}[1]{\mathpzc{R}^{\!#1}}
\newcommand{\dress}[2]{\delta^{#1}_{#2}}
\newcommand{\Dress}[2]{\Delta^{\!#1}_{#2}}
\newcommand{\sumdress}[2]{\mathpzc{d}^{#1}_{#2}}
\newcommand{\sumDress}[2]{\mathpzc{D}^{#1}_{#2}}
\newcommand{\fps}[2]{#1\langle\!\langle#2\rangle\!\rangle}
\newcommand{\dsig}{\mathpzc{K}}
\newcommand{\widesim}[2][1.7]{\mathrel{\overset{#2}{\scalebox{#1}[1]{$\sim$}}}}
\newcommand{\Kequiv}{\widesim{\dsig}}
\newcommand{\KLequiv}{\widesim{(\dsig,l)}}
\newcommand{\locdepth}[1]{\ell^{#1}}
\newcommand{\G}{\mathcal{G}}
\newcommand{\wt}{\mathrm{W}} 
\newcommand{\ew}{\mathsf{w}}
\newcommand{\V}{\mathcal{V}}
\newcommand{\E}{\mathcal{E}}
\newcommand{\gentree}{\textsl{T}}
\newcommand{\sigtree}{\textsl{S}}
\newcommand{\hedge}{\textsl{H}}
\newcommand{\rootnode}[1]{\textsl{#1.root}}
\newtheorem{theorem}{Theorem}[section]
\newtheorem{prop}[theorem]{Proposition}
\newtheorem{lem}[theorem]{Lemma}
\newtheorem{cor}[theorem]{Corollary}
\theoremstyle{remark}
\newtheorem{definition}{Definition}[section] 
\newtheorem{remark}[definition]{Remark}
\newtheorem{example}{Example}[section]
\begin{document}

\title{A family of partitions of the set \\of walks on a directed graph}
\author{Simon Thwaite}
     \address{Faculty of Physics, Ludwig Maximilian University of Munich, Theresienstrasse 37, 80333 Munich, Germany}
\email{simon.thwaite@physik.uni-muenchen.de}

\begin{abstract}
We present a family of partitions of $W_\G$, the set of walks on an arbitrary directed graph $\G$. Each partition in this family is identified by an integer sequence $\dsig$, which specifies a collection of cycles on $\G$ with a certain well-defined structure. We term such cycles resummable, and a walk that does not traverse any such cycles $\dsig$-irreducible. For a given value of $\dsig$, the corresponding partition of $W_\G$ consists of a collection of cells that each contain a single $\dsig$-irreducible walk $i$ plus all walks that can be formed from $i$ by attaching one or more resummable cycles to its vertices. Each cell of the partition is thus an equivalence class containing walks that can be transformed into one another by attaching or removing a collection of resummable cycles.

We characterise the entire family of partitions of $W_\G$ by giving explicit expressions for the structure of the $\dsig$-irreducible walks and the resummable cycles for arbitrary values of $\dsig$. We demonstrate how these results can be exploited to recast the sum over all walks on a directed graph as a sum over dressed $\dsig$-irreducible walks, and discuss the applications of this reformulation to matrix computations.
\end{abstract}
\maketitle

\section{Introduction}\label{sec:Introduction}
     Graphs are fundamental mathematical structures that find applications in virtually every discipline. Given a particular graph, a natural object of study is the family of walks that it supports. In addition to their intrinsic mathematical interest \cite{Lawler2010}, walks on graphs find applications in a wide range of fields, including biology \cite{Berg1993}, quantum physics \cite{Kempe2003,Venegas-Andraca2012}, and statistical inference \cite{Malioutov2006,Chandrasekaran2008}. In many applications, not only the number but also the structure of the walks on a graph is of interest: for example, self-avoiding walks and their variants play an important role in statistical mechanics and polymer science \cite{Madras1996}. A comprehensive understanding of the structure of walks on a graph, and how this depends on the connectivity structure of the graph itself, is thus an interesting topic of research with many possible applications.

     One recent example of such an application in the field of applied mathematics is the method of path-sums: a technique for evaluating matrix functions via resummations of walks on weighted directed graphs \cite{Giscard2013}. The method of path-sums is based around two main ideas. Firstly, by exploiting a mapping between matrix multiplication and walks on a weighted directed graph $\G$, the Taylor series for a given matrix function such as the matrix exponential or matrix inverse is recast as a sum over all walks on $\G$. Secondly, by identifying families of cycles (that is, walks that start and finish on the same vertex, but do not revisit that vertex in between) that repeatedly appear in the set of walks, certain infinite geometric series appearing in this sum are exactly resummed into closed form. The effect is to reduce the sum over all walks to a sum over a certain subset of `irreducible' walks, each of which is `dressed' so as to include contributions from the families of resummed cycles. This produces a new series for the original matrix function, which can be interpreted as the result of exactly resumming various infinite geometric series appearing within the Taylor series for the function.

     The exact form of the irreducible walks and the way in which they are dressed are determined by the families of cycles that are resummed. The method of path-sums as presented in \cite{Giscard2013} introduced two examples. The first involves resumming all self-loops (i.e.~all edges that connect a vertex to itself). In this case the irreducible walks are those that do not contain any self-loops, and the dressing procedure amounts to adding all possible combinations of self-loops to a loopless walk. Applying this result to the computation of the matrix exponential reproduces the Dyson series, leading to the interesting conclusion that the Dyson series can be interpreted as the Taylor series with all terms corresponding to self-loops having been resummed. The second example involves resumming all possible cycles. In this case the remaining summation runs over simple paths on $\G$, where a simple path (also known as a self-avoiding walk) is a walk that is forbidden from visiting any vertex more than once. Each simple path in this sum is dressed by all possible combinations of cycles off the vertices it visits. The corresponding expression for the matrix function takes the form of a finite sum of finite continued fractions, known as the path-sum expression for the function \cite{Giscard2013}.

\begin{figure}
\centering
\includegraphics[width=\textwidth]{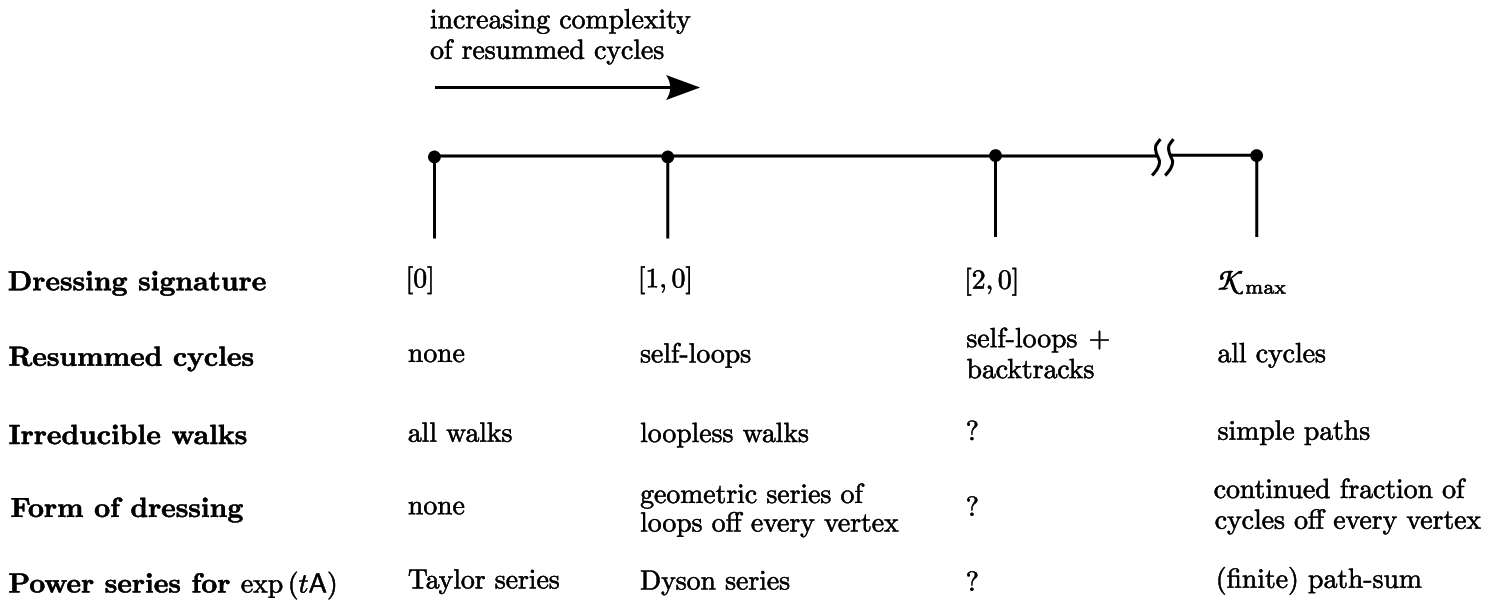}
       \caption{The hierarchy of cycle resummation schemes discussed in the text. Each resummation scheme is indexed by a dressing signature $\dsig$, which identifies the structure of the cycles that are exactly resummed within that dressing scheme. Associated with each resummation scheme is a set of irreducible walks and a dressing scheme, which are defined such that a sum over the irreducible walks, each dressed appropriately dressed by resummable cycles, reproduces the sum over all walks on $\G$. Obtaining explicit expressions for the irreducible walks and dressing scheme for an arbitrary dressing signature is the main goal of this article. The maximum dressing signature $\dsig_{\,\mathrm{max}}$ depends on the size and connectivity structure of the underlying digraph, and is discussed further in Section \ref{sec:DressingSignatures}.}
\label{fig:ResummationSpectrum}
\end{figure}

     Many other possible resummations exist besides these two examples. Indeed, we can imagine a hierarchy of resummations, ordered by increasing complexity of the cycles that are exactly resummed (see Figure \ref{fig:ResummationSpectrum}). Each member of this hierarchy is indexed by a \key{dressing signature}: a sequence $\dsig$ of non-negative integers that describes the structure of the cycles that are exactly resummed within that scheme. The lowest non-trivial member of this hierarchy corresponds to the resummation of self-loops ($\dsig = [1,0]$), while the highest member corresponds to the resummation of all possible cycles. Moving up the hierarchy corresponds to progressively including longer and more complex cycles in the resummation: for example backtracks (i.e.~cycles of the form $\alpha\rightarrow \beta\rightarrow \alpha$, corresponding to $\dsig = [2,0]$), triangles ($\dsig = [3,0]$), triangles with self-loops off their internal vertices ($\dsig = [3,1,0]$), etc. Associated with the resummation scheme indexed by a given dressing signature $\dsig$ is a family of $\dsig$-irreducible walks (i.e.~walks that do not contain any resummable cycles) and a dressing scheme. These are defined such that a sum over the $\dsig$-irreducible walks -- each dressed by combinations of resummable cycles as prescribed by the dressing scheme -- reproduces the sum over all walks without any over- or under-counting.

     When combined with the mapping between matrix functions and walks on graphs, this hierarchy of cycle resummations induces a family of series for a given matrix function. Each series in this family has the form of a sum over irreducible walks on the underlying graph, and corresponds to the Taylor series with certain infinite geometric series of terms having been exactly resummed. Apart from the Dyson series, which corresponds to the resummation of self-loops, the partially resummed series remain unknown. Their form, convergence behaviour, and mathematical properties are therefore interesting open questions with possible applications to matrix computations.

     In order to systematically study this hierarchy of resummations, a comprehensive understanding of the form of the irreducible walks and dressing scheme for a given dressing signature is required. Motivated by this goal, we study in this article the structure of the set of walks on an arbitrary directed graph $\G$ with respect to nesting, a product operation between walks that provides for unique factorisation \cite{Giscard2012}. We present two main results. We show firstly that for any dressing signature $\dsig$, which may be freely chosen, the family of walk sets produced by taking each $\dsig$-irreducible walk and dressing it by all combinations of resummable cycles form a partition of the set of all walks. This proves that the decomposition of the set of all walks into sets of dressed $\dsig$-irreducible walks does not lead to any walk being double-counted or missed out. Secondly, we present explicit expressions for the set of $\dsig$-irreducible walks and the dressing scheme for an arbitrary dressing signature $\dsig$, thus allowing the sum over all walks on $\G$ to be reformulated as a sum over dressed $\dsig$-irreducible walks.

\subsection{Article summary}

     The article is structured as follows. In Section \ref{sec:S2:RequiredConcepts} we summarize the required background material. In addition to standard concepts from graph theory -- in which case our treatment serves to establish our terminology and notation -- this section includes a definition of the nesting product (\S\ref{sec:NestingProduct}) and a discussion of a scheme for canonically representing walks on a graph via a structure we term a \key{syntax tree} (\S\ref{sec:SyntaxTree}).

     In Section \ref{sec:S3:FamilyOfPartitions} we develop our main results with a bare minimum of technical details, with a view to providing an overview for readers who wish to avoid being swamped by
     minutiae on a first reading. We begin in \S\ref{sec:DressingSignatures} by formalising the concept of a dressing signature. This paves the way for the introduction of the two operators which are of central importance to our approach: the \key{walk reduction operator} $\Reduc{\dsig}$, which projects any walk to its $\dsig$-irreducible core walk by removing all resummable cycles from it, and the \key{walk dressing operator} $\Dress{\dsig}{\G}$, which maps a $\dsig$-irreducible walk $i$ to the set of walks formed by adding all possible configurations of resummable cycles on $\G$ to $i$. In Section \ref{sec:32:FamilyOfPartitions} we state the required properties of $\Reduc{\dsig}$ and $\Dress{\dsig}{\G}$, and show that the family of walk sets produced by applying $\Dress{\dsig}{\G}$ to each of the $\dsig$-irreducible walks forms a partition of the set of all walks. In Section \ref{sec:EquivRelation} we discuss the equivalence relation associated with this partition.

     Section \ref{sec:S4:WalkReductionWalkDressing} contains the main technical content of this article. This section has two main aims: firstly, to present explicit definitions for the operators $\Reduc{\dsig}$ and $\Dress{\dsig}{\G}$ (found in \S\ref{sec:WalkReductionOperators} and \S\ref{sec:WalkDressingOperator} respectively) and secondly, to prove that these definitions satisfy the properties assumed of them in Section \ref{sec:S3:FamilyOfPartitions} (shown in \S\ref{sec:WalkReductionOperators} and \S\ref{subsec:RDeltaInverses}). A further key result is the explicit expression for the set of $\dsig$-irreducible walks for an arbitrary dressing signature $\dsig$, which we provide in \S\ref{subsec:StructureOfIK}.

     In Section \ref{sec:S5:DressedWalkSums} we present an application of the partitioning of the set of all walks into a collection of sets of dressed $\dsig$-irreducible walks. In \S\ref{sec:51:ResummedCharSeries} we show how this result can be exploited to recast the sum over all walks as a sum over dressed $\dsig$-irreducible walks for an arbitrary dressing signature $\dsig$. In \S\ref{sec:52:ResummedWeightSeries} we extend this result to the sum of all walk weights on a weighted digraph, and discuss the applications of this result to matrix computations.

     We conclude in Section \ref{sec:S6:Conclusion}. 
\section{Required concepts}\label{sec:S2:RequiredConcepts}
     This section provides an overview of the material required to develop the main results of this article. Some of the material included in this section is common knowledge in graph theory, in which case we follow standard references such as \cite{Biggs1993, Diestel2010}.
\subsection{Graphs}
     A directed graph or digraph $\G$ is a pair of sets $\left(\V,\E\right)$ such that the elements of the edge set $\E$ (typically referred to as directed edges or arrows) are ordered pairs of elements of the vertex set $\V$. A digraph is infinite if it has infinitely many edges, vertices, or both, or finite otherwise. An edge connecting a vertex to itself will be called a loop, and two or more edges that connect the same vertices in the same direction are called multiple edges. Throughout this article we consider finite directed graphs that may contain loops, but not multiple edges. We denote the vertices of $\G$ by Greek letters $\alpha,\beta,\ldots$, and an edge that leads from vertex $\mu$ to vertex $\nu$ by $(\mu\nu)$. We denote the digraph obtained by deleting from $\G$ the vertices $\alpha,\beta,\ldots,\omega$ and all edges incident on these vertices by $\G\backslash\alpha\beta\cdots\omega$.\\
\subsection{Walks, paths, and cycles}\label{subsec:WalksPathsCycles}
     \subsubsection*{Walks.} A walk $w$ of length $n\ge 1$ from $\alpha$ to $\omega$ on $\G$ is a sequence $(\alpha\mu_2), (\mu_2\mu_3), \ldots , (\mu_n\omega)$ of $n$ contiguous edges. If it happens that $\omega = \alpha$ then $w$ is a closed walk; walks that are not closed are open. We represent $w$ either by its edge sequence $(\alpha\mu_2)(\mu_2\mu_3)\cdots(\mu_n\omega)$ or by its vertex string $\alpha\mu_2\cdots\mu_{n}\omega$. The head and tail vertices of $w$ are $h(w) = \alpha\equiv \mu_1$ and $t(w) = \omega \equiv \mu_{n+1}$, respectively; the remaining vertices $\mu_2,\ldots,\mu_{n}$ are the \key{internal vertices} of $w$. The set of all walks from $\alpha$ to $\omega$ on $\G$ will be denoted by $W_{\G;\alpha\omega}$, while the set of all walks on $\G$ will be denoted by $W_\G = \cup_{\alpha,\omega} W_{\G;\alpha\omega}$.\\

     In addition to walks of positive length, we define for every vertex $\alpha$ of $\G$ a trivial walk of length zero, which we denote by $(\alpha)$. Then $h\left((\alpha)\right) = t\left((\alpha)\right) = \alpha$, so $(\alpha)$ is a closed walk off the vertex $\alpha$, and is thus a member of $W_{\G;\alpha\alpha}$. The set of all trivial walks on $\G$ will be denoted by $T_\G$.\\

     Finally, we define a zero walk $0$, which has undefined length and satisfies $h(0) = t(0) = 0$. The zero walk is a member of neither $W_\G$ nor $T_\G$.

\subsubsection*{Paths.}
     A \key{simple path} is a (possibly trivial) walk whose vertices are all distinct. The set of all simple paths from $\alpha$ to $\omega$ on $\G$ will be denoted by $P_{\G;\alpha\omega}$, and the set of all simple paths on $\G$ by $P_\G = \cup_{\alpha,\omega} P_{\G;\alpha\omega}$. When $\G$ is finite, both of these sets are finite. Note that $P_{\G;\alpha\alpha}$ is a set with a single member: the trivial walk $(\alpha)$.

\subsubsection*{Cycles.}
     A \key{cycle} off a vertex $\alpha$ is a non-trivial walk that starts and finishes on $\alpha$, but does not revisit $\alpha$ in between. Every cycle is a closed walk, but -- since a closed walk off $\alpha$ may revisit $\alpha$ any number of times -- not every closed walk is a cycle. The set of all cycles off $\alpha$ on $\G$ will be denoted by $C_{\G;\alpha}$, and the set of all cycles on $\G$ by $C_\G = \cup_\alpha C_{\G;\alpha}$. These sets are typically infinite. A cycle that visits each of its internal vertices only once will be termed a simple cycle. A simple cycle is a loop if it is of length 1, a backtrack if it is of length 2, or a triangle, square, etc.~if it is of length $3,4,\ldots$. (Simple cycles are thus also known as self-avoiding polygons.) The set of all simple cycles off a given vertex $\alpha$ on $\G$ will be denoted by $S_{\G;\alpha}$ and the set of all simple cycles on $\G$ by $S_\G = \cup_\alpha S_{\G;\alpha}$. Further, the set of all simple cycles off $\alpha$ with length $L$ will be denoted by $S_{\G;\alpha;L}$. These sets are finite when $\G$ is finite. Since we take the convention that the trivial walk $(\alpha)$ is not a cycle, neither $C_\G$ nor $S_\G$ contain any trivial walks.

\subsection{The nesting product}\label{sec:NestingProduct}
     In order to combine shorter walks into longer ones and decompose longer walks into shorter ones, it is useful to define a product operation between walks. A key guiding principle in constructing a product operation is that the decomposition of a given walk under the product should be unique. In this section we introduce nesting, a product operation between walks which provides for unique factorisation \cite{Giscard2012}.
     \begin{definition}[Nestable couples]\label{defn:NestableCouples} Let $w_1$ and $w_2$ be walks on a digraph $\G$, with vertex sequences $w_1 = \alpha_1 \alpha_2 \cdots \alpha_{m+1}$ and $w_2 = \beta_1 \beta_2\cdots \beta_{n+1}$ respectively. Then the pair $(w_1, w_2)$ is \key{nestable} if and only if both of the following conditions are true:
\begin{enumerate}
\item[(i)] $w_2$ is closed (i.e.~$\beta_{n+1} = \beta_1$),
     \item[(ii)] $w_1$ visits $\beta_1$ at least once, and $\{\alpha_1,\alpha_2,\ldots,\alpha_{j-1} \} \cap \{\beta_1,\beta_2,\ldots,\beta_{n}\} = \emptyset$, where $1\le j\le m+1$ is the index of the first appearance of $\beta_1$ in $w_1$. In other words, no vertex that $w_1$ visits before reaching $\beta_1$ for the first time is also visited by $w_2$.
\end{enumerate}
Note that we do not exclude the possibility that one or both of $w_1$ and $w_2$ may be trivial.
\end{definition}
\begin{remark}
     Let $(w_1, w_2)$ be a nestable pair of walks, with vertex sequences $w_1 = \alpha_1 \alpha_2 \cdots \alpha_{m+1}$ and $w_2 = \beta_1 \beta_2\cdots \beta_{1}$. Let $j$ be the index of the first appearance of $\beta_1$ in $w_1$. It follows from condition (ii) of Definition \ref{defn:NestableCouples} that $w_2$ is a walk on $\G \backslash \alpha_1\cdots\alpha_{j-1}$.
\end{remark}

\begin{definition}[Nesting product]\label{defn:NestingProduct}
     Let $w_1$ and $w_2$ be two walks on a digraph $\G$. If the couple $(w_1,w_2)$ is not nestable, we define the nesting product $w_1\nest w_2 = 0$, where $0$ is the zero walk. Otherwise, let $w_1$ and $w_2$ have vertex sequences $w_1 = \alpha_1 \alpha_2 \cdots \alpha_{m+1}$ and $w_2 = \beta_1 \beta_2 \cdots \beta_{n}\beta_1$, and let $k$ be the index of the final appearance of $\beta_1$ in $w_1$. Then we define the nesting product to be
\begin{align*}
w_1 \nest w_2 = \alpha_1\alpha_2\cdots\alpha_{k-1}\beta_1\beta_2\cdots\beta_{n}\beta_1
\alpha_{k+1}\cdots\alpha_{m+1}.
\end{align*}
     The walk $w_1\nest w_2$ has length $m + n$ and is said to consist of $w_2$ nested into $w_1$ (off vertex $\beta_1$). The vertex sequence of $w_1 \nest w_2$ is formed by replacing the final appearance of $\beta_1$ in $w_1$ by the entire vertex sequence of $w_2$.
\end{definition}

\begin{figure}
  \includegraphics[scale=1.15]{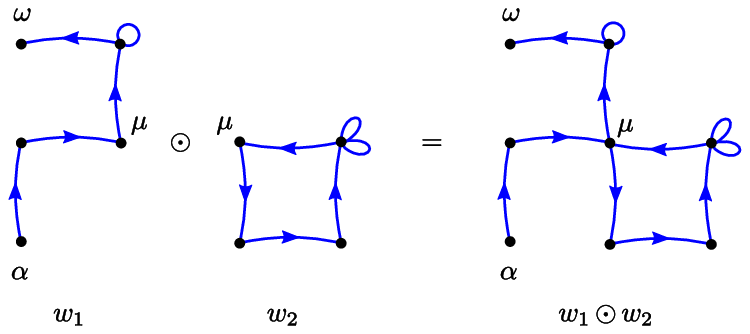}\hfill
  \includegraphics[scale=1.15]{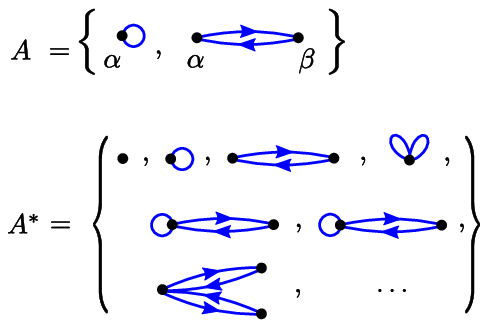}
  \caption{(Left) An example of nesting: $w_1$ is a walk from $\alpha$ to $\omega$, while $w_2$ is a closed walk off $\mu$, one of the vertices visited by $w_1$. Then $w_1 \nest w_2$ is the walk obtained by attaching $w_2$ to $\mu$. (Right) A graphical representation of the Kleene star of the set $A = \{\alpha\alpha,\alpha\beta\alpha\}$. The set $A^*$ contains all walks that can be formed by concatenating zero or more copies of $\alpha\alpha$ and $\alpha\beta\alpha$ together. The bare vertex represents the trivial walk $(\alpha)$. Note that the two walks in the second row of $A^*$ differ in the order in which they traverse the loop and the backtrack: one represents the walk $\alpha\alpha\nest \alpha\beta\alpha = \alpha\alpha\beta\alpha$, the other $\alpha\beta\alpha \nest \alpha\alpha = \alpha\beta\alpha\alpha$.}\label{fig:NestingAndKleeneStar}
\end{figure}

\noindent An example of the nesting operation is given in Figure~\ref{fig:NestingAndKleeneStar}.\\

\noindent In general, the nesting operation is neither commutative nor associative: for example, $11\nest 131 = 1131$ while $131 \nest 11 = 1311$, and $\left(12\nest 242\right)\nest 11 = 11242$ while $12\nest\left(242\nest 11\right) = 0$. For convenience we declare the nesting operator to be (notationally) left-associative: thus $a\nest b \nest c$ means $ (a\nest b) \nest c$, and explicit parentheses are required to indicate $a\nest(b\nest c)$.

In the case that $w_1$ and $w_2$ are closed walks off the same vertex, then nesting is equivalent to concatenation: for example, $1211 \nest 131 = 121131$. It is thus natural to define nesting powers of a closed walk $w$ as
\begin{align}
w^n = \begin{cases}
h(w) & \text{if $n = 0$,}\\
w^{n-1} \nest w & \text{otherwise.}
\end{cases}
\end{align}
     We further introduce the following shorthand notation for the nesting product of a sequence of closed walks off the same vertex:
\begin{align}
\Nest_{j=1}^N\, w_j = w_1 \nest w_2 \nest \cdots \nest w_N,
\end{align}
with the empty product defined to be the trivial walk off the relevant vertex.

\subsection{Nesting sets of walks and the Kleene star}
We extend the nesting product from individual walks to sets of walks as follows.
\begin{definition}[Nesting sets of walks]\label{defn:NestingSets}
     Given two sets of walks $A$ and $B$, we define $A \nest B$ to be the direct product of $A$ and $B$ with respect to the nesting operation:
\begin{align}
A \nest B = \big\{ a \nest b : a \in A \text{ and } b \in B\big\}.
\end{align}
It follows from this definition that if either $A$ or $B$ is empty, then $A\nest B$ is also empty. Further, nesting is distributive over set union: that is, $A \nest \left(B \cup C\right) = \left(A \nest B\right) \cup \left(A \nest C\right)$.
\end{definition}
We adopt the convention that -- in the absence of parentheses in an expression -- the nesting operator has a higher precedence than set union. The expression $A \nest B \, \cup \,C$ is thus to be interpreted as $(A \nest B) \,\cup \,C$, and explicit parentheses are required to indicate $A \nest\, (B\, \cup\, C)$.\\

Since nesting between closed walks off the same vertex is equivalent to concatenation, a set of closed walks may be nested with itself. This allows powers of a set of closed walks to be defined.

\begin{definition}[Nesting power of a set of walks]\label{defn:NestingPower}
     For a (possibly empty) set $A_{\alpha}$ of closed walks off a vertex $\alpha$, we define $A_{\alpha}^0 = \big\{(\alpha)\big\}$ and $A_{\alpha}^n = A_{\alpha}^{n-1} \nest A_{\alpha}$ for $n = 1,2,\ldots$. The set $A_{\alpha}^n$ contains all walks formed by nesting any $n$ elements of $A_{\alpha}$ together. Note that it follows from Definition \ref{defn:NestingSets} that if $A_\alpha = \emptyset$, then
\begin{align}
A_\alpha^n = \begin{cases}
\big\{(\alpha)\big\} & n = 0, \\
\emptyset & n \ge 1
\end{cases}
\end{align}
in analogy with a common convention for powers of the number 0.
\end{definition}

\begin{definition}[Kleene star of a set of walks \cite{Ebbinghaus1994}]\label{defn:KleeneStar}
     For a set $A_{\alpha}$ of closed walks off a vertex $\alpha$, the Kleene star of $A_{\alpha}$, denoted by $A_{\alpha}^*$, is the set of walks formed by nesting any number of elements of $A_{\alpha}$ together:
\begin{align}
A_{\alpha}^* = \bigcup_{n=0}^\infty A_{\alpha}^n.
\end{align}
It follows from the definition of nesting powers that if $A_{\alpha} = \emptyset$, then $A_{\alpha}^* = \big\{(\alpha)\big\}$.
\end{definition}
An example of the Kleene star operation on a set of cycles is given in Figure~\ref{fig:NestingAndKleeneStar}.

\subsection{Divisibility and prime walks}
     The nesting product of Definition \ref{defn:NestingProduct} induces a notion of divisibility, and with it a corresponding notion of prime walks.
\begin{definition}[Divisibility] Let $w$ and $w'$ be two walks. Then we say that $w'$ divides $w$, and write $w' \,|\, w$, if and only if one of the following conditions holds:
\begin{enumerate}
\item[(i)] there exist $n \ge 0$ walks $w_1, w_2, \ldots, w_n$ and an integer $0 \le i \le n$ such that $w = w_1 \nest\cdots \nest w_i \nest w' \nest w_{i+1} \nest \cdots \nest w_n$; or
\item[(ii)] there exists a walk $w''$ such that $w'\, |\, w''$ and $w'' \,| \,w$.
\end{enumerate}
A walk $w'$ that divides $w$ will be called a factor or divisor of $w$.
\end{definition}

\begin{definition}[Prime walks]\label{defn:PrimeWalks}
A walk $w$ is said to be prime with respect to the nesting operation if and only if for all nestable couples $(w',w'')$ such that $w \,| \,(w' \nest w'')$, then $w \,| \,w'$ or $w \,| \,w''$.
\end{definition}
As shown in \cite{Giscard2012}, the prime walks on a digraph $\G$ are precisely the simple paths and simple cycles of $\G$, of which there are a finite number if $\G$ is finite.

\subsection{The prime factorisation of a walk}\label{sec:PrimeFactorisation}
     The nesting operation permits the factorisation or decomposition of a walk $w$ on a digraph into two or more shorter walks, which, when nested together in the correct order, reproduce the original walk $w$. However, this factorisation is generally not unique: even very short walks typically admit multiple different factorisations. A simple example is the walk $12122$, which can be factorised either as $122 \nest 121$, or $1212 \nest 22$, or $12 \nest 22 \nest 121$.

     Of key interest among the factorisations of a given walk $w$ is the one that contains only prime walks. This factorisation, which we term the prime factorisation of $w$, corresponds to the decomposition of $w$ into a simple path plus a collection of simple cycles. Every walk admits a prime factorisation; further, this prime factorisation is unique \cite{Giscard2012}.

     An intuitive way of visualising the prime factorisation of $w$ is as follows. Consider traversing $w$ from start to finish, pausing at each vertex to check whether or not it has been visited previously. Upon arriving at a vertex $\mu$ that has previously been visited, remove from $w$ the sequence of edges traversed since the most recent encounter with $\mu$, and set this sequence aside. Proceed in this fashion until the end of $w$ is reached. At this point, the surviving vertex string contains no repeated vertices, and thus defines a simple path. Further, each of the extracted sections of $w$ contains no repeated internal vertices, and so defines a simple cycle off the relevant vertex $\mu$.\footnote{Viewed in this fashion, the process of prime factorisation corresponds closely to the `loop-erasure procedure' introduced by Lawler \cite{Lawler1980,Lawler1999} (note that his `loop' is a `closed walk' in our terminology). The underlying simple path of a walk $w$ is the `loop-erased random walk' $L(w)$ of that author. Obtaining the prime factorisation of $w$ via the loop-erasure procedure involves the trivial extra step of keeping track of each of the closed walks that are erased, and applying the loop-erasure procedure recursively to each closed walk until a collection of simple cycles is obtained.}

     The prime factorisation theorem for walks on a digraph can be summarised in the following Theorem, which we state without proof. For further details we refer to \cite{Giscard2012}.

\begin{theorem}[The prime factorisation theorem for walks on digraphs]\label{thm:PrimeFactorisationTheorem}
The set of all walks from $\alpha$ to $\omega$ on a digraph $\G$ can be written as
\begin{align*}
W_{\G;\alpha\omega} = P_{\G;\alpha\omega}
\nest C^{\,*}_{\G\backslash\alpha\cdots\mu_\ell;\omega}
\nest \cdots
\nest C^{\,*}_{\G\backslash\alpha;\mu_2}
\nest C^{\,*}_{\G;\alpha},
\end{align*}
     where $\ell$ is the length of the simple path $\alpha\mu_2\cdots\mu_\ell\omega \in P_{\G;\alpha\omega}$, and
\begin{align*}
C_{\G;\alpha} =
S_{\G;\alpha} \nest
       C^{\,*}_{\G\backslash\alpha\cdots\eta_{m-1};\eta_m}\nest \cdots \nest   C^{\,*}_{\G\backslash\alpha;\eta_2},
\end{align*}
     where $m$ is the length of the simple cycle $\alpha\eta_2\cdots\eta_m\alpha \in S_{\G;\alpha}$.
\end{theorem}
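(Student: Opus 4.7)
The plan is to prove the two decomposition identities in tandem: the identity for $W_{\G;\alpha\omega}$ via Lawler's loop-erasure procedure sketched in \S\ref{sec:PrimeFactorisation}, and the identity for $C_{\G;\alpha}$ by reduction to the walk identity on the smaller digraph $\G\backslash\alpha$. For the $(\supseteq)$ inclusion of the walk identity, I would fix a simple path $p = \alpha\mu_2\cdots\mu_\ell\omega \in P_{\G;\alpha\omega}$ together with closed walks $c_i \in C^{\,*}_{\G\backslash\alpha\cdots\mu_{i-1};\mu_i}$ for each $i$, and check that the left-associative nesting $p \nest c_\omega \nest c_{\mu_\ell} \nest \cdots \nest c_{\mu_2} \nest c_\alpha$ is well-defined and yields an element of $W_{\G;\alpha\omega}$. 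At each stage condition (i) of Definition \ref{defn:NestableCouples} is immediate since every $c_i$ is closed, while condition (ii) follows from the subgraph restriction on $c_i$: the vertices $\alpha,\mu_2,\ldots,\mu_{i-1}$ precede $\mu_i$ in the current vertex string but are avoided by $c_i$ entirely, so the disjointness requirement is satisfied.

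For the $(\subseteq)$ inclusion, given $w \in W_{\G;\alpha\omega}$, I would apply Lawler's loop-erasure procedure: traverse $w$ from $\alpha$ to $\omega$ maintaining a running loop-erased prefix, and each time a previously-seen vertex is re-encountered, excise the intervening closed walk and record it together with its base vertex. At termination the prefix is a simple path $p = \alpha\mu_2\cdots\mu_\ell\omega \in P_{\G;\alpha\omega}$, and each vertex of $p$ becomes \emph{committed} in the sense of never being erased again. Grouping the excised closed walks by their base vertex and nesting them in the order of excision yields, for each $i$, a closed walk $c_i$ off $\mu_i$. The key claim is that $c_i \in C^{\,*}_{\G\backslash\alpha\cdots\mu_{i-1};\mu_i}$: any closed walk excised at $\mu_i$ must avoid $\alpha,\mu_2,\ldots,\mu_{i-1}$, since visiting any such $\mu_j$ would have caused the procedure to erase all the way back to $\mu_j$, contradicting its committed status. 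Reconstructing $w$ via the prescribed nesting order then exhibits $w$ as an element of the right-hand side.

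The cycle identity I would handle by reduction to the walk identity on a smaller digraph. Given $c = \alpha\nu_2\cdots\nu_n\alpha \in C_{\G;\alpha}$ with $\nu_i \ne \alpha$ for all interior $i$, the segment $\nu_2\cdots\nu_n$ is a walk from $\nu_2$ to $\nu_n$ on $\G\backslash\alpha$. Applying the walk identity to this interior segment on $\G\backslash\alpha$ decomposes it as a simple path $\nu_2\eta_3\cdots\eta_{m-1}\nu_n \in P_{\G\backslash\alpha;\nu_2\nu_n}$ nested with cycle-collections at each of its vertices in the stated subgraphs. Re-attaching the initial edge $\alpha\to\nu_2$ and the final edge $\nu_n\to\alpha$ converts the simple path into the simple cycle $\alpha\nu_2\eta_3\cdots\eta_{m-1}\nu_n\alpha \in S_{\G;\alpha}$, with $\eta_2 = \nu_2$ and $\eta_m = \nu_n$, while the cycle-collections at the internal vertices remain in their respective sets; this is precisely the claimed decomposition.

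The main obstacle is the bookkeeping in the $(\subseteq)$ direction of the walk identity: one must verify both that every closed walk extracted at $\mu_i$ lies in $C^{\,*}_{\G\backslash\alpha\cdots\mu_{i-1};\mu_i}$ and that the order of extraction is compatible with left-associative nesting, so that the combined product indeed lies in this set rather than in some strictly larger set. Both facts rest on the monotone growth of the committed prefix during loop erasure, but the ordering argument in particular requires care because nesting is not associative in general. Uniqueness of the decomposition, though not explicitly part of the stated equality, is established in \cite{Giscard2012} and will be essential for the later partition results of the paper.
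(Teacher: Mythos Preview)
The paper does not prove this theorem: it is explicitly stated without proof, with the reader referred to \cite{Giscard2012} for details. There is therefore no proof in the paper to compare your proposal against.

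That said, your outline is well aligned with the paper's informal description. The paragraph and footnote preceding the theorem already identify the prime factorisation procedure with Lawler's loop-erasure, which is exactly the mechanism you invoke for the $(\subseteq)$ direction of the walk identity. Your reduction of the cycle identity to the walk identity on $\G\backslash\alpha$ is a clean way to handle the second assertion and avoids duplicating the loop-erasure argument. The bookkeeping concerns you flag at the end --- that the excised closed walks at $\mu_i$ genuinely lie in $C^{\,*}_{\G\backslash\alpha\cdots\mu_{i-1};\mu_i}$, and that the extraction order is compatible with the left-associative nesting convention of Definition~\ref{defn:NestingProduct} --- are precisely the points that would need to be made rigorous, and your ``committed prefix'' observation is the right lever for both.
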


     Theorem \ref{thm:PrimeFactorisationTheorem} makes two statements. The first is that every walk can be uniquely decomposed into a (possibly trivial) simple path $p$ plus a (possibly empty) collection of cycles nested off the vertices of $p$. In other words, every walk $w$ from $\alpha$ to $\omega$ can be uniquely written for some $\ell \ge 0$ and $N_1,N_2,\ldots,N_{\ell+1} \ge 0$ as
\begin{align}
w = \alpha\mu_2\cdots\mu_\ell\omega
\nest \left[\Nest_{j=1}^{N_{\ell+1}}c_{\ell+1,j}\right]
\nest\cdots
\nest \left[\Nest_{j=1}^{N_2}c_{2,j}\right]
\nest \left[\Nest_{j=1}^{N_1}c_{1,j}\right],\label{eqn:WalkPrimeFactorisation}
\end{align}
where $\alpha\mu_2\cdots\mu_\ell\omega\in P_{\G;\alpha\omega}$ is a simple path (which we term the \key{base path} of $w$) and $c_{i,j} \in C_{\G\backslash\alpha\cdots\mu_{i-1};\mu_i}$ for $1\le i \le \ell + 1$ and $1 \le j \le N_i$ is a cycle off $\mu_i$. We refer to the cycles $c_{i,j}$ as the cycles nested off $\mu_i$ or the \key{child cycles} of $\mu_i$.

The second statement is that every cycle can be uniquely decomposed into a simple cycle $s$ plus a (possibly empty) collection of cycles nested off the internal vertices of $s$. That is, every cycle $c$ off a given vertex $\alpha$ can be written for some $m \ge 1$ and $M_2,\ldots, M_m \ge 0$ as
\begin{align}
c = \alpha\eta_2\cdots\eta_m\alpha
\nest \left[\Nest_{j=1}^{M_m}c_{m,j}\right]
\nest\cdots
\nest\left[\Nest_{j=1}^{M_2}c_{2,j}\right],\label{eqn:CyclePrimeFactorisation}
\end{align}
where $\alpha\eta_2\cdots\eta_m\alpha \in S_{\G;\alpha}$ is a simple cycle (which we term the \key{base cycle} of $c$) and $c_{i,j} \in C_{\G\backslash\alpha\cdots\eta_{i-1};\eta_i}$ for $1\le i \le m$ and $1 \le j \le M_i$ is a cycle off $\eta_i$. We again refer to the cycles $c_{i,j}$ for $1 \le j \le N_i$ as the child cycles of $\eta_i$.\\

\noindent An example of the prime factorisation is given in Figure \ref{fig:PrimeFactorisation}.
\begin{figure}\label{fig:PrimeFactorisation}
\includegraphics[scale=1.20]{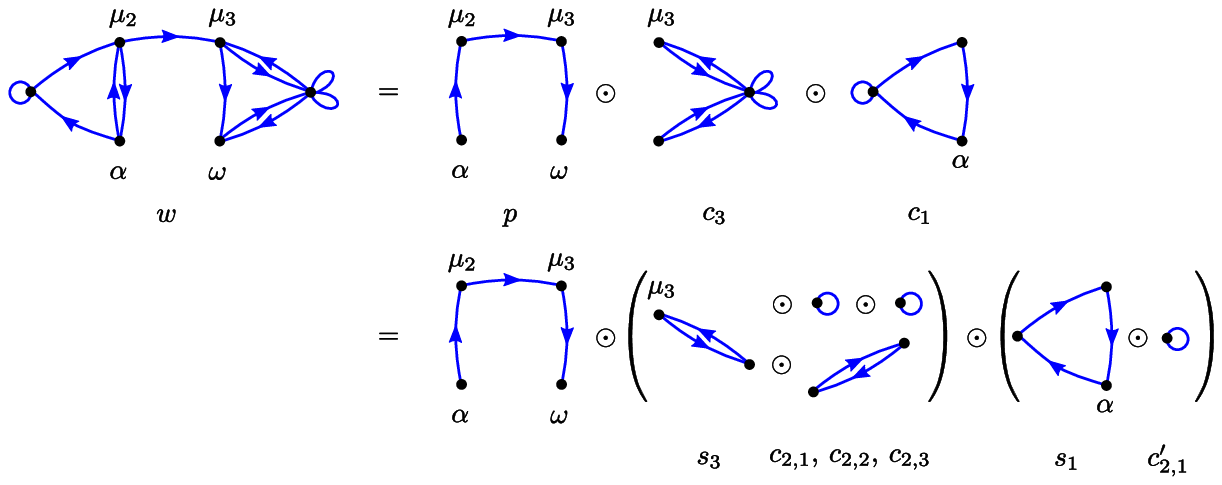}
  \caption{The process of factorising a walk into its prime factors. (Top) $w$ is factorised into a simple path $p$ plus a collection of cycles $c_i$ off the vertices of $p$ (cf.~Eq.~\eqref{eqn:WalkPrimeFactorisation}). The right-to-left ordering of the factors is a consequence of the nestable condition of Definition \ref{defn:NestableCouples}. (Bottom) Each cycle $c$ is then further factorised into a simple cycle $s$ plus a collection of cycles $c_{i,j}$ off the internal vertices of $s$ (cf.~Eq.~\eqref{eqn:CyclePrimeFactorisation}). }
\end{figure}

\subsection{The syntax tree of a walk}\label{sec:SyntaxTree}
     The factorisation of a walk $w$ into a collection of shorter walks via nesting is conveniently represented by a structure that we call a \key{syntax tree}. Given a particular factorisation of $w$, the corresponding syntax tree indicates both the factors and the order in which they must be nested together in order to reconstruct $w$. We begin by defining a generalised tree, which is an extension of the well-known tree data structure to include ordered sequences of child nodes instead of individual child nodes.

\begin{definition}[Generalised trees]
       A \key{generalised tree} \gentree{} consists of a root node \rootnode{T} plus a (possibly empty) ordered sequence of hedges, where a \key{hedge} is an ordered sequence of generalised trees:
\begin{align*}
       \gentree &= \big[\textsl{T.root}, [ \textsl{$H_{1}$}, \textsl{$H_{2}$}, \ldots, \textsl{$H_{n}$} ] \big], \\
       \hedge_i & = \big[\textsl{$T_{i,1}$},\textsl{$T_{i,2}$},\ldots, \textsl{$T_{i,k_i}$} \big].
\end{align*}
     Here $n \ge 0$ is the number of hedges proceeding from \rootnode{T}, and $k_i \ge 0$ is the number of generalised trees in the hedge $\hedge_i$. Adopting standard terminology, we say that each of the nodes \rootnode{$\gentree{}_{i,1}$},$\ldots$,\rootnode{$\gentree{}_{i,k_i}$} is a child node of $\rootnode{T}$, and has $\rootnode{T}$ as a parent. A node that has no children is a leaf node. Given a node $\textsl{n}$, nodes in the same hedge as $\textsl{n}$ are said to be siblings (specifically, right or left siblings, according to their position in the hedge relative to $\textsl{n}$).\\

     \noindent Every node in \gentree{} has a depth and a height: the depth of a node \textsl{n} is equal to the length of a path from \textsl{n} to \rootnode{T}, and the height of \textsl{n} is the length of the longest downward path from \textsl{n} to a leaf node. The height of the entire tree \gentree{} is defined to be the height of \textsl{T.root}.

     \noindent A subtree of $\gentree{}$ consists of a node $\textsl{n}$ and all its descendants (i.e.~all children, grandchildren, etc.) in $\gentree{}$. We denote the subtree rooted at node $\textsl{n}$ by $\textsl{t(n)}$: thus $t(\rootnode{\gentree{}}) = \gentree{}$, while for $\textsl{n} \neq \rootnode{\gentree{}}$, $t(\textsl{n})$ is a proper subtree of $\gentree{}$.

\end{definition}

\begin{definition}[Syntax trees]
     Given a digraph $\G$, a syntax tree over $\G$ is a generalised tree \gentree{} together with a collection of elements of $W_\G$, one associated with each node in \gentree{}. We denote the walk associated with node $\textsl{n}$ by $\textsl{n.contents}$, and define the contents of \gentree{} (as distinct from the contents of its root node) to be
\begin{subequations}
\begin{align*}
     \textsl{T.contents} = \textsl{T.root.contents} \nest \textsl{$H_{n}$.contents} \nest \cdots \nest \textsl{$H_{2}$.contents}
\nest \textsl{$H_{1}$.contents},
\end{align*}
where the contents of a hedge are given by
\begin{align*}
\textsl{$H_{i}$.contents} = \textsl{$T_{i,1}$.contents} \nest \textsl{$T_{i,2}$.contents}
\nest \cdots \nest \textsl{$T_{i,k_i}$.contents}.
\end{align*}
\end{subequations}
     The syntax tree \gentree{} is \key{valid} for a walk $w$ if and only if it satisfies the following conditions:
\begin{enumerate}
\item[(i)] $\textsl{T.contents} = w$,
       \item[(ii)] $h(H_i\textsl{.contents}) \neq h(H_j\textsl{.contents})$ for $ i \neq j$. In other words, no two hedges contain closed walks off the same vertex.
\end{enumerate}
     The purpose of the second condition is to disallow syntax trees where several closed walks off the same vertex are placed in different hedges, rather than into different generalised trees within the same hedge.
\end{definition}

\begin{figure}\label{fig:CanonicalSyntaxTrees}\centering
\includegraphics[scale=1.6]{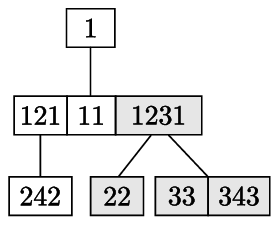}\hspace{2cm}
\includegraphics[scale=1.6]{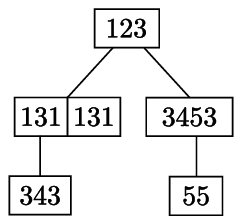}
\caption{Two examples of canonical syntax trees representing prime factorisations of walks. The root node contains a (possibly trivial) simple path, and every child node contains a simple cycle. (Left) the canonical syntax tree showing the factorisation of the closed walk $1242112233431 = 1 \nest (121 \nest 242) \nest 11 \nest (1231 \nest 33 \nest 343 \nest 22)$ into its prime factors. The subtree containing the cycle $12233431$ is shaded in grey. (Right) The canonical syntax tree of the walk $1343131234553 = 123 \nest (3453 \nest 55) \nest((131 \nest 343) \nest 131)$.}
\end{figure}

Each factorisation of a walk gives rise to a different syntax tree. Since many walks admit more than one factorisation, there are often multiple valid syntax trees for a given walk. Of particular interest among this family of syntax trees is the one that corresponds to the prime factorisation of $w$. We refer to this as the \key{canonical} syntax tree of $w$, or `the' syntax tree of $w$. If $\gentree{}$ is the canonical syntax tree for a walk $w$, then $\rootnode{\gentree{}}$ contains the base path of $w$ and every child node of $\gentree{}$ contains a simple cycle. Note that for every proper subtree $\textsl{t}$ of $\gentree{}$, $\textsl{t.contents}$ is a cycle. Figure \ref{fig:CanonicalSyntaxTrees} shows two examples of canonical syntax trees.

\begin{definition}[The canonical syntax tree of a walk]
     The \key{canonical syntax tree} of a walk $w$ on a digraph $\G$ is the unique valid syntax tree for $w$ that satisfies $\textsl{T.root.contents} \in P_\G$ and $\textsl{n.contents} \in S_\G$ for every child node $\textsl{n}$. The existence and uniqueness of this tree for any walk follow from the prime factorisation theorem discussed in Section \ref{sec:PrimeFactorisation}.
\end{definition} 
\section{A family of partitions of the walk set $W_{\G}$}\label{sec:S3:FamilyOfPartitions}
In this section we present the first main result of this article. We begin by introducing the concept of a \key{dressing signature}. A dressing signature $\dsig$ is a sequence of non-negative integers designed as a compact method of identifying a family of cycles by specifying the maximum length of their prime factors. Given a particular dressing signature and its associated family of cycles, a resummation scheme that exactly resums these cycles can be found. Thus, every dressing signature identifies a different resummation scheme; and every resummation scheme in the hierarchy depicted in Figure \ref{fig:ResummationSpectrum} corresponds to a different dressing signature. The dressing signature $[1,0]$ identifies a scheme that resums all loops, and $[2,0]$ to a scheme that resums both loops and backtracks (the trailing zero indicates that the backtracks have no cycles nested off their internal vertex). We term cycles that are exactly resummed within a given dressing scheme \key{resummable} cycles.

The main result of this section can then be stated as follows. For any dressing signature $\dsig$, the set of all walks on a digraph $\G$ can be partitioned into a collection of collectively exhaustive, mutually exclusive subsets. Each subset consists of a walk $i$ that does not contain any resummable cycles -- which we term a $\dsig$-irreducible walk -- plus all the walks that can be formed by nesting one or more resummable cycles into $i$.

The idea of the construction is straightforward. Given a dressing signature $\dsig$, we introduce two operators, which we term the walk reduction operator and the walk dressing operator for that dressing signature. We postpone giving the explicit form of these operators, defining them for now in terms of their actions. The walk reduction operator $\Reduc{\dsig} : W_\G \rightarrow W_\G$ is a projector that maps any walk $w$ to its $\dsig$-irreducible `core walk' by deleting all resummable cycles from it (in practice, this means mapping every resummable cycle $c$ to the corresponding trivial walk $(h(c))$, which is an identity element for the nesting operation). Conversely, the walk dressing operator $\Dress{\dsig}{\G}: \Reduc{\dsig}\big[W_\G\big] \rightarrow \powset{W_\G}$ maps a $\dsig$-irreducible walk $i$ to the set consisting of $i$ plus all walks that can be formed by nesting one or more resummable cycles off the vertices of $i$. Then the ensemble of walk sets obtained by applying $\Dress{\dsig}{\G}$ to each $\dsig$-irreducible walk on $\G$ forms a partition of $W_\G$. Figure \ref{fig:walk_set_partition} depicts the actions of $\Reduc{\dsig}$ and $\Dress{\dsig}{\G}$.

\begin{figure}\label{fig:walk_set_partition}
\centering
\includegraphics[scale=1.00]{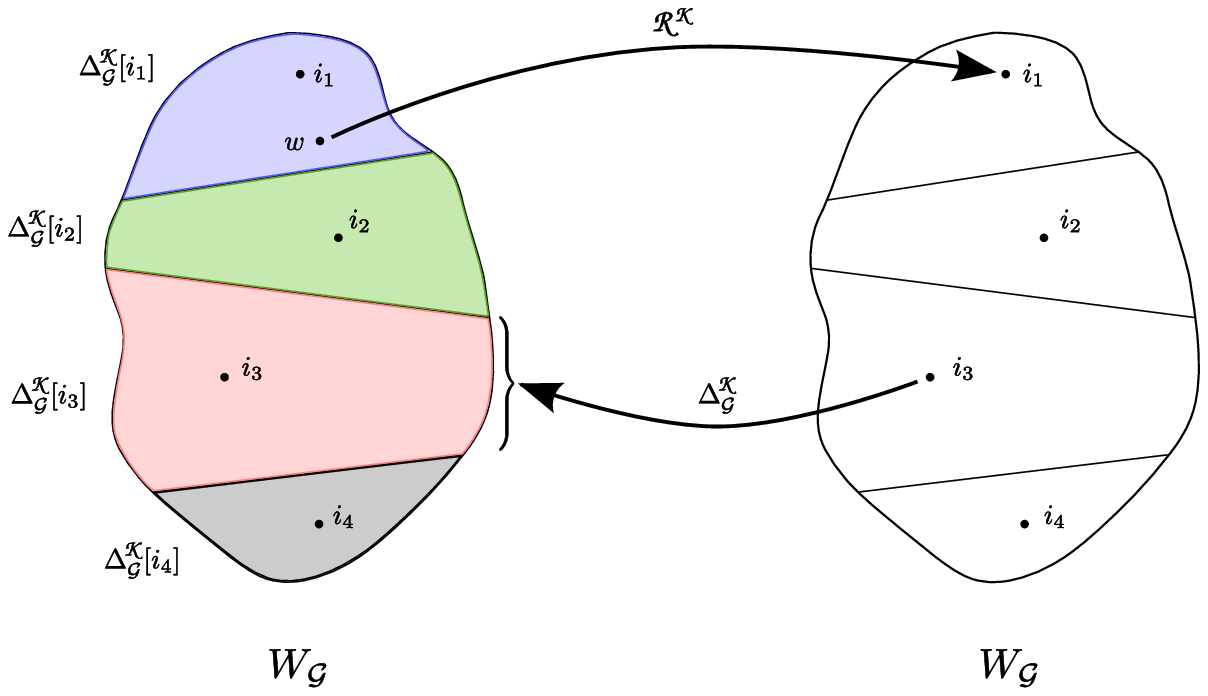}
  \caption{A schematic illustration of the actions of the walk reduction operator $\Reduc{\dsig}: W_\G \rightarrow W_\G$ and walk dressing operator $\Dress{\dsig}{\G}: \Reduc{\dsig}\big[W_\G\big]\rightarrow \powset{W_\G}$ for a particular dressing signature $\dsig$. The action of $\Reduc{\dsig}$ is to map any walk $w$ to its core $\dsig$-irreducible walk $i \in I^\dsig_\G$ by deleting all resummable cycles from $w$. Here the set of $\dsig$-irreducible walks is $I^{\dsig}_{\G} = \{i_1,i_2,i_3,i_4\}$. The walk dressing operator maps a $\dsig$-irreducible walk $i$ to the set of all walks that satisfy $\Reduc{\dsig}[w] = i$. The family of sets $\big\{\Dress{\dsig}{\G}[i] : i \in I^\dsig_\G \big \} = \big\{\Dress{\dsig}{\G}[i_1], \Dress{\dsig}{\G}[i_2], \Dress{\dsig}{\G}[i_3], \Dress{\dsig}{\G}[i_4]\big\}$ form a partition of $W_\G$. A different dressing signature $\dsig' \neq \dsig$ would induce a different set of irreducible walks and a different partition of $W_\G$.}
\end{figure}

As long as $\Reduc{\dsig}$ is idempotent and $\Dress{\dsig}{\G}$ acts as its inverse (in the sense of mapping a given $\dsig$-irreducible walk to the set of all walks that satisfy $\Reduc{\dsig}[w] = i$) this result can be proven independently of their explicit definitions. Therefore, in order to avoid getting sidetracked by the technical details of the definitions, in this section we simply state the properties required of $\Reduc{\dsig}$ and $\Dress{\dsig}{\G}$, assume that suitable definitions can be found, and show that it follows that the ensemble of walk sets described above partition $W_\G$. We justify this assumption in Section \ref{sec:S4:WalkReductionWalkDressing}, where we give explicit definitions for $\Reduc{\dsig}$ and $\Dress{\dsig}{\G}$ that satisfy the properties assumed here.

The remainder of the section is structured as follows. Section \S\ref{sec:DressingSignatures} is dedicated to defining and discussing dressing signatures. In \S\ref{sec:32:FamilyOfPartitions} we state the defining properties of the operators $\Reduc{\dsig}$ and $\Dress{\dsig}{\G}$, and show that the ensemble of walk sets described above form a partition of $W_\G$. In \S\ref{sec:EquivRelation} we discuss how $\Reduc{\dsig}$ can be interpreted as an equivalence relation on $W_\G$.

\subsection{Dressing signatures}\label{sec:DressingSignatures}
The syntax trees introduced in Section \ref{sec:SyntaxTree} form the basis for a method of classifying walks by the structure of their prime decompositions. When compared with a simple classification scheme based on comparing vertex sequences, such a syntax-tree-based taxonomy of walks has two main advantages. Firstly, it provides for a more precise description of walk structures than the elementary distinctions of closed vs.~open (walks) or simple vs.~compound (cycles). Secondly, it allows the structure of a walk to be discussed independently of the underlying graph. In particular, if the syntax tree of a given walk $w$ is modified so that only the length of the prime factor, rather than the prime factor itself, appears in each node, then the structure of $w$ can be precisely discussed without ever explicitly referring to the edges that it traverses. This abstraction is useful insofar as it allows a general definition of when two walks can be said to be equivalent -- even if they connect different vertices, traverse different edges, or exist on two different graphs.

The main goal of this article is to describe and generate irreducible walks for each of the resummation schemes depicted in Figure \ref{fig:ResummationSpectrum}. This goal requires a shorthand form of describing and identifying the families of cycles that are classed as `resummable' for each resummation scheme. To this end, we introduce a sequence $\dsig = [k_0,k_1,\ldots,k_D]$ of non-negative integers that we term a \key{dressing signature}. A dressing signature is a condensed way of prescribing the structure of a cycle $c$. The values of $k_i$ for $0\le i \le D$ set the maximum length of the prime factors (i.e.~the simple cycles) at depth $i+1$ in the syntax tree of $c$: thus the base cycle of $c$ has length less than or equal to $k_0$, its child cycles have length less than or equal to $k_1$, and so forth. In order to emphasize that the child cycles at depth $D-1$ have no children of their own, we fix the final entry of any dressing signature to be $k_D = 0$. Further, since only simple cycles of length 2 and greater can have child cycles, all elements of $\dsig$ apart from the last non-zero element must be at least 2. This leads to the following definition.

\begin{definition}[Dressing signatures]\label{defn:DressingSignatures}
     A dressing signature $\dsig = [k_0, k_1,\ldots,k_D]$ is a sequence of one or more integers such that $k_1,k_2,\ldots,k_{D-2} \ge 2$ , $k_{D-1} \ge 1$, and $k_D = 0$. The parameter $D \ge 0$ is the number of non-zero integers in $\dsig$ before the final zero, and will be referred to as the \key{depth} of $\dsig$.
\end{definition}


\begin{remark}
     Given a dressing signature $\dsig = [k_0, k_1,\ldots,0]$ with depth $D$, the sequences $[k_1,\ldots,k_{D-1},0]$, $\ldots$, $[k_{D-1},0], [0]$ formed by progressively deleting elements from the front of $\dsig$ are also dressing signatures.
\end{remark}

A cycle whose prime decomposition matches the form prescribed by a given dressing signature $\dsig$ is said to be $\dsig$-structured.

\begin{definition}[$\dsig$-structured cycles]\label{defn:KStructuredCycles}
     Let $\dsig = [k_0,k_1,\ldots,k_{D-1},0]$ be a dressing signature, and $\dsig' = [k_1,\ldots,k_{D-1},0]$ be the dressing signature obtained by deleting the first element of $\dsig$. Then a cycle $c$ is said to be $\dsig$-structured if and only if the base cycle of $c$ has length less than or equal to $k_0$, and all child cycles $c_{i,j}$ (if any exist) are $\dsig'$-structured. We denote the set of all $\dsig$-structured cycles off a given vertex $\alpha$ on a digraph $\G$ by $C_{\G;\alpha}^\dsig$, and the set of all $\dsig$-structured cycles on $\G$ by $C_\G^\dsig = \cup_\alpha \,C_{\G;\alpha}^\dsig$.
\end{definition}
     The set of $\dsig$-structured cycles is a subset of the set of all cycles: $C_\G^\dsig \subseteq C_\G$. We give an explicit expression for $C_\G^\dsig$ in Section \ref{sec:CycleReductionOperators} (see Definition \ref{defn:CkCycleSets}).

\begin{remark}
     Since every cycle has a length of at least 1, no cycle is $[0]$-structured. A loop is $\dsig$-structured for any value of $\dsig \neq [0]$. The cycles $121$ and $1231$ are both $[3,0]$-structured (indeed, they are $[k_0,0]$-structured for all $k_0 \ge 3$). The cycle $1221$ is $[2,1,0]$-structured, while $12234331 = 1231 \nest 343\nest 33\nest 22 $ is $[3,2,0]$-structured. Figure \ref{fig:k_structured_cycles} illustrates further examples of $[3,2,0]$-structured cycles that have the triangle 1231 as a base cycle.
\end{remark}

\begin{remark}
     The syntax tree \gentree{} of a $\dsig$-structured cycle $c$ has height at most $D$, where $D$ is the depth of $\dsig$. Since $c$ is a cycle, the root node $\rootnode{\gentree{}}$ contains the trivial simple path $(h(c))$. If $D \neq 0$, $\rootnode{\gentree{}}$ has a single child node, which contains a simple cycle of length no greater than $k_0$. If $D \ge 2$, this node may have one or more descendant nodes with depth $2 \le d \le D$, the contents of which satisfy $\len(\textsl{n.contents}) \le k_{d-1}$. Subject to this length restriction, there is no limit on the number of nodes with depth $2$ and deeper.
\end{remark}

\begin{figure}
  \centering
  \includegraphics[scale=1.2]{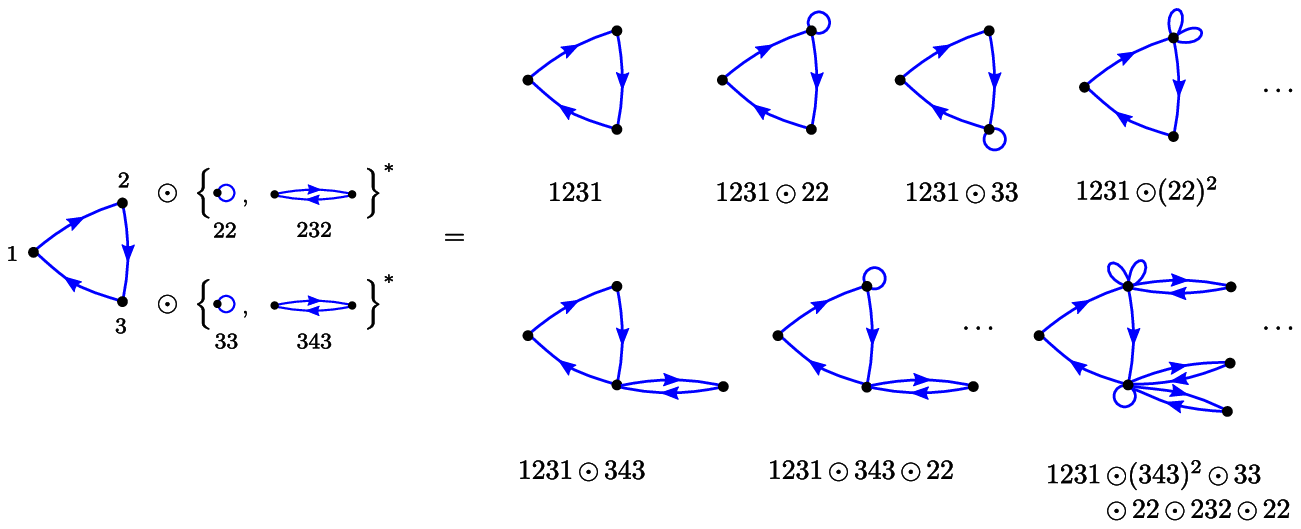}\hfill
  \caption{A set of $[3,2,0]$-structured cycles can be constructed from a simple cycle of length 3 (the triangle 1231, left) by nesting all possible configurations of $[2,0]$-structured cycles off each of its internal vertices (cf.~Definition \ref{defn:CkCycleSets} in \S\ref{sec:CycleReductionOperators}). The resultant set of cycles is infinite.}\label{fig:k_structured_cycles}
\end{figure}

\begin{remark}[The maximum dressing signature for a graph]
A natural order on dressing signatures is the quasi-lexicographic or shortlex order, in which shorter sequences precede longer ones and sequences of the same length are ordered lexicographically. The minimum dressing signature with respect to this order is $[0]$, and -- since dressing signatures can in principle be arbitrarily long -- there is no maximum.

However, for any finite graph $\G$ there is a maximum length to any simple cycle, and to the simple cycles that may be nested into that cycle, and so forth. There is therefore a maximum \key{useful} dressing signature for any finite graph, which we denote by $\dsig_{\,\text{max}} = \dsig_{\,\text{max}}(\G)$. This is defined to be the smallest dressing signature $\dsig$ such that every cycle on $\G$ is $\dsig$-structured. It follows that $C^{\dsig_{\,\text{max}}}_\G = C_\G$. For example, the maximum dressing signature on the complete graph with $n$ vertices, denoted $\G_n$, is
\begin{equation*}
  \dsig_{\,\text{max}}(\G_n) = \begin{cases}
    & [n,n-1,\ldots,1,0] \text{ if $\G_n$ contains at least one loop,}\\
    & [n,n-1,\ldots,2,0] \text{ if not.}
  \end{cases}
\end{equation*}

\end{remark}

\begin{remark}[Inclusive and exclusive dressing signatures]
     Since the elements of a dressing signature $\dsig$ define a maximum length for prime factors, the dressing signatures of Definition \ref{defn:DressingSignatures} would more precisely be called \key{inclusive} dressing signatures. In contrast we could imagine \key{exclusive} dressing signatures -- denoted $(k_0,k_1,\ldots,k_{D-1},0)$ -- with the corresponding definition that a cycle $c$ is $(k_0,k_1,\ldots,k_{D-1},0)$-structured if and only if the base cycle of $c$ has length exactly $k_0$, and all its immediate children (if any) are $(k_1,\ldots,k_{D-1},0)$-structured. However, exclusive dressing signatures turn out to be less useful than their inclusive counterparts (in particular because the set of $(k_0,k_1,\ldots,k_{D-1},0)$-structured cycles does not coincide with the set of all cycles as the dressing signature is increased) so in this article we will restrict our discussion to the inclusive case. Any mention of `dressing signature' without further qualification thus refers to an inclusive dressing signature, in the sense of Definition \ref{defn:DressingSignatures}. However, it is straightforward to extend the results we present to the case of exclusive dressing signatures.
\end{remark}

\subsection{A family of partitions of the walk set $W_\G$}\label{sec:32:FamilyOfPartitions}
In this section we state the fundamental properties required of $\Reduc{\dsig}$ and $\Dress{\dsig}{\G}$, and show that it follows from these properties that the family of sets produced by applying $\Dress{\dsig}{\G}$ to every $\dsig$-irreducible walk partition $W_\G$.

\begin{definition}\label{defn:ReductionDressingAxioms}
     Given a digraph $\G$ and a dressing signature $\dsig$, let $\Reduc{\dsig} : W_{\G} \rightarrow W_\G$ and $\Dress{\dsig}{\G} : \Reduc{\dsig}\big[W_\G\big]\rightarrow \powset{W_\G}$ be operators satisfying the following properties:
\begin{itemize}
       \item[P1.] $\Reduc{\dsig}$ is idempotent; i.e.~$\Reduc{\dsig}\big[\Reduc{\dsig}[w]\big] = \Reduc{\dsig}[w]$ for any walk $w \in W_\G$,
       \item[P2.] a walk $w$ is an element of $\Dress{\dsig}{\G}[i]$ if and only if $\Reduc{\dsig}[w] = i$.
\end{itemize}
\end{definition}

\noindent Since $\Reduc{\dsig}$ is idempotent, there exist walks that remain unchanged under its action. We term these walks $\dsig$-irreducible.

\begin{definition}[$\dsig$-irreducible walks]\label{defn:KIrreducibleWalks}
     Let $\G$ be a digraph and $w$ be a walk on $\G$. Then, for a given dressing signature $\dsig$, $w$ is said to be $\dsig$-irreducible if and only if $\Reduc{\dsig}\big[w\big] = w$. We denote the set of all $\dsig$-irreducible walks on $\G$ by $I^{\dsig}_\G$, and the set of all $\dsig$-irreducible walks from $\alpha$ to $\omega$ on $\G$ by $I^{\dsig}_{\G;\alpha\omega}$. That is,
\begin{align*}
I^{\dsig}_\G = \big\{w : w \in W_{\G} \text{ and } \Reduc{\dsig}\big[w\big] = w \big\},
\end{align*}
with an analogous expression for $I^{\dsig}_{\G;\alpha\omega}$.
\end{definition}

We give an explicit expression for the set of all $\dsig$-irreducible walks in Section \ref{subsec:StructureOfIK}.

\begin{prop}
       The image of $\Reduc{\dsig}$ is precisely the set of $\dsig$-irreducible walks.
\end{prop}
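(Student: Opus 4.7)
The plan is to show the two inclusions $I^\dsig_\G \subseteq \Reduc{\dsig}\big[W_\G\big]$ and $\Reduc{\dsig}\big[W_\G\big] \subseteq I^\dsig_\G$ separately, exploiting only the idempotence axiom P1 from Definition \ref{defn:ReductionDressingAxioms} together with the definition of $\dsig$-irreducibility (Definition \ref{defn:KIrreducibleWalks}). No structural information about $\Reduc{\dsig}$ beyond P1 should be needed, which is what makes this proposition quick to dispatch at this stage of the paper.

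For the first inclusion, I would take an arbitrary $i \in I^\dsig_\G$. By Definition \ref{defn:KIrreducibleWalks}, $i \in W_\G$ and $\Reduc{\dsig}[i] = i$, so $i$ is the image of itself under $\Reduc{\dsig}$, hence $i \in \Reduc{\dsig}\big[W_\G\big]$. This direction is essentially immediate from unpacking the definition.

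For the reverse inclusion, I would take an arbitrary $w' \in \Reduc{\dsig}\big[W_\G\big]$ and pick some $w \in W_\G$ with $w' = \Reduc{\dsig}[w]$. Applying idempotence P1 yields
\begin{align*}
\Reduc{\dsig}[w'] = \Reduc{\dsig}\big[\Reduc{\dsig}[w]\big] = \Reduc{\dsig}[w] = w',
\end{align*}
so $w'$ satisfies the defining condition of $\dsig$-irreducibility and thus lies in $I^\dsig_\G$. Combining the two inclusions gives $\Reduc{\dsig}\big[W_\G\big] = I^\dsig_\G$, which is the claim.

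There is no real obstacle here; the proposition is essentially the standard fact that the image of an idempotent map coincides with its fixed-point set, specialised to $\Reduc{\dsig}$. The only thing to be careful about is to invoke only the axiom P1, since the explicit construction of $\Reduc{\dsig}$ has been deliberately deferred to Section \ref{sec:S4:WalkReductionWalkDressing}; in particular, property P2 and the dressing operator $\Dress{\dsig}{\G}$ should not enter the argument.
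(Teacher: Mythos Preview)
Your proposal is correct and matches the paper's own proof essentially line for line: both directions are handled exactly as you describe, using only idempotence (P1) and the definition of $\dsig$-irreducibility, with the $i \mapsto \Reduc{\dsig}[i] = i$ observation for one inclusion and the idempotence computation for the other. Your remark that P2 and $\Dress{\dsig}{\G}$ play no role here is also accurate.
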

\begin{proof}
     The proof follows straightforwardly from the idempotence of $\Reduc{\dsig}$ and the definition of $I^{\dsig}_\G$. The image of $\Reduc{\dsig}$ is $\Reduc{\dsig}\big[W_\G\big] = \big\{\Reduc{\dsig}\big[w] : w \in W_\G\big\}$. Because $\Reduc{\dsig}$ is idempotent, every element of this set satisfies $\Reduc{\dsig}[u] = u$ and so is $\dsig$-irreducible. Conversely, every $\dsig$-irreducible walk $i \in I^\dsig_\G$ satisfies $i = \Reduc{\dsig}[w]$ for at least one walk $w \in W_\G$ (for example, $w = i$) and so appears in $\Reduc{\dsig}\big[W_\G\big]$.
\end{proof}
\noindent The walk reduction operator can thus be interpreted as a projector from $W_\G$ to $I^{\dsig}_\G \subseteq W_\G$, the subset of $W_\G$ consisting of $\dsig$-irreducible walks, and the walk dressing operator as a map from $I^{\dsig}_\G$ to the set of all subsets of walks on $\G$.

\begin{theorem}\label{thm:FamilyOfPartitions}
     The family of sets $\big\{\Dress{\dsig}{\G}[i] : i \in I^{\dsig}_\G\big\}$ partitions $W_\G$.
\end{theorem}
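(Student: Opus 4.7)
The plan is to verify directly the three defining conditions of a partition: (i) each set $\Dress{\dsig}{\G}[i]$ is non-empty, (ii) the union of the family is all of $W_\G$, and (iii) distinct sets in the family are disjoint. Since all the work has been offloaded to the axioms P1 (idempotence of $\Reduc{\dsig}$) and P2 (the if-and-only-if characterisation of membership in $\Dress{\dsig}{\G}[i]$), each of the three conditions should reduce to a one-line argument, and the whole proof should be routine.

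First I would dispatch non-emptiness. For any $i\in I^{\dsig}_\G$, Definition \ref{defn:KIrreducibleWalks} gives $\Reduc{\dsig}[i]=i$, so by P2 we have $i\in\Dress{\dsig}{\G}[i]$. Next, for covering, I would pick an arbitrary $w\in W_\G$, set $i:=\Reduc{\dsig}[w]$, and invoke P1 to see that $\Reduc{\dsig}[i]=\Reduc{\dsig}[\Reduc{\dsig}[w]]=\Reduc{\dsig}[w]=i$, so $i\in I^{\dsig}_\G$; then P2 immediately yields $w\in\Dress{\dsig}{\G}[i]$. Hence $W_\G=\bigcup_{i\in I^{\dsig}_\G}\Dress{\dsig}{\G}[i]$. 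Finally, for disjointness, if $w\in\Dress{\dsig}{\G}[i]\cap\Dress{\dsig}{\G}[i']$ for two irreducible walks $i,i'$, applying P2 in both directions forces $i=\Reduc{\dsig}[w]=i'$, so two sets in the family are either identical or disjoint.

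There is no real obstacle here; the abstraction introduced in Definition \ref{defn:ReductionDressingAxioms} has been arranged precisely so that the partition property is a formality. The genuine content of the theorem — that a well-behaved pair $(\Reduc{\dsig},\Dress{\dsig}{\G})$ exists at all, and that the cells $\Dress{\dsig}{\G}[i]$ do capture exactly the walks obtained from $i$ by nesting in all combinations of resummable cycles — is deferred to Section \ref{sec:S4:WalkReductionWalkDressing}, where the explicit constructions are shown to satisfy P1 and P2. Thus, at this stage, I would simply present the three short verifications above and conclude.
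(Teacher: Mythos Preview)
Your proposal is correct and follows essentially the same route as the paper's own proof: both verify non-emptiness, covering, and disjointness directly from P1 and P2, with your argument being slightly more explicit in invoking P1 to confirm that $\Reduc{\dsig}[w]\in I^{\dsig}_\G$ in the covering step. The only cosmetic difference is that the paper phrases (ii) and (iii) as proofs by contradiction, whereas you give the equivalent direct arguments.
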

\begin{proof}
     This claim consists of three statements: (i) that no element of this family is empty; (ii) that the elements cover $W_\G$; (iii) that the elements are pairwise disjoint. We prove each of these statements in turn using the properties given in Definition \ref{defn:ReductionDressingAxioms}.
\begin{enumerate}
       \item[(i)] Consider the set $\Dress{\dsig}{\G}[i]$ for a given walk $i \in I^{\dsig}_\G$. By definition, $i$ satisfies $\Reduc{\dsig}[i] = i$. Then by P2, $i$ is an element of $\Dress{\dsig}{\G}[i]$, so that $\Dress{\dsig}{\G}[i]$ is not empty. Since this holds for every $i \in I^{\dsig}_\G$, no element of the family $\big\{\Dress{\dsig}{\G}[i] : i \in I^{\dsig}_\G\big\}$ is empty.\\

\item[(ii)] We wish to show that
\begin{align}
         \bigcup_{i\,\in I^{\dsig}_\G} \Dress{\dsig}{\G}[i] = W_\G.\label{eqn:PartitionPropII}
\end{align}
       Let us assume the opposite. Then there exists some walk $w \in W_\G$ that does not appear in any of the sets on the left-hand side. Let $j = \Reduc{\dsig}[w]$. Then $j$ is an element of $I^{\dsig}_\G$, and so $\Dress{\dsig}{\G}[j]$ appears as one of the terms on the left-hand side of Eq.~\eqref{eqn:PartitionPropII}. Further, by P2, $w$ is in $\Dress{\dsig}{\G}[j]$. Thus $w$ does appear in one of the sets on the left-hand side: a contradiction. Thus no such walk $w$ exists.\\

       \item[(iii)] We wish to show that $\Dress{\dsig}{\G}[i] \cap \Dress{\dsig}{\G}[j] = \emptyset $ for $i,j \in I^{\dsig}_\G$ and $i \neq j$. We prove this by contradiction. Assume the opposite: that there is a pair of sets $\Dress{\dsig}{\G}[i]$ and $\Dress{\dsig}{\G}[j]$ with $i \neq j$ that are not disjoint. Then there exists a walk $w \in W_\G$ that satisfies $w \in \Dress{\dsig}{\G}[i]$ and $w \in \Dress{\dsig}{\G}[j]$. Then by P2, we have that $\Reduc{\dsig}[w] = i$ and $\Reduc{\dsig}[w] = j$: a contradiction. Thus no such walk $w$ exists.
\end{enumerate}
\end{proof}

We have thus shown that if explicit definitions can be found for $\Reduc{\dsig}$ and $\Dress{\dsig}{\G}$ that satisfy properties P1 and P2 of Definition \ref{defn:ReductionDressingAxioms}, then a family of partitions of the walk set $W_\G$ follows straightforwardly. Presenting these explicit definitions and proving that they satisfy the required properties is the main topic of Section \ref{sec:S4:WalkReductionWalkDressing}.

\subsection{The walk reduction operator as an equivalence relation}\label{sec:EquivRelation}
It is common knowledge that every partition of a set is equivalent to an equivalence relation on that set, with equivalence classes equal to the elements of the partition. In this section we define the equivalence relations on $W_\G$ that correspond to the partitions given in Theorem \ref{thm:FamilyOfPartitions}.

\begin{definition}[The equivalence relations $\Kequiv$.]
Given a dressing signature $\dsig$, let $\Kequiv$ be the relation on $W_{\G}$ defined by
\begin{align*}
w_1 \Kequiv w_2 \text{ if and only if } \Reduc{\dsig}[w_1] = \Reduc{\dsig}[w_2].
\end{align*}
     This relation is reflexive, symmetric, and transitive, and therefore defines an equivalence relation on $W_\G$. We say that two walks $w_1, w_2 \in W_{\G}$ such that $w_1 \Kequiv w_2$ are $\dsig$-equivalent to each other.
\end{definition}

Two walks $w_1$ and $w_2$ are $\dsig$-equivalent if and only if they can be transformed into one another by adding or removing some number (possibly zero) of resummable cycles.

\begin{definition}[The $\dsig$-equivalence class of a walk]
Given a walk $w \in W_{\G}$, its $\dsig$-equivalence class $[w]_\dsig$ is the set of all walks that are $\dsig$-equivalent to $w$:
\begin{align*}
    [w]_\dsig = \big\{u \in W_{\G} : u \Kequiv w\big\}.
\end{align*}
\end{definition}

\begin{prop}
The $\dsig$-equivalence class of any walk $w \in W_\G$ is the set $\Dress{\dsig}{\G}\big[\Reduc{\dsig}[w]\big]$.
\end{prop}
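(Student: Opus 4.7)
The plan is to prove the set equality $[w]_\dsig = \Dress{\dsig}{\G}\big[\Reduc{\dsig}[w]\big]$ by unfolding both sides into set-builder notation involving $\Reduc{\dsig}$ and observing that they agree term for term. Since both the equivalence class $[w]_\dsig$ (via the relation $\Kequiv$) and the dressing set (via property P2) are ultimately characterised by the value of $\Reduc{\dsig}$, the entire proof reduces to a short chain of rewrites, with the only non-cosmetic ingredient being a check that $\Reduc{\dsig}[w]$ actually lies in the domain of $\Dress{\dsig}{\G}$.

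First I would set $i = \Reduc{\dsig}[w]$ and verify that $i \in I^\dsig_\G$, so that the expression $\Dress{\dsig}{\G}[i]$ is well-defined. This is immediate from property P1: idempotence of $\Reduc{\dsig}$ gives $\Reduc{\dsig}[i] = \Reduc{\dsig}\big[\Reduc{\dsig}[w]\big] = \Reduc{\dsig}[w] = i$, so $i$ satisfies the defining condition of Definition \ref{defn:KIrreducibleWalks}.

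Next, I would apply property P2 to rewrite the dressing set as $\Dress{\dsig}{\G}[i] = \{u \in W_\G : \Reduc{\dsig}[u] = i\}$, that is, $\Dress{\dsig}{\G}\big[\Reduc{\dsig}[w]\big] = \{u \in W_\G : \Reduc{\dsig}[u] = \Reduc{\dsig}[w]\}$. Reading off the definition of $\Kequiv$, the condition $\Reduc{\dsig}[u] = \Reduc{\dsig}[w]$ is precisely the statement $u \Kequiv w$, so the right-hand side coincides with $\{u \in W_\G : u \Kequiv w\} = [w]_\dsig$. This establishes the two sets are equal.

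There is no substantial obstacle here: the result is essentially a restatement of the well-known correspondence between partitions and equivalence relations, specialised to the partition $\{\Dress{\dsig}{\G}[i] : i \in I^\dsig_\G\}$ of Theorem \ref{thm:FamilyOfPartitions}. The only point that requires mild care is ensuring that $\Reduc{\dsig}[w]$ is a legal input to $\Dress{\dsig}{\G}$, which is handled cleanly by P1, and that P2 is invoked in both directions (membership in $\Dress{\dsig}{\G}[i]$ iff $\Reduc{\dsig}[u] = i$) rather than just one, so that the argument yields set equality and not merely inclusion.
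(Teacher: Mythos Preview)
Your proposal is correct and follows essentially the same approach as the paper: both arguments rest entirely on property P2, identifying $\Dress{\dsig}{\G}\big[\Reduc{\dsig}[w]\big]$ with the set of walks $u$ satisfying $\Reduc{\dsig}[u] = \Reduc{\dsig}[w]$, which is exactly $[w]_\dsig$. The paper phrases this as a two-direction element chase while you do a direct set-builder rewrite, and you add an explicit domain check via P1 that the paper leaves implicit; these are cosmetic rather than substantive differences.
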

\begin{proof}
The proof relies on property P2 of Definition \ref{defn:ReductionDressingAxioms}. Let $u$ be an element of  $[w]_\dsig$. Then by definition we have $\Reduc{\dsig}[u] = \Reduc{\dsig}[w]$, and so $u$ is in $\Dress{\dsig}{\G}\big[\Reduc{\dsig}[w]\big]$. Conversely, let $v$ be an element of $\Dress{\dsig}{\G}\big[\Reduc{\dsig}[w]\big]$. Then it follows that $\Reduc{\dsig}[v] = \Reduc{\dsig}[w]$, so that $v$ is in $[w]_\dsig$.
\end{proof}
Given an equivalence class $[w]_\dsig$, it is convenient to define a unique representative element (the `canonical representative') by which it can be referred to. Since every equivalence class $[w]_\dsig$ contains precisely one $\dsig$-irreducible walk $\Reduc{\dsig}[w]$, it is natural to adopt $\Reduc{\dsig}[w]$ as the canonical representative of $[w]_\dsig$.

\section{The walk reduction and walk dressing operators}\label{sec:S4:WalkReductionWalkDressing}
In this section we give explicit definitions for the walk reduction operator $\Reduc{\dsig}$ and walk dressing operator $\Dress{\dsig}{\G}$, and show that these definitions satisfy the desired properties stated in Definition \ref{defn:ReductionDressingAxioms}. We further obtain an explicit recursive expression for $I^\dsig_{\G}$, the set of $\dsig$-irreducible walks on $\G$. As shown in Theorem \ref{thm:FamilyOfPartitions}, knowledge of $I^\dsig_{\G}$ and $\Dress{\dsig}{\G}$ allows a partition of the set of all walks on an arbitrary digraph $\G$ to be constructed.

Since the actions of $\Reduc{\dsig}$ and $\Dress{\dsig}{\G}$ are defined in terms of resummable cycles, we first require a more precise definition of which cycles are resummable within the resummation scheme indexed by a given dressing signature $\dsig$. We take up this subject in detail in Section \ref{sec:41:ResummableCycles}. In Section \ref{sec:42:RandDeltaMotivation} we show that both $\Reduc{\dsig}$ and $\Dress{\dsig}{\G}$ admit a natural recursive definition in terms of operators that act on the individual cycles that make up $w$. Since their action on cycles parallels that of $\Reduc{\dsig}$ and $\Dress{\dsig}{\G}$ on walks, we term these operators the cycle reduction operators (denoted $\reduc{\dsig;l}$) and cycle dressing operators (denoted $\dress{\dsig;l}{\G}$) respectively. We define $\reduc{\dsig;l}$ in Section \ref{sec:CycleReductionOperators}, paving the way for the explicit definition and proof of idempotence of $\Reduc{\dsig}$ in Section \ref{sec:WalkReductionOperators}. In Section \ref{subsec:StructureOfQKL} we define a set of cycles that are invariant under the action of the cycle reduction operators. We term these cycles the $(\dsig,l)$-irreducible cycles. Their definition paves the way for the explicit definition of the set of $\dsig$-irreducible walks in Section \ref{subsec:StructureOfIK}. In Section \ref{subsec:CycleDressingOperators} we define the cycle dressing operators and in Section \ref{sec:WalkDressingOperator} the walk dressing operator. In Section \ref{subsec:rdeltaInverses} we show that the cycle reduction and cycle dressing operators are inverses of one another, in the sense that $\dress{\dsig;l}{\G}[q]$ produces the pre-image under $\reduc{\dsig;l}$ of a given $(\dsig,l)$-irreducible cycle $q$. In Section \ref{subsec:RDeltaInverses} we exploit this result to prove that $\Reduc{\dsig}$ and $\Dress{\dsig}{\G}$ are inverses of one another, thus fulfilling the aim of this section: to show that the explicit definitions for $\Reduc{\dsig}$ and $\Dress{\dsig}{\G}$ that we provide satisfy properties P1 and P2 of Definition \ref{defn:ReductionDressingAxioms}.

\subsection{Resummable cycles}\label{sec:41:ResummableCycles}
The desired actions of the walk reduction and walk dressing operators for a given dressing signature are simple to describe: $\Reduc{\dsig}$ maps a given walk $w$ to its $\dsig$-irreducible core walk by removing all resummable cycles from it, while $\Dress{\dsig}{\G}$ maps a $\dsig$-irreducible walk $i$ to the set of walks formed by dressing the vertices of $i$ by all possible configurations of resummable cycles. In order to convert this statement of intent into explicit definitions for  $\Reduc{\dsig}$ and $\Dress{\dsig}{\G}$, we require a precise definition of when a cycle can be said to be `resummable'.

The general question we want to address can be stated as follows. Let $w$ be a walk with syntax tree $\gentree{}$, $c$ be a cycle contained in a proper subtree $\textsl{t}$ of $\gentree{}$, and $\dsig$ be a dressing signature. Then we wish to decide between two possibilities: either $c$ is part of the $\dsig$-irreducible core walk $\Reduc{\dsig}[w]$ (and so is not resummable), or $c$ is produced by dressing $\Reduc{\dsig}[w]$ as prescribed by $\Dress{\dsig}{\G}$ (and so is resummable). In the latter case, it follows that deleting $c$ and all other resummable cycles from $w$ produces a walk that contains no resummable cycles.

The na\"ive solution to this question would be to assume that $c$ is resummable if and only if it is $\dsig$-structured. Indeed, for the case of $\dsig = [1,0]$ this is correct: deleting all the loops from a walk produces a walk that does not contain any loops, and $\Dress{[1,0]}{\G}$ can be defined so as to add all possible configurations of loops off the vertices of a loopless walk $i$. Identifying `resummable' with `[1,0]-structured' is thus perfectly consistent. However, this approach fails for all other dressing signatures: due to the possibility of nesting cycles inside other cycles for dressing signatures with $k_0 \ge 2$, it is not generally true that deleting all $\dsig$-structured cycles from a walk produces a walk that contains no $\dsig$-structured cycles. Alternatively, it might be thought that recursively deleting $\dsig$-structured cycles until the underlying walk no longer changes (i.e.~performing multiple passes over the walk, deleting $\dsig$-structured cycles each time) might be a valid approach. This is also not true, since it leads to the gradual deletion of arbitrarily deeply-nested cycles that cannot be reconstructed by a dressing scheme limited to adding cycles with a structure dictated by $\dsig$. (Consider for example $\dsig = [2,0]$ and the cycle $121 \nest 232 \nest \cdots\nest 898$). To illustrate these complexities it is useful to consider two simple examples.

\begin{example}\label{ex:ResummableCycles}
Consider the task of identifying which cycles in the walk $w = 12321 = 1 \nest 121 \nest 232$, if any, are resummable with respect to the dressing signature $\dsig =[2,0]$. There are only two possible candidates: $12321$ and $232$. We note firstly that 12321 is not $[2,0]$-structured, and so cannot possibly be the product of a dressing scheme that adds $[2,0]$-structured cycles. On the other hand, 232 is $[2,0]$-structured, and so might be thought to be resummable. However, removing it from $w$ leaves the walk $1 \nest 121$, which is not $[2,0]$-irreducible on account of the resummable cycle 121. Thus the cycle 232 is not the result of dressing a $[2,0]$-irreducible walk by $[2,0]$-structured cycles, and we conclude that -- despite being $[2,0]$-structured -- it is not resummable.
\end{example}

\begin{example}\label{ex:ResummableCycles2}
Next consider the task of identifying which cycles in the walk $w = 1232421 = 1 \nest 121 \nest 232 \nest 242$ are resummable with respect to the dressing signature $\dsig = [2,0]$. The only two candidates are the backtracks 232 and 242. Since removing them both leaves the walk $1 \nest 121$, which is not $[2,0]$-irreducible, we conclude that they cannot both be resummable. However, removing only one of them does leave a $[2,0]$-irreducible walk: $1\nest 12421$ if 232 is removed, and $1 \nest 12321$ if 242 is removed. Therefore precisely one of 232 and 242 is resummable. Which one is deemed resummable (and thus produced by dressing) and which is deemed to be part of the `core' irreducible walk is purely a matter of convention, which is equivalent to the decision of whether the walk $1 \nest 121 \nest 232 \nest 242$ should be generated by (a) adding 232 to the first appearance of the vertex 2 in the $[2,0]$-irreducible walk 12421, or (b) adding 242 to the second appearance of 2 in the $[2,0]$-irreducible walk 12321. We adopt the latter convention. The choice influences the exact forms taken by the cycle dressing and walk dressing operators, which we provide in Sections \ref{defn:CycleDressingOperators} and \ref{sec:WalkDressingOperator}.
\end{example}

These examples illustrate that whether or not a given cycle $c$ in a walk $w$ is resummable within the resummation scheme identified by a given dressing signature $\dsig$ depends not only on the structure of $c$, but also on its environment: specifically, on the ancestors and left siblings\footnote{The dependence on the left siblings, rather than the right siblings, is a direct consequence of the choice of convention mentioned in Example \ref{ex:ResummableCycles2}.} of $c$ in the syntax tree of $w$. In order to take this into account, we assign a quantity that we term \key{local depth} to each node in a syntax tree.

\begin{definition}[Local depth]\label{defn:LocalDepth}
     Let $\gentree{}$ be a canonical syntax tree, and $\dsig = [k_0,\ldots,k_{D-1},0]$ a dressing signature of depth $D$. Then the local depth of a node $\textsl{n}$ of $\gentree{}$ with respect to $\dsig$, denoted $\locdepth{\dsig}(\textsl{n})$, is an integer between $0$ and $D$ inclusive, defined as follows. The root node $\rootnode{\gentree{}}$ and all its immediate children have a local depth of 0, while every other node $\textsl{n}$ has a local depth given by
\begin{align*}
\locdepth{\dsig}(\textsl{n}) =
\begin{cases}
\locdepth{\dsig}(\textsl{p})  + 1&
\begin{aligned}[c]
           &\text{if $\len(\textsl{p.contents}) \le k_{\locdepth{\dsig}(\textsl{p})}$ and}\\
           &\text{\textsl{t(n$_j$).contents} is $[k_{\locdepth{\dsig}(\textsl{p})+1},\ldots,0]$-structured for $1 \le j \le i-1$,}
\end{aligned} \\[3mm]
0 & \text{otherwise,}
\end{cases}
\end{align*}
     where $\textsl{p} \equiv \textsl{n.parent}$ is the parent node of $\textsl{n}$ in \gentree{}, $i$ is the position of node $\textsl{n}$ within its hedge (counting from left to right), $\textsl{n}_j$ is a left sibling node of $\textsl{n}$, and $\textsl{t}(\textsl{n}_j)$ is a proper subtree of \gentree{} rooted at $\textsl{n}_j$. In a minor abuse of terminology, we will frequently refer to the local depth of a simple cycle $s$, which is to be understood as being shorthand for the local depth of the node containing $s$.
\end{definition}

\begin{remark}
     Note that the local depth of a simple cycle $s$ does not depend on the length of $s$ itself, but only on the length of the parent and the structure of the cycles contained in the subtrees rooted at each of the left siblings of $s$ (if any) in the syntax tree.
\end{remark}

\begin{figure}
  \centering
  \includegraphics[scale=1.35]{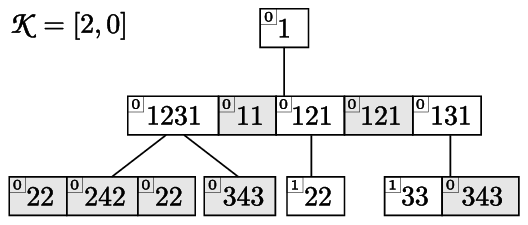}\hfill
  \includegraphics[scale=1.35]{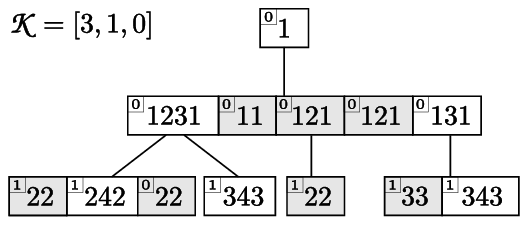}
  \caption{The canonical syntax tree for the walk $w = 1 \nest (1231 \nest 343 \nest 22 \nest 242 \nest 22)$$\nest 11 \nest (121 \nest 22)$$\nest 121 \nest (131 \nest 33 \nest 343)$, showing the local depth of each node with respect to the dressing signatures $\dsig = [2,0]$ (left) and $\dsig =[3,1,0]$ (right). The local depth is evaluated according to Def.~\ref{defn:LocalDepth}. Nodes that contain resummable cycles (see Def.~\ref{defn:ResummableCycle}) are shaded grey. The $\dsig$-irreducible walk thus consists of the white nodes in each case: $\Reduc{[2,0]}[w] = 1 \nest 1231 \nest (121 \nest 22) \nest (131 \nest 33)$ and $\Reduc{[3,1,0]}[w] = 1 \nest (1231 \nest 343 \nest 242) \nest (131 \nest 343)$.}
\end{figure}

The local depth of a node \textsl{n} can be understood as follows. Let $\dsig=[k_0,k_1,0]$ be a dressing signature of depth 2. Consider a node \textsl{n} in the syntax tree for a walk $w$ whose parent node is definitely a part of the core $\dsig$-irreducible walk $\Reduc{\dsig}[w]$. Then checking to see whether the cycle \textsl{t(n).contents} could have been produced by adding $\dsig$-structured cycles to $\Reduc{\dsig}[w]$ simply amounts to checking whether \textsl{t(n).contents} is $[k_0,k_1,0]$-structured. Such a node is assigned a local depth of 0.

On the other hand, let \textsl{n} be a node whose parent \textsl{p = n.parent} has local depth 0. Then the parent of \textsl{p} (i.e.~the grandparent of \textsl{n}) is definitely a part of $\Reduc{\dsig}[w]$, but there are two possibilities for \textsl{p} itself. The first is that by inspecting \textsl{p} and the left siblings of \textsl{n}, it can be concluded that \textsl{p} is definitely a part of the core irreducible walk. This is the case if \textsl{t(p).contents} is not $[k_0,k_1,0]$-structured, which is the case if either (i) \textsl{p.contents} is longer than $k_0$, or (ii) at least one of its child cycles that have already been seen -- i.e.~the cycles in the subtrees rooted at the left siblings of \textsl{n} -- is not $[k_1,0]$-structured. Then the situation is identical to that in the preceding paragraph: \textsl{n} is assigned local depth 0, indicating that \textsl{t(n).contents} should be checked to see if it is $[k_0,k_1,0]$-structured.

The second possibility is that cannot be concluded from this inspection that \textsl{p} is definitely part of the core irreducible walk. This is the case if (i) \textsl{p.contents} is not longer than $k_0$, and (ii) all of the cycles in the subtrees rooted at the left siblings of \textsl{n} are $[k_1,0]$-structured. Then it is possible that \textsl{t(p).contents} is the result of dressing its parent node by $[k_0,k_1,0]$-structured cycles. In this case, checking whether \textsl{t(n).contents} could have been produced by adding $\dsig$-structured cycles to $\Reduc{\dsig}[w]$ requires checking not whether \textsl{t(n).contents} is $[k_0,k_1,0]$-structured, but whether it is $[k_1,0]$-structured. We indicate this by assigning \textsl{n} a local depth of 1.\footnote{Note that whether \textsl{p} is \key{actually} a part of $\Reduc{\dsig}[w]$ or not is irrelevant: it is only important whether or not it can be concluded from \textsl{p} and the left siblings of \textsl{n} that \textsl{p} is in $\Reduc{\dsig}[w]$. The distinction is important if, for example, all of cycles in the subtrees rooted at the left siblings of \textsl{n} are $[k_1,0]$-structured, but one of the cycles in a subtree rooted at a \key{right} sibling of \textsl{n} is not. Then \textsl{t(p).contents} is not $[k_0,k_1,0]$-structured and \textsl{p} is a part of $\Reduc{\dsig}[w]$. However, since we only consider \key{left} siblings in assigning the local depth of a node, \textsl{n} is assigned a local depth of 1 regardless, and \textsl{t(n).contents} may be resummable even though \textsl{t(p).contents} is not.}

Extending this line of reasoning to deeper dressing signatures produces Definition \ref{defn:LocalDepth}. The local depth of a node $\textsl{n}$ indicates which trailing subsequence of the dressing signature $\dsig$ the cycle $\textsl{t(n).contents}$ should be compared against to check whether it is resummable or not.

The case of depth-1 dressing signatures, i.e.~$\dsig = [k_0,0]$, is straightforward: a simple cycle $s$ has local depth 1 if and only if its parent is a simple cycle not longer than $k_0$ and $s$ is the first child in its hedge. In this case the parent cycle and $s$, taken together, form part of the underlying $\dsig$-irreducible walk. Any right siblings that $s$ may have are assigned local depth 0 and compared against the dressing signature $[k_0,0]$, since they may arise from dressing the parent of $s$ by $[k_0,0]$-structured cycles. This is the case considered in Example \ref{ex:ResummableCycles2}.

Given a syntax tree \gentree{} and a dressing signature $\dsig$, the local depth can be assigned to each node in a pre-order depth-first traversal of \gentree{}, where the children within a hedge are visited from left to right.

We then have the following concise definition of resummability.

\begin{definition}[Resummable cycles]\label{defn:ResummableCycle}
     Let $w$ be a walk with syntax tree $\textsl{T}$, $c$ be a cycle corresponding to a proper subtree \textsl{t} of \gentree{}, and $\dsig = [k_0,\ldots,k_{D-1},0]$ be a dressing signature of depth $D$. Then $c$ is resummable in $w$ with respect to $\dsig$ if, and only if, $c$ is $[k_\ell,\ldots,k_{D-1},0]$-structured, where $0 \le \ell\le D$ is the local depth of $\textsl{t.root}$ with respect to $\dsig$.
\end{definition}

It is straightforward to verify that this definition makes the correct predictions in the situations considered in Examples \ref{ex:ResummableCycles} and \ref{ex:ResummableCycles2}.

\subsection{Constructing $\Reduc{\dsig}$ and $\Dress{\dsig}{\G}$}\label{sec:42:RandDeltaMotivation}
We now turn to developing explicit definitions for the walk reduction operator and walk dressing operator that satisfy the desired properties stated in Definition \ref{defn:ReductionDressingAxioms}. Since every walk can be decomposed into a simple path plus a collection of cycles (Eq.~\eqref{eqn:WalkPrimeFactorisation}), it is natural to define $\Reduc{\dsig}$ and $\Dress{\dsig}{\G}$ in terms of a collection of operators that act on cycles. These operators perform analogous roles on cycles to those of $\Reduc{\dsig}$ and $\Dress{\dsig}{\G}$ on walks, and are termed the cycle reduction operators and cycle dressing operators respectively. In this section we discuss the motivation for introducing these operators and give an overview of their properties. 

Consider first the walk reduction operator $\Reduc{\dsig}$, which deletes all resummable cycles from a walk $w$ by mapping each one to the corresponding trivial walk. This can be achieved through the following two-step construction. Let $\dsig$ have depth $D$; then we introduce a family of cycle reduction operators, which we denote by $\reduc{\dsig;l}$ for $0\le l \le D$. Specifically, $\reduc{\dsig;l}$ is a map on $C_\G \cup T_\G$ (where $T_\G$ is the set of trivial walks on $\G$) defined such that applying $\reduc{\dsig;l}$ to a cycle $c$ rooted at local depth $l$ removes all resummable cycles (up to and including $c$ itself) from $c$. Since $c$ itself must be deleted if it is resummable, it follows that $\reduc{\dsig;l}$ maps $c$ to the trivial walk $(h(c))$ if and only if $c$ is $[k_l,\ldots,k_{D-1},0]$-structured. Alternatively, if $c$ is left unchanged under the action of $\reduc{\dsig;l}$, it is termed a $(\dsig,l)$-irreducible cycle.

We then define $\Reduc{\dsig}$ such that every cycle $c$ in $w$ is mapped to $\reduc{\dsig;\ell}[c]$ in $\Reduc{\dsig}[w]$, where $\ell$ is the local depth of the base cycle of $c$. Since a cycle rooted at local depth $\ell$ is resummable if and only if it is $[k_{\ell},\ldots,k_{D-1},0]$-structured, it follows that a cycle $c$ in $w$ is mapped to $(h(c))$ in $\Reduc{\dsig}[w]$ if and only if $c$ is resummable with respect to $\dsig$, as desired. A further consequence of this construction is that a walk is $\dsig$-irreducible if and only if every cycle $c$ it contains is $(\dsig,\ell)$-irreducible, where $\ell$ is the local depth of the root node of the subtree containing $c$. Consequently, finding the set of $(\dsig,l)$-irreducible cycles is a significant step towards finding the set of $\dsig$-irreducible walks.


Next consider the walk dressing operator $\Dress{\dsig}{\G}$, which maps a $\dsig$-irreducible walk $i$ to the set of walks formed by adding all possible configurations of resummable cycles on $\G$ to $i$. The desired behaviour for $\Dress{\dsig}{\G}$ can be achieved by a two-step construction similar to that for $\Reduc{\dsig}$. Let $\dsig$ have depth $D$; then we introduce a family of cycle dressing operators, which we denote by $\dress{\dsig;l}{\G}$ for $0 \le l \le D$. These are defined such that $\dress{\dsig;l}{\G}: \reduc{\dsig;l}\big[C_\G\big]\setminus T_\G \rightarrow \powset{C_\G}$ maps a $(\dsig,l)$-irreducible cycle $q$ rooted at local depth $l$ to the set of cycles formed by adding all possible configurations of resummable cycles to $q$. Consequently, every cycle $c$ in $\dress{\dsig;l}{\G}[q]$ satisfies $\reduc{\dsig;l}[c] = q$, since any resummable cycles added by $\dress{\dsig;l}{\G}$ are removed by $\reduc{\dsig;l}$.

Then we define $\Dress{\dsig}{\G}$ such that (i) all possible configurations of $\dsig$-structured cycles are added as immediate children of the base path of $i$, and (ii) every cycle $q$ in $i$ is mapped to $\dress{\dsig;\ell}{\G'}[q]$ in $\Dress{\dsig}{\G}[i]$, where $\ell$ is the local depth of the root node of $q$ and $\G'$ is the graph on which $q$ exists (specifically, $\G$ minus a subset of its vertices -- see e.g.~Eqs.~\eqref{eqn:WalkPrimeFactorisation} and \eqref{eqn:CyclePrimeFactorisation}).

This is sufficient to produce the desired behaviour, since then every walk $w \in \Dress{\dsig}{\G}[i]$ differs from $i$ by (i) a collection of $\dsig$-structured cycles having been added as immediate children of the base path of $i$, and (ii) every cycle $q$ having been replaced by a cycle $c \in \dress{\dsig;\ell}{\G'}[q]$. Thus when $\Reduc{\dsig}$ is applied to $w$, $\reduc{\dsig;0}$ is applied to each of the newly-added $\dsig$-structured cycles, mapping each back to the corresponding trivial walk, and $\reduc{\dsig;\ell}$ is applied to every other cycle $c$, mapping it back to $q$. Therefore applying $\Reduc{\dsig}$ to $w$ recovers the $\dsig$-irreducible walk $i$.\\

\noindent Having discussed the motivation for introducing the cycle reduction and cycle dressing operators, we now state the fundamental properties they must satisfy. We give explicit definitions that satisfy these properties in Sections \ref{sec:CycleReductionOperators} and \ref{subsec:CycleDressingOperators} respectively.

\begin{definition}\label{defn:reductiondressingAxioms}
     Given a digraph $\G$, a dressing signature $\dsig$ of depth $D$, and a parameter $0 \le l \le D$, let $\reduc{\dsig;l} : C_\G \cup T_\G \rightarrow C_\G \cup T_\G$
     and $\dress{\dsig;l}{\G} : \reduc{\dsig;l}\big[C_\G\big]\setminus T_\G \rightarrow 2^{\,C_\G}$ be maps such that
\begin{itemize}
       \item[P1.] $\reduc{\dsig;l}$ is idempotent; i.e.~$\reduc{\dsig;l}\big[\reduc{\dsig;l}[d]\big] =\reduc{\dsig;l}[d]$ for every $d \in C_\G \cup T_\G$,
       \item[P2.] $\reduc{\dsig;l}[d] \in T_\G$ if and only if (i) $d \in T_\G$ or (ii) $d$ is a $[k_l,\ldots,k_{D-1},0]$-structured cycle,
       \item[P3.] a cycle $c$ is an element of $\dress{\dsig;l}{\G}[q]$ if and only if $\reduc{\dsig;l}[c] = q$.
\end{itemize}
     Recall from \S\ref{subsec:WalksPathsCycles} that $T_\G = \big\{(\alpha) : \alpha \in \G \big\}$ is the set of trivial walks on $\G$.
\end{definition}
\begin{figure}\label{fig:cycle_set_partition}
\centering
\includegraphics[scale=1.20]{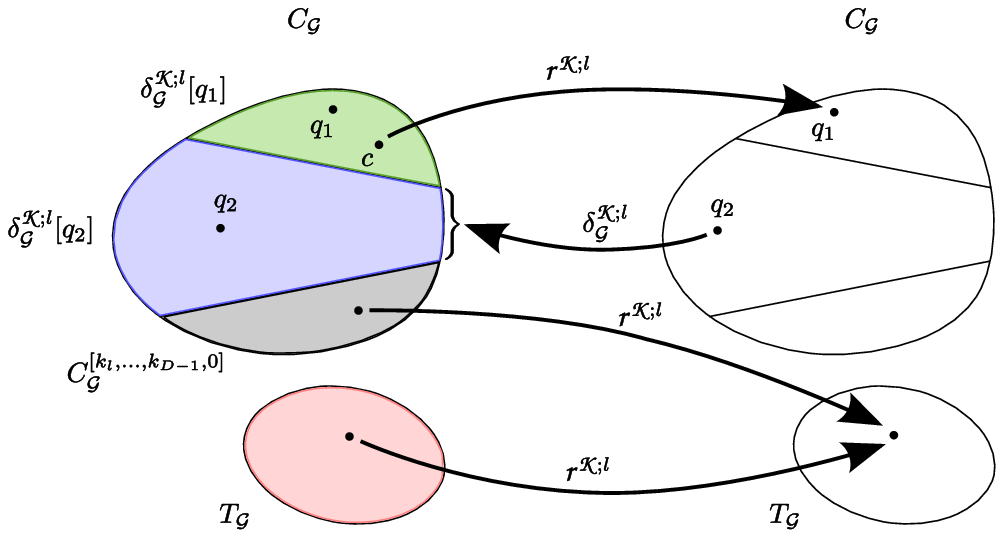}
  \caption{A schematic illustration of the action of the cycle reduction operator $\reduc{\dsig;l}$ and cycle dressing operator $\dress{\dsig;l}{\G}$ for a particular dressing signature $\dsig$ and parameter $l$. The cycle reduction operator maps a cycle $c \in C_\G$ either to the trivial walk $(h(c))$, if $c$ is $[k_l,\ldots,k_{D-1},0]$-structured, or to a $(\dsig,l)$-irreducible cycle $q \in Q^{\dsig;l}_{\G}$, otherwise. The cycle dressing operator maps a $(\dsig,l)$-irreducible cycle $q$ to the set of all cycles that satisfy $\reduc{\dsig;l}[c] = q$. The set of $[k_l,\ldots,k_{D-1},0]$-structured cycles together with the family of sets $\big\{\dress{\dsig;l}{\G}[q] : q \in Q^{\dsig;l}_{\G}\big\}$ form a decomposition of $C_{\G}$. A different value of $\dsig$ or $l$ induces a different decomposition.}
\end{figure}
The actions of $\reduc{\dsig;l}$ and $\dress{\dsig;l}{\G}$ are depicted in Figure \ref{fig:cycle_set_partition}.\\

Since $\reduc{\dsig;l}$ is idempotent, there is a subset of its domain $C_\G \cup T_\G$ that consists of elements that remain unchanged under the action of $\reduc{\dsig;l}$. Due to property P2, this subset contains the set of trivial walks. The remaining elements are cycles that satisfy $\reduc{\dsig;l}[c] = c$. We term such a cycle a $(\dsig,l)$-irreducible cycle.

\begin{definition}[$(\dsig,l)$-irreducible cycles]\label{defn:KLIrreducibleCycles}
Let $\G$ be a digraph and $\dsig$ be a dressing signature with depth $D$. Then a cycle $c \in C_\G$ is said to be $(\dsig,l)$-irreducible for $0 \le l \le D$ if and only if $\reduc{\dsig;l}[c] = c$. We denote the set of all $(\dsig,l)$-irreducible cycles off a particular vertex $\alpha$ on $\G$ by $Q^{\dsig;l}_{\G;\alpha}$; that is
\begin{align*}
Q^{\dsig;l}_{\G;\alpha}
= \left\{c : c \in C_{\G;\alpha} \text{ and } \reduc{\dsig;l}[c] = c\right\},
\end{align*}
and the set of all $(\dsig,l)$-irreducible cycles on $\G$ by $Q^{\dsig;l}_{\G} = \cup_\alpha\, Q^{\dsig;l}_{\G;\alpha}$.
\end{definition}
We give an explicit expression for $Q^{\dsig;l}_{\G;\alpha}$ in Section \ref{subsec:StructureOfQKL}. In the remainder of this section we prove some fundamental properties of $\reduc{\dsig;l}$ and $\dress{\dsig;l}{\G}$ that follow from the properties laid down in Definition \ref{defn:reductiondressingAxioms}.

\begin{prop}\label{prop:ImageOfrkl}
The image of $\reduc{\dsig;l}$ is the set $Q^{\dsig;l}_{\G} \cup T_\G$.
\end{prop}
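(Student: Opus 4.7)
The proof will mirror the structure of the earlier proposition concerning the image of $\Reduc{\dsig}$, proceeding by double inclusion. The main work is to translate idempotence (property P1) and the defining conditions on $T_\G$ (property P2) into the two required containments.

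For the forward inclusion, I would take an arbitrary element $y$ of the image, so that $y = \reduc{\dsig;l}[d]$ for some $d \in C_\G \cup T_\G$. Applying idempotence (P1) gives
\begin{equation*}
\reduc{\dsig;l}[y] = \reduc{\dsig;l}\bigl[\reduc{\dsig;l}[d]\bigr] = \reduc{\dsig;l}[d] = y,
\end{equation*}
so $y$ is a fixed point of $\reduc{\dsig;l}$. Since the codomain of $\reduc{\dsig;l}$ is $C_\G \cup T_\G$, we split into two cases: if $y \in T_\G$ we are done immediately, and if $y \in C_\G$ then $\reduc{\dsig;l}[y] = y$ places $y$ in $Q^{\dsig;l}_\G$ by Definition~\ref{defn:KLIrreducibleCycles}. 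Hence the image is contained in $Q^{\dsig;l}_\G \cup T_\G$.

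For the reverse inclusion, any $q \in Q^{\dsig;l}_\G$ satisfies $\reduc{\dsig;l}[q] = q$ by definition, so taking $d = q$ exhibits $q$ as an element of the image. For a trivial walk $(\alpha) \in T_\G$, P2 applied to $d = (\alpha)$ tells us that $\reduc{\dsig;l}[(\alpha)] \in T_\G$, and combining this with idempotence forces $\reduc{\dsig;l}[(\alpha)]$ to be a fixed trivial walk. Using the natural convention (implicit in the semantics of $\reduc{\dsig;l}$ as an operator that deletes resummable cycles while preserving basepoints) that $\reduc{\dsig;l}$ preserves head vertices, this fixed trivial walk must be $(\alpha)$ itself, so $(\alpha)$ lies in the image.

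The main subtlety (and really the only non-routine point) is the trivial-walk part of the reverse inclusion: P1 and P2 as stated guarantee that $\reduc{\dsig;l}[(\alpha)]$ is \emph{some} trivial walk, but pinning it down to be $(\alpha)$ relies on the fact that the cycle reduction operator preserves the head vertex. This is evident from the intended semantics described in Section~\ref{sec:42:RandDeltaMotivation} (a resummable cycle $c$ is mapped to $(h(c))$), and once the explicit definition of $\reduc{\dsig;l}$ is given in Section~\ref{sec:CycleReductionOperators} it will be verifiable directly; at the level of the axioms of Definition~\ref{defn:reductiondressingAxioms} it may be regarded as part of the intended interpretation of P2.
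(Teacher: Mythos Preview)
Your proof is correct and takes essentially the same approach as the paper, which also proceeds by double inclusion using idempotence for the forward direction and the fixed-point characterization of $Q^{\dsig;l}_\G$ for the reverse. You are in fact more careful than the paper about the trivial-walk case: the paper simply asserts $\reduc{\dsig;l}[(\alpha)] = (\alpha)$ ``by property P2,'' glossing over the very point you flag (the equality is only fully justified once the explicit Definition~\ref{defn:CycleReductionOperator} is in hand).
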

\begin{proof}
By definition, the image of $\reduc{\dsig;l}$ is
\begin{align*}
     \reduc{\dsig;l}\big[C_\G \cup T_\G \big] &= \reduc{\dsig;l}\big[C_\G\big] \cup \reduc{\dsig;l}\big[T_\G\big] \\
&= \reduc{\dsig;l}\big[C_\G\big] \cup T_\G,
\end{align*}
where the second equality follows from the fact that $\reduc{\dsig;l}[(\alpha)] = (\alpha)$ for any trivial walk $(\alpha)$ (see property P2 above). The proposition thus corresponds to the claim that
\begin{align*}
\reduc{\dsig;l}\big[C_\G\big] \cup T_\G = Q^{\dsig;l}_\G \cup T_\G.
\end{align*}
     To prove this statement it suffices to show the inclusions (a) $\reduc{\dsig;l}\big[C_\G\big] \subseteq Q^{\dsig;l}_\G \cup T_\G$ and (b) $Q^{\dsig;l}_\G \subseteq \reduc{\dsig;l}\big[C_\G\big] \cup T_\G$.
\begin{itemize}
       \item[(a)] Let $a$ be an element of $\reduc{\dsig;l}\big[C_\G\big]$, so that $a = \reduc{\dsig;l}\big[c\big]$ for some $c \in C_\G$. Since the codomain of $\reduc{\dsig;l}$ is $C_\G \cup T_\G$, there are two possibilities:
\begin{itemize}
             \item[(i)] $a \in T_\G$. Then it is clearly the case that $a\in Q^{\dsig;l}_\G \cup T_\G$.
             \item[(ii)] $a \in C_\G$. Since $a = \reduc{\dsig;l}[c]$ for some $c \in C_\G$ and $\reduc{\dsig;l}$ is idempotent (property P1 above) it follows that $\reduc{\dsig;l}[a] = a$. Then by the definition of $Q^{\dsig;l}_\G$ we have $a \in Q^{\dsig;l}_\G$, so that $a \in Q^{\dsig;l}_\G \cup T_\G$.
\end{itemize}
             \item[(b)] Let $b$ be an element of $Q^{\dsig;l}_\G$. Then by the definition of $Q^{\dsig;l}_\G$ we have that $b = \reduc{\dsig;l}[b]$ and $b \in C_\G$. Thus $b$ is an element of $\{\reduc{\dsig;l}[c] : c \in C_\G\} = \reduc{\dsig;l}[C_\G]$.
\end{itemize}
\end{proof}
\noindent The cycle reduction operator $\reduc{\dsig;l}$ is therefore a projector from $C_\G \cup T_\G$ to $Q^{\dsig;l}_\G \cup T_\G$.

\begin{cor}\label{cor:KLIrredCyclesSetMinus}
     The set of $(\dsig,l)$-irreducible cycles is given by $Q^{\dsig,l}_\G = \reduc{\dsig;l}[C_\G] \setminus T_\G$.
\end{cor}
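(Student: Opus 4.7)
The plan is to derive this corollary directly from Proposition \ref{prop:ImageOfrkl}, which establishes that $\reduc{\dsig;l}\big[C_\G \cup T_\G\big] = Q^{\dsig;l}_\G \cup T_\G$. Since the proof of Proposition \ref{prop:ImageOfrkl} already shows en route that $\reduc{\dsig;l}\big[C_\G \cup T_\G\big] = \reduc{\dsig;l}\big[C_\G\big] \cup T_\G$ (using that $\reduc{\dsig;l}$ fixes every trivial walk by property P2), we obtain the set identity
\begin{align*}
\reduc{\dsig;l}\big[C_\G\big] \cup T_\G = Q^{\dsig;l}_\G \cup T_\G.
\end{align*}
From here the corollary should follow by a one-line set-theoretic manipulation: remove $T_\G$ from both sides.

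First I would note that $T_\G$ and $C_\G$ are disjoint, as the convention adopted in \S\ref{subsec:WalksPathsCycles} explicitly excludes trivial walks from the set of cycles. Since $Q^{\dsig;l}_\G \subseteq C_\G$ by Definition \ref{defn:KLIrreducibleCycles}, this yields $Q^{\dsig;l}_\G \cap T_\G = \emptyset$, so $(Q^{\dsig;l}_\G \cup T_\G)\setminus T_\G = Q^{\dsig;l}_\G$. Applying the elementary identity $(A \cup T_\G)\setminus T_\G = A \setminus T_\G$ to the left-hand side of the displayed equation then gives
\begin{align*}
\reduc{\dsig;l}\big[C_\G\big] \setminus T_\G = Q^{\dsig;l}_\G,
\end{align*}
which is precisely the claim.

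There is essentially no obstacle; the content of the corollary is entirely packaged inside Proposition \ref{prop:ImageOfrkl}, and the only additional ingredient needed is the disjointness of $T_\G$ and $C_\G$, which follows immediately from our conventions. The corollary simply repackages the image of $\reduc{\dsig;l}$ in a form that isolates $Q^{\dsig;l}_\G$ on the right, which is the form that will be most convenient for the explicit description of $Q^{\dsig;l}_{\G;\alpha}$ carried out in \S\ref{subsec:StructureOfQKL}.
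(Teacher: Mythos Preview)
Your proposal is correct and follows essentially the same route as the paper: both invoke Proposition~\ref{prop:ImageOfrkl} to obtain $\reduc{\dsig;l}[C_\G] \cup T_\G = Q^{\dsig;l}_\G \cup T_\G$, observe that $Q^{\dsig;l}_\G \cap T_\G = \emptyset$ via $Q^{\dsig;l}_\G \subseteq C_\G$ and the convention that trivial walks are not cycles, and then strip $T_\G$ from both sides.
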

\begin{proof}
     By the definition of $Q^{\dsig,l}_\G$ (Defn.~\ref{defn:KLIrreducibleCycles}) we have that $Q^{\dsig,l}_\G \subseteq C_\G$, while from the convention that trivial walks are not cycles it follows that $C_\G\cap T_\G = \emptyset$. Hence $Q^{\dsig,l}_\G \cap T_\G = \emptyset$. From Proposition \ref{prop:ImageOfrkl} we further have that      $Q^{\dsig,l}_\G \cup T_\G = \reduc{\dsig;l}[C_\G] \cup T_\G$; thus $Q^{\dsig,l}_\G = \big(\reduc{\dsig;l}[C_\G] \cup T_\G\big) \setminus T_\G = \reduc{\dsig;l}[C_\G] \setminus T_\G$.
\end{proof}
     It follows from Corollary \ref{cor:KLIrredCyclesSetMinus} that the cycle dressing operator $\dress{\dsig;l}{\G}$ is a map from $(\dsig,l)$-irreducible cycles to sets of cycles: $\dress{\dsig;l}{\G} : Q^{\dsig,l}_\G \rightarrow \powset{C_\G}$.

\begin{lem}\label{lem:PartitionOfC}
For any dressing signature $\dsig$ and parameter $0 \le l \le D$, where $D$ is the depth of $\dsig$, the family of sets
\begin{align*}
    \big\{C^{[k_l,\ldots,k_{D_1},0]}_\G\big\} \cup \big\{\dress{\dsig;l}{\G}[q] : q \in Q^{\dsig;l}_\G\big\}
\end{align*}
       forms a decomposition of $C_\G$. Thus, every cycle $c \in C_\G$ is either $[k_l,\ldots,k_{D-1},0]$-structured, or can be produced by dressing a $(\dsig,l)$-irreducible cycle as prescribed by the dressing operator $\dress{\dsig;l}{\G}$.
\end{lem}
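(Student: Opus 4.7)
The plan is to establish that the claimed family is a partition of $C_\G$ --- i.e.\ it covers $C_\G$ and its members are pairwise disjoint --- by mimicking the proof of Theorem~\ref{thm:FamilyOfPartitions} and exploiting the three properties of $\reduc{\dsig;l}$ and $\dress{\dsig;l}{\G}$ laid down in Definition~\ref{defn:reductiondressingAxioms}. The overall structure is: (a) covering, handled by a direct case split on whether $\reduc{\dsig;l}[c]$ lands in $T_\G$ or not; (b) disjointness, split into the $C^{[k_l,\ldots,k_{D-1},0]}_\G$-versus-$\dress{\dsig;l}{\G}[q]$ case and the $\dress{\dsig;l}{\G}[q_1]$-versus-$\dress{\dsig;l}{\G}[q_2]$ case.

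For the covering, I would take an arbitrary $c \in C_\G$ and inspect $\reduc{\dsig;l}[c]$. Since cycles are not trivial walks, property P2 says that $\reduc{\dsig;l}[c] \in T_\G$ if and only if $c$ is $[k_l,\ldots,k_{D-1},0]$-structured. In that sub-case, $c$ belongs to $C^{[k_l,\ldots,k_{D-1},0]}_\G$ and we are done. Otherwise $q := \reduc{\dsig;l}[c]$ is an actual cycle, and idempotence (P1) yields $\reduc{\dsig;l}[q] = q$, so $q \in Q^{\dsig;l}_\G$ by Definition~\ref{defn:KLIrreducibleCycles}; then property P3 gives $c \in \dress{\dsig;l}{\G}[q]$.

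For disjointness I would first show that $C^{[k_l,\ldots,k_{D-1},0]}_\G \cap \dress{\dsig;l}{\G}[q] = \emptyset$ for every $q \in Q^{\dsig;l}_\G$. If some $c$ lay in both, P2 applied to the first membership would force $\reduc{\dsig;l}[c] \in T_\G$, while P3 applied to the second would give $\reduc{\dsig;l}[c] = q \in Q^{\dsig;l}_\G \subseteq C_\G$, contradicting $C_\G \cap T_\G = \emptyset$. Second, for distinct $q_1, q_2 \in Q^{\dsig;l}_\G$, any common $c \in \dress{\dsig;l}{\G}[q_1] \cap \dress{\dsig;l}{\G}[q_2]$ would by P3 satisfy $q_1 = \reduc{\dsig;l}[c] = q_2$, again a contradiction.

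The only real wrinkle compared with Theorem~\ref{thm:FamilyOfPartitions} is that $\reduc{\dsig;l}$ has the enlarged image $Q^{\dsig;l}_\G \cup T_\G$ (as established in Proposition~\ref{prop:ImageOfrkl}) rather than just $Q^{\dsig;l}_\G$; accordingly, one must peel off the ``trivial'' component and identify it with the class $C^{[k_l,\ldots,k_{D-1},0]}_\G$, which is precisely the content of property P2. Past this bookkeeping the argument is formally identical to the one for Theorem~\ref{thm:FamilyOfPartitions}, so I expect no substantive obstacle; the main care needed is simply to invoke the correct property (P1, P2 or P3) at each step and to remember that $C_\G \cap T_\G = \emptyset$ whenever the trivial-walk case is excluded.
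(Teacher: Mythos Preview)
Your proposal is correct and follows essentially the same approach as the paper: a case split on whether $\reduc{\dsig;l}[c]$ lands in $T_\G$ (invoking P2) or in $Q^{\dsig;l}_\G$ (invoking P1 for membership and P3 for the dressing), followed by the two disjointness checks via P2/P3 and $C_\G\cap T_\G=\emptyset$. The only cosmetic point is that the paper deliberately says ``decomposition'' rather than ``partition'' because $C^{[k_l,\ldots,k_{D-1},0]}_\G$ may be empty (e.g.\ a loopless graph with $l=D-1$, $k_{D-1}=1$); your covering and disjointness arguments are unaffected by this, but you should avoid the word ``partition'' in the write-up.
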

     Note that this family cannot be said to form a partition of $C_\G$ because of the possibility that $C^{[k_l,\ldots,k_{D-1},0]}_\G$ might be empty. This is the case when $\G$ contains no $[k_l,\ldots,k_{D-1},0]$-structured cycles: for example, a loopless graph contains no $[1,0]$-structured cycles.
\begin{proof}
     This claim consists of two statements: (i) that the elements of this family cover $C_\G$, and (ii) that the elements are pairwise disjoint. We address each of these statements in turn.
\begin{enumerate}
\item[(i)] We claim that
\begin{align}
C^{[k_l,\ldots,k_{D-1},0]}_\G \cup
       \bigcup_{q \in Q^{\dsig;l}_\G} \dress{\dsig;l}{\G}[q] = C_\G. \label{eqn:CycleDecompProof}
\end{align}
     Let us assume the opposite: there exists a cycle $c$ in $C_\G$ that does not appear in any of the sets on the left-hand side. Consider the result of applying $\reduc{\dsig;l}$ to $c$. Two possibilities exist:
\begin{enumerate}
       \item[(a)] $\reduc{\dsig;l}[c] \in T_\G$. Then by property P2 above, $c$ is a $[k_l,\ldots,k_{D-1},0]$-structured cycle, and so appears in $C^{[k_l,\ldots,k_{D-1},0]}_\G$ on the left-hand side of Eq.~\eqref{eqn:CycleDecompProof}.
       \item[(b)] $\reduc{\dsig;l}[c] \in Q^{\dsig;l}_\G$. Then $\reduc{\dsig;l}[c]$ appears in the union over $Q^{\dsig;l}_\G$ in Eq.~\eqref{eqn:CycleDecompProof}. Further, by property P3 above, $c$ is in $\dress{\dsig;l}{\G}[\reduc{\dsig;l}[c]]$, and so appears on the left-hand side of Eq.~\eqref{eqn:CycleDecompProof}.\\
\end{enumerate}

       \item[(ii)] To show that the members of the family are pairwise disjoint we require that (a) $C^{[k_l,\ldots,k_{D-1},0]}_\G \cap \dress{\dsig;l}{\G}[q] = \emptyset$ for $q \in Q^{\dsig;l}_\G$, and (b) $\dress{\dsig;l}{\G}[q_1] \cap \dress{\dsig;l}{\G}[q_2] = \emptyset$ for $q_1, q_2 \in Q^{\dsig;l}_\G$ and $q_1 \neq q_2$. We prove each of these in turn. The proof in each case follows           straightforwardly from the properties given in Definition \ref{defn:reductiondressingAxioms}.
\begin{enumerate}
         \item[(a)] Assume the opposite: that there exists a cycle $c$ such that $c \in C^{[k_l,\ldots,k_{D-1},0]}_\G$ and $c \in \dress{\dsig;l}{\G}[q]$ for some $q \in Q^{\dsig;l}_\G$. Then by P2 we have that $\reduc{\dsig;l}[c] \in T_\G$, on one hand, and $\reduc{\dsig;l}[c] = q$, on the other. However, as noted in the proof to Corollary \ref{cor:KLIrredCyclesSetMinus} above, $Q^{\dsig;l}_\G$ and $T_\G$ are disjoint: thus no such cycle $c$ can exist.
         \item[(b)] Assume the opposite: that there exists a cycle $c$ such that $c \in \dress{\dsig;l}{\G}[q_1]$ and $c \in \dress{\dsig;l}{\G}[q_2]$ for some $q_1,q_2 \in Q^{\dsig;l}_\G$ with $q_1 \neq q_2$. Then by P3 we have that $\reduc{\dsig;l}[c] = q_1$ and $\reduc{\dsig;l}[c] = q_2$: a contradiction. Thus no such cycle exists.
\end{enumerate}
\end{enumerate}
\end{proof}

\subsection{The cycle reduction operators}\label{sec:CycleReductionOperators}
     We now give an explicit definition for the cycle reduction operators $\reduc{\dsig;l}$ that satisfies the key properties given in Definition \ref{defn:reductiondressingAxioms}: namely, that $\reduc{\dsig;l}$ is idempotent, and that $\reduc{\dsig;l}[c]$ is equal to the trivial walk $(h(c))$ if and only if $c = (h(c))$ or $c$ is a $[k_l,\ldots,k_{D-1},0]$-structured cycle. We prove these claims later in this section.

\begin{definition}[Cycle reduction operators]\label{defn:CycleReductionOperator}
     Let $\G$ be a digraph, $\dsig$ be a dressing signature of depth $D$, and $0 \le l \le D$ be a parameter. Then the cycle reduction operator $\reduc{\dsig;l}$ is a map on the set of all cycles and trivial walks on $\G$:
\begin{align*}
\reduc{\dsig;l} : C_{\G} \cup T_\G \rightarrow C_{\G} \cup T_\G.
\end{align*}
     For any trivial walk $(\alpha)$, we define $\reduc{\dsig;l}\big[(\alpha)\big] = (\alpha)$ for every $0\le l \le D$. For any cycle $c$ we define $\reduc{\dsig;l}\big[c\big]$ as follows. Let $c$ have decomposition (cf.~Eq.~\ref{eqn:CyclePrimeFactorisation})
\begin{align*}
       c = \alpha\mu_2\cdots\mu_m\alpha \nest \left[\Nest_{j=1}^{N_m} c_{m,j}\right]\nest \cdots \nest \left[\Nest_{j=1}^{N_2} c_{2,j}\right],
\end{align*}
     where $\alpha\mu_2\cdots\mu_m\alpha$ is a simple cycle of length $m \ge 1$ off $\alpha$ on $\G$, $N_i \ge 0$ is the number of child cycles off $\mu_i$, and $c_{i,j}\in C_{\G\backslash\alpha\cdots\mu_{i-1};\mu_i}$ for $2 \le i \le m$ and $1 \le j \le N_i$. If and only if $m \le k_l$, then we introduce indices $s_2,\ldots,s_m$, where $s_i$ is the index of the first child off $\mu_i$ (counting from the left of the hedge) such that $\reduc{\dsig;l+1}[c_{i,s_i}] \neq (\mu_i)$. In other words, $c_{i,s_i}$ is the first child off $\mu_i$ that is not $[k_{l+1},\ldots,k_{D-1},0]$-structured. If no such child exists (i.e.~if there are no children off $\mu_i$, or if all children off $\mu_i$ satisfy $\reduc{\dsig;l+1}[c_{i,s_i}] = (\mu_i)$) then we set $s_i = N_i + 1$. Then the cycle reduction operator $\reduc{\dsig;\,l}$ maps $c$ to
\begin{subequations}
\begin{align}
\reduc{\dsig;\,l}\big[c\big] =
\begin{cases}
\left(\alpha\right) & \begin{aligned}
&\text{if $m \le k_l$ and every}\\[1mm]
&\text{$s_i = N_i + 1$}
\end{aligned}\\[1ex]
\!\begin{aligned}[b]
\alpha\mu_2\cdots\mu_m \alpha
\nest \left[\Nest_{j=1}^{N_m}\reduc{\dsig;\,l_{m,j}}\big[c_{m,j}\big]\right]
\nest \cdots
\nest \left[\Nest_{j=1}^{N_2}\reduc{\dsig;\,l_{2,j}}\big[c_{2,j}\big]\right]
\end{aligned}           & \text{else.}
\end{cases}\nonumber
\end{align}
where
\begin{align}
l_{i,j} = \begin{cases}
l + 1 & \text{if $m \le k_l$ and $j \le s_i$},\\
0 & \text{else,}
\end{cases}\nonumber
\end{align}
\end{subequations}
\end{definition}


It is worth commenting here on the role of the indices $s_i$. They are included to produce the desired behaviour for $\reduc{\dsig;l}$: namely, that $\reduc{\dsig;l}$ should map any $[k_l,\ldots,k_{D-1},0]$-structured cycle $c$ to $(h(c))$. Since $c$ can be $[k_l,\ldots,k_{D-1},0]$-structured only if every child cycle $c_{i,j}$ is $[k_{l+1},\ldots,k_{D-1},0]$-structured, $\reduc{\dsig;l}[c]$ must depend on $\reduc{\dsig;l+1}[c_{i,j}]$ for every child $c_{i,j}$. This dependence is achieved via the indices $s_i$.

The indices $s_i$ are evaluated if and only if the base cycle $s$ of $c$ is not longer than $k_l$. Then $s_i$ is the index of the first child cycle of $\mu_i$ that is not $[k_{l+1},\ldots,k_{D-1},0]$-structured, or $N_i + 1$ if no such child exists. Thus if and only if every $s_i$ is equal to $N_i+1$, all of the children off all of the vertices of $\alpha\mu_2\cdots\mu_m\alpha$ are $[k_{l+1},\ldots,k_{D-1},0]$-structured. Then $c$ is $[k_{l},\ldots,k_{D-1},0]$-structured, and is consequently mapped by $\reduc{\dsig;l}$ to $(h(c))$.

Note that the value of the parameter $l_{i,j}$, which specifies which cycle reduction operator is applied to the child $c_{i,j}$, is precisely the local depth of the root node of $c_{i,j}$. This ensures that when $\reduc{\dsig;l}$ is applied to $c$, every child cycle $c_{i,j}$ is mapped to $r^{\dsig;l'}[c_{i,j}]$, where $l'$ is the local depth of the root node of $c_{i,j}$.

The effect of applying $\reduc{\dsig;l}$ to a cycle $c$ rooted at local depth $l$ can be summarised as follows. There are two possible cases. Firstly, if the base cycle $s$ of $c$ is longer than $k_l$, every child of $s$ has local depth 0. In this case $\reduc{\dsig;0}$ is applied to every child cycle of $s$. Secondly, if $s$ is not longer than $k_l$, the first $\min(s_i,N_i)$ children of $s$ have local depth $l+1$, and any remaining children in the same hedge have local depth 0. Consequently, $\reduc{\dsig;l+1}$ is applied to the first $\min(s_i,N_i)$ children off $\mu_i$, and $\reduc{\dsig;0}$ to any remaining children off $\mu_i$.

In the remainder of this section we prove the two claims we have made about $\reduc{\dsig;l}$. Firstly, we show that $\reduc{\dsig;l}$ is idempotent for every $\dsig$ and $l$. Secondly, we prove that $\reduc{\dsig;l}$ maps a cycle $c$ to $(h(c))$ if and only if $c$ is $[k_l,\ldots,k_{D-1},0]$-structured.

\begin{prop}\label{prop:IdempotenceOfr}
The cycle reduction operators $\reduc{\dsig;l}$ are idempotent.
\end{prop}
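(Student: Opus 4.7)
The plan is to proceed by strong induction on the length of $c$, exploiting the fact that $\reduc{\dsig;l}$ is defined recursively in terms of $\reduc{\dsig;l'}$ applied to the child cycles $c_{i,j}$, which are strictly shorter than $c$. The base of the induction covers trivial walks (where $\reduc{\dsig;l}[(\alpha)] = (\alpha)$ by definition) and simple cycles with no children: for a simple cycle, Case A of Definition \ref{defn:CycleReductionOperator} either maps $c$ to the trivial walk $(\alpha)$ (which is then fixed), or Case B maps $c$ to itself since there are no children to dress with further reductions.

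For the inductive step, fix a cycle $c$ with base cycle $\alpha\mu_2\cdots\mu_m\alpha$ and child cycles $\{c_{i,j}\}$, and set $c' = \reduc{\dsig;l}[c]$. If $c$ falls into Case A, then $c' = (\alpha)$ and idempotence is immediate. Otherwise $c'$ has the same base cycle as $c$, so the condition $m \le k_l$ is identical for $c$ and $c'$, and the case distinction between Case A and Case B on the second pass is decided by the same length test. The nontrivial task is to understand the child structure of $c'$: by the definition of $s_i$, the first $s_i-1$ entries among $\{\reduc{\dsig;l_{i,j}}[c_{i,j}]\}_j$ equal the trivial walk $(\mu_i)$ and therefore drop out of the nesting product, leaving $\reduc{\dsig;l+1}[c_{i,s_i}]$ (when $s_i \le N_i$) as the first child of $\mu_i$ in $c'$, followed by the remaining images $\reduc{\dsig;0}[c_{i,j}]$ for $j > s_i$ that do not themselves reduce to $(\mu_i)$.

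The crux is then to verify, using the inductive hypothesis on each $c_{i,j}$, that $\reduc{\dsig;l}[c'] = c'$. The key observation is that the new index $s_i'$ for $c'$ equals $1$ whenever $\mu_i$ has any child in $c'$: the first such child is $\reduc{\dsig;l+1}[c_{i,s_i}]$, which by the inductive hypothesis is a fixed point of $\reduc{\dsig;l+1}$, hence different from $(\mu_i)$. The parameters $l_{i,j}'$ on the second pass therefore reproduce exactly the operators that already produced $c'$: $\reduc{\dsig;l+1}$ is applied to the first child off $\mu_i$ and $\reduc{\dsig;0}$ to every subsequent child, and in each case idempotence on the child cycle (inductive hypothesis) guarantees that the child is left unchanged.

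The main obstacle I expect is bookkeeping rather than any deep difficulty: the children of $c'$ are not in one-to-one correspondence with those of $c$ (some collapse to trivial walks and vanish from the nesting product), so one has to verify carefully that this reindexing does not interact badly with the definition of $s_i'$, and that Sub-case 1a ($m > k_l$) and Sub-case 1b ($m \le k_l$ with some $s_i \le N_i$) are each stable under $\reduc{\dsig;l}$. In particular, the indices $i$ for which $s_i = N_i + 1$ in $c$ contribute no children at all to $c'$ and must be handled separately; once this is done, the idempotence of $\reduc{\dsig;l}$ on $c$ follows from the idempotence of $\reduc{\dsig;l'}$ on each proper sub-cycle.
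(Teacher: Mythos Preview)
Your proposal is correct and follows essentially the same route as the paper: strong induction on the length of $c$, a case split on whether the base cycle has length $>k_l$ or $\le k_l$, and in the latter case the key observation that the first surviving child off each $\mu_i$ in $c'$ is $\reduc{\dsig;l+1}[c_{i,s_i}]$, which the inductive hypothesis shows is a nontrivial fixed point of $\reduc{\dsig;l+1}$, forcing $s_i'=1$ on the second pass. The paper's writeup differs only cosmetically (its base case is length-$1$ loops rather than arbitrary childless simple cycles, and it makes the dependency structure $\mathbb{P}(n,l)\Leftarrow\mathbb{P}(m,0),\mathbb{P}(m,l{+}1)$ for $m<n$ explicit), but the argument is the same.
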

\begin{proof}
     We first note that from the definition of $\reduc{\dsig;l}$ we have $\reduc{\dsig;l}\big[\reduc{\dsig;l}[(\alpha)]\big] = \reduc{\dsig;l}\big[(\alpha)\big] = (\alpha)$
     for any trivial walk $(\alpha)$ and any $0 \le l \le D$. It thus remains only to prove that $\reduc{\dsig;l}$ is idempotent for any cycle $c \in C_\G$. To this end, let $\mathbb{P}(n,l)$ for $n \ge 1$ and $0 \le l \le D$ be the statement
\begin{align}
      \reduc{\dsig;l}\big[\reduc{\dsig;l}[c]\big] = \reduc{\dsig;l}\big[c\big],\label{eq:IdempotenceOfrStatement}
\end{align}
     where $n$ is the length of $c$. We prove this statement for all $n$ and $l$ by induction on $n$. For a given $n$, the statements required to prove $\mathbb{P}(n,l)$ depend on $l$. Namely, $\mathbb{P}(n,l)$ for any $0 \le l \le D-1$ requires both $\mathbb{P}(m,0)$ and $\mathbb{P}(m,l+1)$ for all $m < n$, while $\mathbb{P}(n,D)$ only requires $\mathbb{P}(m,0)$ for all $m < n$. \\

\noindent\textit{Inductive step.}
We wish to prove the statements
\begin{align*}
\mathbb{P}(m,0) \text{ for all $m < n$ } &\implies \mathbb{P}(n,D),\\
         \mathbb{P}(m,l+1) \text{ and } \mathbb{P}(m,0) \text{ for all $m < n$ } &\implies \mathbb{P}(n,l) \text{ for $0 \le l \le D-1$}.
\end{align*}
     We proceed by explicitly evaluating $\reduc{\dsig;l}\big[c\big]$ and $\reduc{\dsig;l}\big[\reduc{\dsig;l}[c]\big]$ and showing that they are equal. Let $c$ have decomposition
\begin{align*}
c = \alpha\mu_2\cdots\mu_L\alpha
\nest \left[\Nest_{j=1}^{N_L}c_{L,j}\right]
\nest \cdots
\nest \left[\Nest_{j=1}^{N_2}c_{2,j}\right],
\end{align*}
where $1 \le L \le n$. Then two possibilities exist:
\begin{itemize}
       \item[1)] $L > k_l$. Note that is necessarily the case if $l=D$, since $k_D = 0$. Then we see from the definition of $\reduc{\dsig;l}$ that all of the parameters $l_{i,j}$ are zero, so that $\reduc{\dsig;0}$ is applied to every child cycle $c_{i,j}$. We find
\begin{align*}
\reduc{\dsig;l}\big[c\big] &= \alpha\mu_2\cdots\mu_L\alpha
\nest\left[\Nest_{j=1}^{N_L}\reduc{\dsig;0}\big[c_{L,j}\big]\right]
\nest \cdots
\nest \left[\Nest_{j=1}^{N_2}\reduc{\dsig;0}\big[c_{2,j}\big]\right]\\
&= \alpha\mu_2\cdots\mu_L\alpha
\nest\left[\Nest_{k=1}^{M_L}\reduc{\dsig;0}\big[c_{L,t_{L,k}}\big]\right]
\nest \cdots
\nest \left[\Nest_{k=1}^{M_2}\reduc{\dsig;0}\big[c_{2,t_{2,k}}\big]\right]
\end{align*}
       where $0 \le M_i \le N_i$ is the number of cycles among $c_{i,1},\ldots,c_{i,N_i}$ that are not reduced to $(\mu_i)$ by $\reduc{\dsig;0}$, and $t_{i,k}$ are their indices, which we assume without loss of generality satisfy $1 \le t_{i,1} < \cdots < t_{i,M_i} \le N_i$. Now
\begin{align*}
\reduc{\dsig;l}\big[\reduc{\dsig;l}[c]\big] &= \alpha\mu_2\cdots\mu_L\alpha
\nest\left[\Nest_{k=1}^{M_L}\reduc{\dsig;0}\big[\reduc{\dsig;0}[c_{L,t_{L,k}}]\big]\right]
\nest \cdots
             \nest \left[\Nest_{k=1}^{M_2}\reduc{\dsig;0}\big[\reduc{\dsig;0}[c_{2,t_{2,k}}]\big]\right]\\
&= \alpha\mu_2\cdots\mu_L\alpha
\nest\left[\Nest_{k=1}^{M_L}\reduc{\dsig;0}\big[c_{L,t_{L,k}}\big]\right]
\nest \cdots
\nest \left[\Nest_{k=1}^{M_2}\reduc{\dsig;0}\big[c_{2,t_{2,k}}\big]\right]\\
&= \reduc{\dsig;l}\big[c\big],
\end{align*}
     where the second equality follows from the induction hypotheses $\mathbb{P}(m_{i,k},0)$ for $2\le i \le L$ and $1\le k \le M_i$, where $m_{i,k} < n$ is the length of the cycle $c_{i,t_{i,k}}$.\\

     \item[2)] $L \le k_l$. Then the parameters $l_{i,j}$ depend on the indices $s_2,\ldots, s_L$. Specifically, $\reduc{\dsig;l+1}$ is applied to the first $\min(s_i,N_i)$ child cycles (counting from the left), and $\reduc{\dsig;0}$ is applied to any remaining child cycles. Two possibilities exist:
\begin{itemize}
           \item[a)] $s_i = N_i + 1$ for every $2 \le i \le L$; i.e.~every $c_{i,j}$ satisfies $\reduc{\dsig;l+1}[c_{i,j}] = (\mu_i)$. Then $\reduc{\dsig;l}\big[c\big] = (\alpha)$, and $\reduc{\dsig;l}\big[\reduc{\dsig;l}[c]\big] = \reduc{\dsig;l}\big[(\alpha)\big] = (\alpha)$.

           \item[b)] At least one of the indices $s_2,\ldots,s_L$ is not equal to $N_i + 1$. Let $\mathcal{I} = \{i_1,\ldots,i_p\}$ be the index set of those indices, so that $s_i \neq N_i+1$ if and only if $i \in \mathcal{I}$. Then
\begin{align*}
\reduc{\dsig;l}\big[c\big]
&=
\begin{aligned}[t]
\alpha\mu_2\cdots\mu_L\alpha
\nest \left[\Nest_{j=1}^{N_L}\reduc{\dsig;l_{L,j}}\big[c_{L,j}\big]\right]
\nest \cdots
\nest \left[\Nest_{j=1}^{N_2}\reduc{\dsig;l_{2,j}}\big[c_{2,j}\big]\right]\\
\end{aligned}\\
&=
\begin{aligned}[t]
\alpha\mu_2\cdots\mu_L\alpha
                 &\nest \reduc{\dsig;l+1}\big[c_{i_p,s_{i_p}}\big] \nest \left[\Nest_{j={s_{i_p}+1}}^{N_{i_p}} \reduc{\dsig;0}\big[c_{i_p,j}\big]\right]\\
&\nest \cdots\\
                 &\nest \reduc{\dsig;l+1}\big[c_{i_1,s_{i_1}}\big] \nest \left[\Nest_{j={s_{i_1}+1}}^{N_{i_1}} \reduc{\dsig;0}\big[c_{i_1,j}\big]\right]
\end{aligned}
\end{align*}
     where the second equality is obtained on noting that it follows from the definition of $s_i$ that the first $s_i - 1$ child cycles off each vertex $\mu_i$ are mapped to $(\mu_i)$ by $\reduc{\dsig;l+1}$. Since trivial walks do not contribute to the nesting product, they can safely be deleted.  Thus, all cycles off $\mu_i$ (for $i \notin \mathcal{I}$) and the first $s_i -1$ cycles off $\mu_i$ (for $i \in \mathcal{I}$) disappear. Next we follow an analogous procedure to that outlined in case 1) above. Let $M_1$ be the number of the $N_{i_1} - s_{i_1}$ cycles $c_{i_1,s_{i_1}+1},\ldots, c_{i_1,N_{i_1}}$ off $\mu_{i_1}$ that are not mapped to $(\mu_{i_1})$ by $\reduc{\dsig;0}$, and let $s_{i_1}+1 \le t_{1,1} < \cdots < t_{1,M_1} \le N_{i_1}$ be the indices of these cycles. Introducing analogous quantities for $i_2,\ldots,i_p$, we arrive at the final expression for $\reduc{\dsig;l}\big[c\big]$:
\begin{equation}\label{eqn:rIdempotence:oneReduc}
\reduc{\dsig;l}\big[c\big]
=
\begin{aligned}[t]
\alpha\mu_2\cdots\mu_L\alpha
                 &\nest \reduc{\dsig;l+1}\big[c_{i_p,s_{i_p}}\big] \nest \left[\Nest_{k=1}^{M_p} \reduc{\dsig;0}\big[c_{i_p,t_{p,k}}\big]\right]\\
&\nest \cdots\\
                 &\nest \reduc{\dsig;l+1}\big[c_{i_1,s_{i_1}}\big] \nest \left[\Nest_{k=1}^{M_1} \reduc{\dsig;0}\big[c_{i_1,t_{1,k}}\big]\right].
\end{aligned}
\end{equation}
     We now wish to apply $\reduc{\dsig;l}$ to this result. Since the base cycle of $\reduc{\dsig;l}[c]$ has length $L \le k_l$, we first need to evaluate the indices $s_1',\ldots,s_p'$, where $s_1'$ is the index of the first cycle among the children $\reduc{\dsig;l+1}\big[c_{i_1,s_{i_1}}\big]$, $\reduc{\dsig;0}\big[c_{i_1,t_{1,1}}\big]$, \ldots, $\reduc{\dsig;0}\big[c_{i_1,t_{1,M_1}}\big]$ that is not mapped to $(\mu_{i_1})$ by $\reduc{\dsig;l+1}$. We claim that $s_1' = 1$. The proof of this is straightforward: by the definition of $s_{i_1}$, we know that $c_{i_1,s_{i_1}}$ satisfies $\reduc{\dsig;l+1}[c_{i_1,s_{i_1}}] \neq (\mu_{i_1})$. Further, from the induction hypothesis $\mathbb{P}(m_{1},l+1)$, where $m_{1} < n$ is the length of $c_{i_1,s_{i_1}}$, we have $\reduc{\dsig;l+1}\big[ \reduc{\dsig;l+1}[c_{i_1,s_{i_1}}]\big] = \reduc{\dsig;l+1}[c_{i_1,s_{i_1}}]$. Thus $\reduc{\dsig;l+1}\big[ \reduc{\dsig;l+1}[c_{i_1,s_{i_1}}]\big] \neq (\mu_{i_1})$, which proves our claim that $s_1' = 1$. An analogous argument shows that $s_2' = \cdots = s_p' = 1$. Thus, applying $\reduc{\dsig;l}$ to Eq.~\eqref{eqn:rIdempotence:oneReduc} yields
\begin{align*}
\reduc{\dsig;l}\big[\reduc{\dsig;l}[c]\big]
&=
\begin{aligned}[t]
\alpha\mu_2\cdots\mu_L\alpha
                 &\nest \reduc{\dsig;l+1}\big[\reduc{\dsig;l+1}[c_{i_p,s_{i_p}}]\big] \nest \left[\Nest_{k=1}^{M_p} \reduc{\dsig;0}\big[\reduc{\dsig;0}[c_{i_p,t_{p,k}}]\big]\right]\\
&\nest \cdots\\
                 &\nest \reduc{\dsig;l+1}\big[\reduc{\dsig;l+1}[c_{i_1,s_{i_1}}]\big] \nest \left[\Nest_{k=1}^{M_1} \reduc{\dsig;0}\big[\reduc{\dsig;0}[c_{i_1,t_{1,k}}]\big]\right]
\end{aligned}\\
&=  \reduc{\dsig;l}\big[c\big]
\end{align*}
     where the second equality follows on noting that the right-hand side of Eq.~\eqref{eqn:rIdempotence:oneReduc} is recovered immediately upon using the induction hypotheses $\mathbb{P}(m_{k,j},0)$ for $1 \le j \le M_k$ and $1 \le k \le p$, where $m_{k,j} < n$ is the length of $c_{i_k,t_{k,j}}$, and $\mathbb{P}(m_{k},l+1)$ for $1 \le k \le p$, where $m_k < n$ is the length of $c_{i_k,s_{i_k}}$.\\
\end{itemize}
\end{itemize}

\noindent\textit{Base case.}
     We wish to prove that $\mathbb{P}(1,l)$ holds for all $0 \le l \le D$. Since the only cycle of length 1 is the loop $\alpha\alpha$, this corresponds to the claim that
\begin{align*}
       \reduc{\dsig;l}\big[\reduc{\dsig;l}[\alpha\alpha]\big] = \reduc{\dsig;l}\big[\alpha\alpha\big].
\end{align*}
We identify two cases:
\begin{itemize}
       \item[1)] $ 0 \le l \le D-1$. Then $k_l \ge 1$, so $\reduc{\dsig;l}[\alpha\alpha] = (\alpha)$ and $\reduc{\dsig;l}[\reduc{\dsig;l}[\alpha\alpha]] = \reduc{\dsig;l}[(\alpha)] = (\alpha)$.
       \item[2)] $l = D$. Since $k_D = 0$, $\reduc{\dsig;D}[\alpha\alpha] = \alpha\alpha$, and $\reduc{\dsig;D}\big[\reduc{\dsig;D}[\alpha\alpha]\big] = \reduc{\dsig;D}[\alpha\alpha] = \alpha\alpha$.
\end{itemize}
\end{proof}

We now prove the claim that $\reduc{\dsig;l}$ maps a cycle $c$ to the trivial walk $(h(c))$ if and only if $c$ is $[k_l,\ldots,k_{D-1},0]$-structured. We begin by defining the set of all $\dsig$-structured cycles on $\G$.

\begin{definition}[The set of $\dsig$-structured cycles]\label{defn:CkCycleSets}
     Recall from Definition \ref{defn:KStructuredCycles} that, given a dressing signature $\dsig = [k_0,k_1,\ldots,k_{D-1},0]$, a cycle $c$ is $\dsig$-structured if and only if the base cycle of $c$ has length no greater than $k_0$ and any children of $c$ are $[k_1,\ldots,k_{D-1},0]$-structured. It follows that the set of all $\dsig$-structured cycles off $\alpha$ on $\G$, denoted by $C_{\G;\alpha}^\dsig$, is given by
\begin{align*}
C^{\dsig}_{\G;\alpha} = \begin{cases}
\emptyset & \text{if $D = 0$,}\\
\bigcup_{L=1}^{k_0} \limits S_{\G;\alpha;L}
\nest C_{\G\backslash\alpha\mu_2\cdots\mu_{L-1};\mu_L}^{[k_1,\ldots,k_{D-1},0]\:*}
\nest \cdots
\nest C_{\G\backslash\alpha;\mu_2}^{[k_1,\ldots,k_{D-1},0]\:*}&
\text{otherwise,}
\end{cases}
\end{align*}
     where $D$ is the depth of $\dsig$, $\alpha\mu_2\cdots\mu_L\alpha \in S_{\G;\alpha;L}$ is a simple cycle of length $L$ off $\alpha$ on $\G$, and $[k_1,\ldots,k_{D-1},0]$ is the dressing signature obtained by deleting the first element of $\dsig$. Further, the set of all $\dsig$-structured cycles on $\G$ is given by
\begin{align*}
C_\G^\dsig = \bigcup_{\alpha\,\in\,\G} C_{\G;\alpha}^\dsig.
\end{align*}
\end{definition}

\begin{prop}\label{prop:TrivialEquivalentCycles}
       Let $c$ be a cycle off $\alpha$ on a digraph $\G$, and $\dsig = [k_0,\ldots,k_{D-1},0]$ be a dressing signature of depth $D$. Then $\reduc{\dsig;l}\big[c\big] = (\alpha)$ if and only if $c \in C_{\G;\alpha}^{[k_l,\ldots,k_{D-1},0]}$, for $0 \le l \le D$.
\end{prop}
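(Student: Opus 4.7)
The plan is to prove both directions of the biconditional by strong induction on the length $n$ of the cycle $c$, with the inductive hypothesis ranging over all values of the parameter $l$ simultaneously. The recursive structure of both $\reduc{\dsig;l}$ (from Definition \ref{defn:CycleReductionOperator}) and $C_{\G;\alpha}^\dsig$ (from Definition \ref{defn:CkCycleSets}) matches up one-for-one, so after unpacking definitions the inductive step is essentially bookkeeping.

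For the base case $n = 1$, the only cycle off $\alpha$ is the loop $\alpha\alpha$, which has no children. If $l = D$ then $k_l = 0$, so the length condition $m \le k_l$ fails, and $\reduc{\dsig;D}[\alpha\alpha] = \alpha\alpha \neq (\alpha)$; correspondingly $C_{\G;\alpha}^{[0]} = \emptyset$ by Definition \ref{defn:CkCycleSets}, so both sides of the biconditional are false. If instead $l < D$, then $k_l \ge 1$ so $m = 1 \le k_l$, and since there are no children the condition ``every $s_i = N_i + 1$'' holds vacuously; hence $\reduc{\dsig;l}[\alpha\alpha] = (\alpha)$, while on the other side the loop is manifestly $[k_l,\ldots,k_{D-1},0]$-structured. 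Both sides true.

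For the inductive step, assume the statement holds for all cycles of length strictly less than $n$ and all $0 \le l \le D$, and let $c = \alpha\mu_2\cdots\mu_m\alpha \nest \bigl[\Nest_{j=1}^{N_m} c_{m,j}\bigr] \nest \cdots \nest \bigl[\Nest_{j=1}^{N_2} c_{2,j}\bigr]$. For the forward direction, suppose $\reduc{\dsig;l}[c] = (\alpha)$. Inspecting Definition \ref{defn:CycleReductionOperator}, this output is only produced in the first branch, which requires both $m \le k_l$ (forcing $l < D$ since $k_D = 0$) and $s_i = N_i + 1$ for all $i$; the latter means $\reduc{\dsig;l+1}[c_{i,j}] = (\mu_i)$ for every child. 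Each $c_{i,j}$ has length less than $n$, so the inductive hypothesis at parameter $l+1$ yields $c_{i,j} \in C_{\G\backslash\alpha\cdots\mu_{i-1};\mu_i}^{[k_{l+1},\ldots,k_{D-1},0]}$. Combined with $m \le k_l$, Definition \ref{defn:CkCycleSets} gives $c \in C_{\G;\alpha}^{[k_l,\ldots,k_{D-1},0]}$. The reverse direction runs the same chain backwards: if $c \in C_{\G;\alpha}^{[k_l,\ldots,k_{D-1},0]}$ then this set is nonempty so $l < D$, the base cycle satisfies $m \le k_l$, and every child is $[k_{l+1},\ldots,k_{D-1},0]$-structured; by the inductive hypothesis each child reduces to the appropriate trivial walk under $\reduc{\dsig;l+1}$, forcing every $s_i = N_i+1$ and placing $c$ in the first branch of the definition, yielding $\reduc{\dsig;l}[c] = (\alpha)$.

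The main obstacle, such as it is, lies in correctly tracking the interplay between the parameter $l$ and the trailing subsequence $[k_l,\ldots,k_{D-1},0]$ of the dressing signature: the induction must be stated uniformly in $l$ so that when we descend into children we may freely invoke the hypothesis at parameter $l+1$. Once this is set up, the equivalence of the two recursive descriptions is almost tautological, and the only subtle point is handling the edge case $l = D$, where $k_l = 0$ kills the length condition and simultaneously renders the target cycle set empty, so both sides are automatically false.
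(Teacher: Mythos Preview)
Your proof is correct, but it runs a different induction from the paper's. The paper inducts \emph{downward on the parameter $l$}, from $l = D-1$ to $l = 0$: since every recursive call of $\reduc{\dsig;\cdot}$ on a child cycle increments $l$ by one, the finite depth $D$ of the dressing signature guarantees termination, and the base case $l = D-1$ is handled by observing that $\reduc{\dsig;D}$ never maps a cycle to a trivial walk (so no children can be present). You instead induct on the \emph{length $n$ of $c$}, using that each child is strictly shorter than the parent. Both schemes are valid because the recursion decreases along two independent well-founded measures simultaneously. The paper's choice is arguably more intrinsic to the structure of dressing signatures (it directly tracks the finite chain $[k_l,\ldots,0] \to [k_{l+1},\ldots,0] \to \cdots \to [0]$), whereas your length-based induction is a more generic size argument; in return, yours handles the $l = D$ edge case uniformly within the base and inductive steps rather than splitting it off at the outset.
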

\begin{proof}
       Note firstly that $C_{\G;\alpha}^{[0]}$ is empty, and so the statement is vacuously true for $l = D$. It remains to prove that
\begin{align*}
\reduc{\dsig;l}\big[c\big] = (\alpha) \iff c \in C_{\G;\alpha}^{[k_l,\ldots,k_{d-1},0]}
\end{align*}
       for $l = 0,1,\ldots, D-1$. We prove the forward and backward directions separately. Before beginning the proof proper we note that it follows from Definition \ref{defn:CkCycleSets} that a cycle is an element of $C_{\G;\alpha}^{[k_l,\ldots,k_{D-1},0]}$ for $0 \le l \le D-1$ if and only if its decomposition is of the form
\begin{subequations}\label{eqn:Prop:FormOfDecomp}
\begin{align}
\alpha\mu_2\cdots\mu_L\alpha
\nest \bigg[\Nest_{j=1}^{N_L}c_{L,j}\bigg]
\nest \cdots
\nest \bigg[\Nest_{j=1}^{N_2}c_{2,j}\bigg]
\end{align}
where $\alpha\mu_2\cdots\mu_L\alpha$ is a simple cycle of length $L \le k_l$ off $\alpha$, $N_i \ge 0$ is the number of child cycles off $\mu_i$, and
\begin{align}
c_{i,j} \in C_{\G\backslash\alpha\mu_2\cdots\mu_{i-1};\mu_i}^{[k_{l+1},\ldots,k_{D-1},0]}
\end{align}
for $2\le i \le L$ and $1 \le j \le N_i$.
\end{subequations}\\

\noindent\textbf{Forward.} For $0 \le l \le D-1$, let $\mathbb{P}(l)$  be the statement
\begin{align*}
         \reduc{\dsig;l}\big[c\big] = (\alpha) \implies c \in C_{\G;\alpha}^{[k_l,\ldots,k_{d-1},0]}.
\end{align*}
       We will prove this by showing that the equality $\reduc{\dsig;l}\big[c\big] = (\alpha)$ implies that $c$ has the structure described in Eq.~\eqref{eqn:Prop:FormOfDecomp}, so that $c$ is an element of $C_{\G;\alpha}^{[k_l,\ldots,k_{d-1},0]}$. The proof proceeds by induction on the parameter $l$, starting at $l = D-1$ and working downwards to $l = 0$. Within the forward part of the proof, we assume that $c$ has decomposition
\begin{align*}
c = \alpha\beta_2\cdots\beta_P\alpha
\nest \left[\Nest_{j=1}^{M_P} b_{P,j}\right]
\nest \cdots
\nest \left[\Nest_{j=1}^{M_2}b_{2,j}\right],
\end{align*}
     where $\alpha\beta_2\cdots\beta_P\alpha$ is a simple cycle of length $P \ge 1$ off $\alpha$, $M_i \ge 0$ is the number of children off $\beta_i$ for $2 \le i \le P$, and $b_{i,j} \in C_{\G\backslash\alpha\cdots\beta_{i-1};\beta_i}$ for $1 \le j \le M_i$.\\

\noindent\textit{Inductive step.}
     The inductive step is the statement $\mathbb{P}(l+1) \implies \mathbb{P}(l)$ for $0 \le l \le D-2$. Referring to
     Definition \ref{defn:CycleReductionOperator}, we find that $\reduc{\dsig;l}[c] = (\alpha)$ implies that $P \le k_l$ and $s_i = M_i + 1$ for every $i$. Then from the definition of $s_i$, it follows that $\reduc{\dsig;l+1}[b_{i,j}] = (\beta_i)$ for all $1\le j \le M_i$. Then from the induction hypothesis $\mathbb{P}(l+1)$ we have that $b_{i,j} \in C_{\G\backslash\alpha\cdots\beta_{i-1};\beta_i}^{[k_{l+1},\ldots,k_{D-1},0]}$. Thus the decomposition of $c$ has the structure of Eq.~\eqref{eqn:Prop:FormOfDecomp}, and $c$ is in $C_{\G;\alpha}^{[k_l,\ldots,k_{D-1},0]}$.\\

\noindent\textit{Base case.} We wish to prove the statement $\mathbb{P}(D-1)$.
     In this case, $\reduc{\dsig;D-1}\left[c\right] = (\alpha)$ implies that $P \le k_{D-1}$ and all child cycles $b_{i,j}$ (if any) satisfy $\reduc{\dsig;D}[b_{i,j}] = (\beta_i)$. Note that a necessary condition for $\reduc{\dsig;D}[b_{i,j}] = (\beta_i)$ is that $L_{i,j} \le k_D$, where $L_{i,j}$ is the length of the base cycle of $b_{i,j}$. However, by the definition of a dressing signature we have $k_D = 0$. Since there are no cycles with length 0, there are no cycles that satisfy $\reduc{\dsig;D}[b_{i,j}] = (\beta_i)$. Thus, $c$ does not have any children; i.e.~$M_i = 0$ for all $i$. Then $c$ is a simple cycle of length $P \le k_{D-1}$, and is an element of $C^{[k_{D-1},0]}_{\G;\alpha} = \bigcup_{L=1}^{k_{D-1}} C_{\G;\alpha;L}$.\\

\noindent\textbf{Backward.} For $0 \le l \le D-1$, let $\mathbb{P}(l)$  be the statement
\begin{align*}
         c \in C_{\G;\alpha}^{[k_l,\ldots,k_{D-1},0]} \implies \reduc{\dsig;l}\big[c\big] = (\alpha).
\end{align*}
     Since $c$ is an element of $C_{\G;\alpha}^{[k_l,\ldots,k_{D-1},0]}$, its decomposition is of the form given in Eq.~\eqref{eqn:Prop:FormOfDecomp}. In particular, the length of the base cycle of $c$ is $L \le k_l$. Thus to prove $\mathbb{P}(l)$ it is sufficient to show that $s_i = N_i + 1$ for all $2 \le i \le L$: that is, that all of the $N_i$ children $c_{i,j}$ off $\mu_i$ satisfy $\reduc{\dsig;l+1}[c_{i,j}] = (\mu_i)$. The proof again proceeds by induction on the parameter $l$, from $l = D-1$ down to $l = 0$.\\

\noindent\textit{Inductive step.}
     The inductive step is the statement $\mathbb{P}(l+1) \implies \mathbb{P}(l)$ for $0 \le l \le D-2$.
     From Eq.~\eqref{eqn:Prop:FormOfDecomp}, we see that $c_{i,j} \in C_{\G\backslash\alpha\mu_2\cdots\mu_{i-1};\mu_i}^{[k_{l+1},\ldots,k_{D-1},0]}$ for $2\le i \le L$ and $1\le j \le N_i$. Then from the induction hypothesis $\mathbb{P}(l+1)$ we have $\reduc{\dsig;l+1}\big[c_{i,j}\big] = (\mu_i)$, so that $s_i = N_i+1$ for all $2 \le i \le L$. It then follows from the definition of $\reduc{\dsig;l}$ that $\reduc{\dsig;l}[c] = (\alpha)$.\\

\noindent\textit{Base case.}
     The base case of the induction is the statement $\mathbb{P}(D-1)$. Then $c$ is a member of $C^{[k_{D-1},0]}_{\G;\alpha}$; that is, a simple cycle of length no greater than $k_{D-1}$. Since $c$ has no children, we have $s_i = N_i + 1 = 1$ for all $2\le i \le L$, and so $\reduc{\dsig;D-1}[c] = (\alpha)$.
\end{proof}

\subsection{The walk reduction operator}\label{sec:WalkReductionOperators}
Having defined the cycle reduction operators $\reduc{\dsig;l}$, we are now in a position to give the explicit definition of the walk reduction operator $\Reduc{\dsig}$. As noted, the action of $\Reduc{\dsig}$ on a walk $w$ is to remove all resummable cycles from $w$, thereby projecting $w$ to its $\dsig$-irreducible core walk $\Reduc{\dsig}[w] \in I^\dsig_{\G}$.

\begin{definition}\label{defn:WalkReductionOperator}
Let $\G$ be a digraph and $\dsig$ be a dressing signature. Then the walk reduction operator $\Reduc{\dsig} : W_\G \rightarrow W_\G$ is a map on $W_\G$ defined as follows. Let $w$ be a walk on $\G$ with decomposition (cf.~Eq.~\eqref{eqn:WalkPrimeFactorisation})
\begin{align*}
w = \alpha\mu_2\cdots\mu_m\omega
\nest \left[\Nest_{j=1}^{N_{m+1}} c_{m+1,j}\right]
\nest \cdots
\nest \left[\Nest_{j=1}^{N_2} c_{2,j}\right]
\nest \left[\Nest_{j=1}^{N_1} c_{1,j}\right],
\end{align*}
     where $\alpha\mu_2\cdots\mu_m\omega \in P_\G$ is a simple path of length $m \ge 0$ on $\G$, $N_i \ge 0$ is the number of child cycles off $\mu_i$, and $c_{i,j} \in C_{\G\backslash\alpha\cdots\mu_{i-1};\mu_i}$ is a cycle off $\mu_i$. Then $\Reduc{\dsig}$ maps $w$ to
\begin{align*}
\Reduc{\dsig}\big[w\big] =\alpha\mu_2\cdots\mu_m\omega
\nest \left[\Nest_{j=1}^{N_{m+1}} \reduc{\dsig;\,0}\big[c_{m+1,j}\big]\right]
\nest \cdots
\nest \left[\Nest_{j=1}^{N_2} \reduc{\dsig;\,0}\big[c_{2,j}\big] \right]
\nest \left[\Nest_{j=1}^{N_1} \reduc{\dsig;\,0}\big[c_{1,j}\big]\right]
\end{align*}
     where $\reduc{\dsig;0}$ is the cycle reduction operator of Definition \ref{defn:CycleReductionOperator}.
\end{definition}

We now prove that $\Reduc{\dsig}$ is idempotent, thus showing that the explicit definition we give here satisfies property P1 of Definition \ref{defn:ReductionDressingAxioms}.
\begin{prop}\label{prop:IdempotenceOfR}
The walk reduction operator $\Reduc{\dsig}$ is idempotent.
\end{prop}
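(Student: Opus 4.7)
The plan is to unwind the definition of $\Reduc{\dsig}$ twice and reduce idempotence of $\Reduc{\dsig}$ to idempotence of $\reduc{\dsig;0}$, which has already been established in Proposition \ref{prop:IdempotenceOfr}. So I would start with the prime decomposition
\[
w = \alpha\mu_2\cdots\mu_m\omega \nest \Bigl[\Nest_{j=1}^{N_{m+1}} c_{m+1,j}\Bigr] \nest \cdots \nest \Bigl[\Nest_{j=1}^{N_1} c_{1,j}\Bigr],
\]
apply Definition \ref{defn:WalkReductionOperator} to get $\Reduc{\dsig}[w]$, and then recognise $\Reduc{\dsig}[w]$ as a walk whose prime decomposition has the same base path $\alpha\mu_2\cdots\mu_m\omega$ and whose child cycles off each $\mu_i$ are exactly those $\reduc{\dsig;0}[c_{i,j}]$ which are not trivial.

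The substantive point to justify is precisely this re-reading of $\Reduc{\dsig}[w]$. The image of each $c_{i,j}$ under $\reduc{\dsig;0}$ lies in $Q^{\dsig;0}_\G \cup T_\G$ by Proposition \ref{prop:ImageOfrkl}. Whenever $\reduc{\dsig;0}[c_{i,j}] = (\mu_i)$, this factor is the identity for nesting off $\mu_i$ and can simply be deleted; whenever $\reduc{\dsig;0}[c_{i,j}]$ is a genuine cycle, it is a cycle off $\mu_i$ on $\G\backslash\alpha\cdots\mu_{i-1}$ (the reduction operator does not enlarge the vertex set used by the cycle, since it only maps sub-cycles to trivial walks or to shorter cycles that still live on the same sub-graph). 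Since the simple path $\alpha\mu_2\cdots\mu_m\omega$ is unchanged and each surviving child is a cycle off the appropriate vertex on the appropriate sub-graph, what results is a valid prime decomposition of $\Reduc{\dsig}[w]$ (by uniqueness, Theorem \ref{thm:PrimeFactorisationTheorem}).

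With this decomposition in hand, applying $\Reduc{\dsig}$ a second time replaces each surviving child cycle $\reduc{\dsig;0}[c_{i,j}]$ by $\reduc{\dsig;0}\bigl[\reduc{\dsig;0}[c_{i,j}]\bigr]$. By Proposition \ref{prop:IdempotenceOfr} this equals $\reduc{\dsig;0}[c_{i,j}]$, so $\Reduc{\dsig}[\Reduc{\dsig}[w]] = \Reduc{\dsig}[w]$.

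The hard part, such as it is, is the bookkeeping in the middle paragraph: confirming that the expression produced by Definition \ref{defn:WalkReductionOperator} really is the prime decomposition of $\Reduc{\dsig}[w]$ rather than merely a valid factorisation of it. Once one observes that the base path is untouched and that reducing a cycle $c_{i,j}$ preserves the vertex on which it is based and the sub-graph on which it lives, this reduces to invoking uniqueness of prime factorisation from Theorem \ref{thm:PrimeFactorisationTheorem}; after that the result is immediate from the idempotence of $\reduc{\dsig;0}$ and no further work (in particular no case analysis or induction) is required.
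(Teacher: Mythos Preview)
Your proposal is correct and follows essentially the same approach as the paper: both arguments unwind Definition~\ref{defn:WalkReductionOperator}, discard the factors $\reduc{\dsig;0}[c_{i,j}]$ that become trivial, recognise the result as the prime decomposition of $\Reduc{\dsig}[w]$, and then invoke the idempotence of $\reduc{\dsig;0}$ from Proposition~\ref{prop:IdempotenceOfr}. You are somewhat more explicit than the paper about justifying that the surviving factors really give the prime decomposition (the paper simply introduces indices $t_{i,k}$ for the non-trivial survivors and proceeds), but this is a presentational rather than a mathematical difference.
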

The proof follows straightforwardly from the fact that $\reduc{\dsig;0}$ is idempotent.
\begin{proof}
Let $w \in W_\G$ be a walk on $\G$ with decomposition
\begin{align*}
w = \alpha\mu_2\cdots\mu_L\omega
\nest \left[\Nest_{j=1}^{N_{L+1}}c_{L+1,j}\right]
\nest \cdots
\nest \left[\Nest_{j=1}^{N_1}c_{1,j}\right]
\end{align*}
     where $\alpha\mu_2\cdots\mu_L\omega$ is a simple path of length $L \ge 0$ and $c_{i,j} \in C_{\G\backslash\alpha\cdots\mu_{i-1};\mu_i}$ for $1\le i \le L+1$ and $1 \le j \le N_i$. Then
\begin{align}
\Reduc{\dsig}\big[w\big] &= \alpha\mu_2\cdots\mu_L\omega
\nest \left[\Nest_{j=1}^{N_{L+1}}\reduc{\dsig;0}\big[c_{L+1,j}\big]\right]
\nest \cdots
\nest \left[\Nest_{j=1}^{N_1}\reduc{\dsig;0}\big[c_{1,j}\big]\right]\nonumber\\
&= \alpha\mu_2\cdots\mu_L\omega
\nest \left[\Nest_{k=1}^{M_{L+1}}\reduc{\dsig;0}\big[c_{L+1,t_{L+1,k}}\big]\right]
\nest \cdots
       \nest \left[\Nest_{k=1}^{M_1}\reduc{\dsig;0}\big[c_{1,t_{1,k}}\big]\right]\label{eqn:IdempotenceOfR:OneReduc}
\end{align}
     where $0 \le M_i \le N_i$ for $1 \le i \le L+1$ is the number of cycles among $c_{i,1},\ldots,c_{i,N_i}$ that are not mapped to $(\mu_i)$ by $\reduc{\dsig;0}$, and $t_{i,k}$ are their indices, which we assume without loss of generality to satisfy $1 \le t_{i,1} < \cdots < t_{i,M_i} \le N_i$. Applying $\Reduc{\dsig}$ again to this result gives
\begin{align*}
\Reduc{\dsig}\big[\Reduc{\dsig}[w]\big]
&= \alpha\mu_2\cdots\mu_L\omega
       \nest \left[\Nest_{k=1}^{M_{L+1}}\reduc{\dsig;0}\big[\reduc{\dsig;0}[c_{L+1,t_{L+1,k}}]\big]\right]
\nest \cdots
       \nest \left[\Nest_{k=1}^{M_1}\reduc{\dsig;0}\big[\reduc{\dsig;0}[c_{1,t_{1,k}}]\big]\right]\\
&=\Reduc{\dsig}\big[w\big],
\end{align*}
     where the second equality is obtained on noting that $\reduc{\dsig;0}$ is idempotent (see Proposition \ref{prop:IdempotenceOfr}) and comparing with Eq.~\eqref{eqn:IdempotenceOfR:OneReduc}.
\end{proof}

\subsection{The set of $(\dsig,l)$-irreducible cycles} \label{subsec:StructureOfQKL}
Recall from Section \ref{sec:42:RandDeltaMotivation} that the set of $(\dsig,l)$-irreducible cycles off $\alpha$ on $\G$, denoted by $Q^{\dsig;l}_{\G;\alpha}$, is the set of all cycles off $\alpha$ on $\G$ that satisfy $\reduc{\dsig;l}[q] = q$, where $\reduc{\dsig;l}$ is defined in Section \ref{sec:CycleReductionOperators}. The $(\dsig,l)$-irreducible cycles are of central importance in defining the set of $\dsig$-irreducible walks. In this section we use the definition of $\reduc{\dsig;l}$ given in Defn.~\ref{defn:CycleReductionOperator} to develop an explicit recursive formula for $Q^{\dsig;l}_{\G;\alpha}$.

To understand the structure of the cycles that appear in $Q^{\dsig;l}_{\G;\alpha}$, let us consider what structure a cycle must have in order to be $(\dsig,l)$-irreducible. We begin by addressing the question of which simple cycles off $\alpha$ -- if any -- are $(\dsig,l)$-irreducible.

Let $s = \alpha\mu_2\cdots\mu_L\alpha$ be a simple cycle off $\alpha$ on $\G$. Since $s$ has no child cycles, the action of $\reduc{\dsig;l}$ on $s$ is straightforward: referring to Definition \ref{defn:CycleReductionOperator} we find $\reduc{\dsig;l}[s] = s$ if $L > k_l$, and $\reduc{\dsig;l}[s] = (\alpha)$ if $L \le k_l$. Thus every simple cycle that is longer than $k_l$ is $(\dsig,l)$-irreducible, while every simple cycle that is shorter than $k_l$ is not. However, in the latter case it is easy to modify $s$ so as to obtain a $(\dsig,l)$-irreducible cycle. The simplest way is to add a single child cycle off an internal vertex of $s$. Explicitly, consider adding a child $c$ off an arbitrary internal vertex of $s$. Then
\begin{align*}
\reduc{\dsig;l}\big[\alpha\mu_2\cdots\mu_L\alpha \nest c\big] =
\begin{cases}
(\alpha) & \text{if $\reduc{\dsig;l+1}[c] = h(c)$,}\\
\alpha\mu_2\cdots\mu_L\alpha \nest \reduc{\dsig;l+1}[c] & \text{else.}
\end{cases}
\end{align*}
It follows that if $\reduc{\dsig;l+1}\big[c\big] = c$, then $\reduc{\dsig;l}\big[\alpha\mu_2\cdots\mu_L\alpha \nest c\big] =  \alpha\mu_2\cdots\mu_L\alpha \nest c$. In other words, although a simple cycle of length less than or equal to $k_l$ is itself not $(\dsig,l)$-irreducible, it can be extended into a $(\dsig,l)$-irreducible cycle by the simple step of nesting a $(\dsig,l+1)$-irreducible child off one of its internal vertices. This reasoning also holds for the case where one $(\dsig,l+1)$-irreducible child is nested off each vertex of a non-empty subset $\{\mu_{i_1},\ldots,\mu_{i_p}\}$ of the internal vertices of $s$. For example, if $k_l = 3$, $s = \alpha\mu_2\mu_3\alpha$, and $q_2, q_3$ are $(\dsig,l+1)$-irreducible cycles off $\mu_2$ and $\mu_3$ respectively, then $\alpha\mu_2\mu_3\alpha \nest q_2$, $\alpha\mu_2\mu_3\alpha \nest q_3$, and $\alpha\mu_2\mu_3\alpha \nest q_3 \nest q_2$ are all $(\dsig,l)$-irreducible.

This line of reasoning shows that every element of the following set is $(\dsig,l)$-irreducible:
\begin{align}
       \underbrace{\big(S_{\G;\alpha}\setminus \cup_{L=1}^{k_l}S_{\G;\alpha;L}\big)}_{\text{simple cycles longer than $k_l$}}
\cup \underbrace{\bigcup_{L=1}^{k_l} \bigcup_{\mathcal{I} \in
\nepowset{\{2,\ldots,L\}}} S_{\G;\alpha;L}
\nest Q^{\dsig;l+1}_{\G\backslash\alpha\cdots\mu_{i_p-1};\mu_{i_p}}
\nest \cdots
       \nest Q^{\dsig;l+1}_{\G\backslash\alpha\cdots\mu_{i_1-1};\mu_{i_1}}}_{\substack{\text{simple cycles not longer than $k_l$ with}\\\text{one or more $(\dsig,l+1)$-irreducible child cycles}}}\label{eqn:KLirredSubset}
\end{align}
where $S_{\G;\alpha;L}$ is the set of simple cycles of length $L$ off $\alpha$, $\alpha\mu_2\cdots\mu_L\alpha$ is a simple cycle of length $L$ off $\alpha$, and $\mathcal{I} = \{i_1,\ldots,i_p\}$ is a non-empty subset of $\{2,\ldots,L\}$ that indexes $p$ internal vertices of $\alpha\mu_2\cdots\mu_L\alpha$, where $1 \le p \le L-1$. [We denote the set of all non-empty subsets of $\{2,\ldots,L\}$ by $\nepowset{\{2,\ldots,L\}}$].

Although every element of the set given in Eq.~\eqref{eqn:KLirredSubset} is $(\dsig,l)$-irreducible, not every $(\dsig,l)$-irreducible cycle appears in Eq.~\eqref{eqn:KLirredSubset}: other $(\dsig,l)$-irreducible cycles can be constructed by taking an element $c$ of this set and adding one or more child cycles off its internal vertices. These child cycles must be carefully chosen so that their presence does not spoil the $(\dsig,l)$-irreducibility of the entire cycle. The structure that the child cycles are permitted to have can be determined from the action of $\reduc{\dsig;l}$. There are two cases, corresponding to the two subsets that are braced in Eq.~\eqref{eqn:KLirredSubset}.

Consider first the case where $c$ comes from the first braced term: i.e.~$c$ is a simple cycle longer than $k_l$. Then $\reduc{\dsig;l}[c]$ applies $\reduc{\dsig;0}$ to every immediate child of $c$. It follows that so long as every child cycle we add is $(\dsig,0)$-irreducible, the new cycle produced from $c$ remains $(\dsig,l)$-irreducible. Alternatively, consider the case where $c$ comes from the second braced term. Then its base cycle is not longer than $k_l$, and at least one internal vertex of the base cycle has precisely one child, which is $(\dsig,l+1)$-irreducible. Consider adding further children to $c$. Then $\reduc{\dsig;l}[c]$ applies $\reduc{\dsig;l+1}[c]$ to the first child off each vertex, and (since the first child is $(\dsig,l+1)$-irreducible and is thus not mapped to a trivial walk) $\reduc{\dsig;0}$ to all subsequent children. Thus as long as the child cycles we add are (i) attached to a vertex that already has a child, (ii) positioned \key{after} the first child, and (iii) $(\dsig,0)$-irreducible, the resultant cycle remains $(\dsig,l)$-irreducible.

We therefore conclude that a cycle is $(\dsig,l)$-irreducible if and only if (a) its base cycle is longer than $k_l$, and all of its immediate child cycles (if any) are $(\dsig,0)$-irreducible; or (b) its base cycle is not longer than $k_l$, it has at least one child cycle, and the first child cycle off a given vertex is $(\dsig,l+1)$-irreducible and all subsequent children (if any) in that hedge are $(\dsig,0)$-irreducible. We formalise this statement in the following theorem.

\begin{theorem}\label{thm:StructureOfQKL}
     Let $\G$ be a digraph and $\dsig$ be a dressing signature of depth $D$. Then for $0 \le l \le D$ the set of all $(\dsig,l)$-irreducible cycles off $\alpha$ on $\G$ is
\begin{align}
Q^{\dsig;l}_{\G;\alpha} =
&\big(S_{\G;\alpha}\setminus \cup_{k=1}^{k_l} S_{\G;\alpha;k}\big)
\nest Q^{\dsig;0\:*}_{\G\backslash\alpha\cdots\beta_{k-1};\beta_k}
\nest \cdots
\nest Q^{\dsig;0\:*}_{\G\backslash\alpha;\beta_2}\label{eqn:StructureOfQKL}\\
         &\cup \bigcup_{L=1}^{k_l}\, \bigcup_{\mathcal{I}\,\in\, \nepowset{\{2,\ldots,L\}} } S_{\G;\alpha;L}
\begin{aligned}[t]
&\nest Q^{\dsig;l+1}_{\G\backslash\alpha\cdots\mu_{i_p-1};\mu_{i_p}}
\nest Q^{\dsig;0\:*}_{\G\backslash\alpha\cdots\mu_{i_p-1};\mu_{i_p}} \\
&\nest \cdots \\
&\nest Q^{\dsig;l+1}_{\G\backslash\alpha\cdots\mu_{i_1-1};\mu_{i_1}}
\nest Q^{\dsig;0\:*}_{\G\backslash\alpha\cdots\mu_{i_1}-1;\mu_{i_1}}
\end{aligned}
\nonumber
\end{align}
     where $\alpha\beta_2\cdots\beta_k\alpha$ is a simple cycle of length $k > k_l$ off $\alpha$,
     $\alpha\mu_2\cdots\mu_L\alpha$ is a simple cycle of length $L \le k_l$ off $\alpha$, and $\mathcal{I} = \{i_1,\ldots,i_p\}$ is a non-empty subset of $\{2,\ldots,L\}$ containing the indices of at least one internal vertex of $\alpha\mu_2\cdots\mu_L\alpha$.
\end{theorem}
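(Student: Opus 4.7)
The strategy is a direct double-inclusion argument that leverages the explicit case analysis built into Definition~\ref{defn:CycleReductionOperator}. Writing an arbitrary cycle $c$ off $\alpha$ in its prime decomposition
\[
c = \alpha\mu_2\cdots\mu_m\alpha \nest \Bigl[\Nest_{j=1}^{N_m} c_{m,j}\Bigr] \nest \cdots \nest \Bigl[\Nest_{j=1}^{N_2} c_{2,j}\Bigr],
\]
I would split the analysis of the fixed-point equation $\reduc{\dsig;l}[c]=c$ into two cases according to whether $m > k_l$ or $m \le k_l$. These two cases will correspond exactly to the two summands on the right-hand side of Eq.~\eqref{eqn:StructureOfQKL}.

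In the first case, $m > k_l$, Definition~\ref{defn:CycleReductionOperator} forces every parameter $l_{i,j}$ to vanish, so the operator acts by applying $\reduc{\dsig;0}$ independently to each child. Hence $\reduc{\dsig;l}[c]=c$ if and only if every $c_{i,j}$ is fixed by $\reduc{\dsig;0}$, i.e.\ is $(\dsig,0)$-irreducible; this reproduces the first summand. The second case, $m \le k_l$, is more delicate: the "trivial" branch outputs $(\alpha)\neq c$, so preservation of $c$ demands that the "else" branch be taken, which requires $\mathcal{I}:=\{i : s_i \le N_i\}$ to be non-empty. For each $i \in \mathcal{I}$, preservation forces $s_i = 1$ (otherwise $c_{i,1},\ldots,c_{i,s_i-1}$ would be erased by $\reduc{\dsig;l+1}$), the first child $c_{i,1}$ to be $(\dsig,l+1)$-irreducible, and any remaining children to be $(\dsig,0)$-irreducible. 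For $i \notin \mathcal{I}$, every child off $\mu_i$ is mapped to $(\mu_i)$ by $\reduc{\dsig;l+1}$ and deleted, so $N_i = 0$. These structural conditions match the second summand verbatim, with $\mathcal{I}$ ranging over $\nepowset{\{2,\ldots,m\}}$.

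For the reverse inclusion, I would run this analysis in reverse: check that any cycle of either stated form satisfies $\reduc{\dsig;l}[c]=c$. For cycles of the first form the "else" branch returns $c$ immediately since $\reduc{\dsig;0}$ is the identity on each $(\dsig,0)$-irreducible child (Proposition~\ref{prop:IdempotenceOfr}). For cycles of the second form one computes the indices prescribed by Definition~\ref{defn:CycleReductionOperator}: for $i \in \mathcal{I}$, idempotence of $\reduc{\dsig;l+1}$ on the $(\dsig,l+1)$-irreducible first child ensures $\reduc{\dsig;l+1}[c_{i,1}] = c_{i,1} \neq (\mu_i)$, so $s_i = 1$; for $i \notin \mathcal{I}$ one has $s_i = N_i+1 = 1$ vacuously. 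The "else" branch then evaluates to $c$ itself.

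I expect the main obstacle to be the bookkeeping in the second case: in particular, establishing that the definition of $s_i$ as the \emph{first} child not mapped to a trivial walk forces $s_i = 1$ (rather than some larger value) whenever $c$ is to be preserved, and ensuring that no intermediate trivial-walk deletions silently disturb the reconstruction of $c$ from its nesting product. Both points hinge on the facts that trivial walks are identity elements for $\nest$ and that the cycle reduction operators are idempotent at every depth; once these are in hand, the recursion closes and the expression for $Q^{\dsig;l}_{\G;\alpha}$ follows.
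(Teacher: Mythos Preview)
Your approach is correct and follows the same case split as the paper: distinguish whether the base-cycle length exceeds $k_l$, then read off the constraints the fixed-point equation $\reduc{\dsig;l}[c]=c$ imposes on the child cycles. The paper, however, wraps this argument in an explicit induction on the number of vertices of $\G$ (with base case the two one-vertex graphs), invoking the inductive hypothesis each time it needs to pass from ``$c_{i,j}\in Q^{\dsig;\cdot}_{\G'}$'' to ``$\reduc{\dsig;\cdot}[c_{i,j}]=c_{i,j}$'' or vice versa. You bypass this scaffolding by observing that the $Q$-sets appearing on the right-hand side of \eqref{eqn:StructureOfQKL} are, by Definition~\ref{defn:KLIrreducibleCycles}, already the fixed-point sets on the relevant subgraphs, so that implication is immediate and no inductive hypothesis is required. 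Your route is therefore more economical; the paper's induction makes the recursion's well-foundedness visible but is not logically needed once one reads the $Q$'s in the formula as Definition~\ref{defn:KLIrreducibleCycles} sets rather than as a recursive specification. The delicate point you flag---that in the short-base-cycle case one must have $s_i=1$ for every $i\in\mathcal{I}$, else trivial-walk deletions would shrink the child count at $\mu_i$ and violate uniqueness of the prime decomposition---is exactly the crux the paper handles via its ``the cycles must match term-wise'' argument in case~2b) of the backward direction.
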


\begin{proof}
     Theorem \ref{thm:StructureOfQKL} corresponds to the statement that a cycle off $\alpha$ on $\G$ is $(\dsig,l)$-irreducible if and only if it is an element of $Q^{\dsig;l}_{\G;\alpha}$; that is
\begin{align}
c\in Q^{\dsig;l}_{\G;\alpha} \iff \reduc{\dsig;\,l}\big[c\big] = c.\nonumber
\end{align}
     We prove the forward and backward directions of this statement separately. The proof proceeds by induction on the number of vertices in $\G$. We begin with a general remark that will be required in both sections of the proof.\\

     \noindent From Eq.~\eqref{eqn:StructureOfQKL} and the discussion preceding it, it follows that a cycle $c$ is in $Q^{\dsig;l}_{\G;\alpha}$ if and only if one of two conditions holds. In each case we denote the base cycle of $c$ by $s$.
\begin{enumerate}
    \item[a)] If $s$ is longer than $k_l$ and all immediate children of $s$ are $(\dsig,0)$-irreducible, then $c$ appears in the first line of Eq.~\eqref{eqn:StructureOfQKL}, and has decomposition
\begin{align}
\alpha\mu_2\cdots\mu_L\alpha
\nest \bigg[\Nest_{j=1}^{N_L} c_{L,j}\bigg]
\nest \cdots
\nest \bigg[\Nest_{j=1}^{N_2} c_{2,j}\bigg]\label{eqn:StructureOfQKL:FormA1}
\end{align}
where $L > k_l$, $N_i \ge 0$ for $2 \le i \le L$, and $c_{i,j} \in Q^{\dsig;0}_{\G\backslash\alpha\cdots\mu_{i-1};\mu_i}$ for $1 \le j \le N_i$.\\

    \item[b)] If $s$ is not longer than $k_l$, has at least one child, and the first child off each internal vertex of $s$ is $(\dsig,l+1)$-irreducible and any remaining children in the same hedge are $(\dsig,0)$-irreducible, then $c$ appears in the second part of Eq.~\eqref{eqn:StructureOfQKL}. Then $c$ has decomposition
\begin{subequations}\label{eqn:StructureOfQKL:FormBGroup}
\begin{align}
\alpha\mu_2\cdots\mu_L\alpha
\nest \left[\Nest_{j=1}^{N_{i_p}} c_{i_p,j}\right]
\nest \cdots
\nest \left[\Nest_{j=1}^{N_{i_1}} c_{i_1,j}\right]\label{eqn:StructureOfQKL:FormB1}
\end{align}
     where $L \le k_l$, $\{i_1,\ldots, i_p\}$ is a non-empty subset of $\{2,\ldots, L\}$, $N_{i_k} \ge 1 $ for $1 \le k \le p$, and
\begin{align}
c_{i_k,j} \in \begin{cases}
Q^{\dsig;l+1}_{\G\backslash\alpha\cdots\mu_{i_k-1};i_k}
& \text{if $j = 1$,}\\[2mm]
Q^{\dsig;0}_{\G\backslash\alpha\cdots\mu_{i_k-1};i_k}
& \text{else.}
\end{cases}\label{eqn:StructureOfQKL:FormB2}
\end{align}
\end{subequations}
\end{enumerate}
\noindent In the forward direction of the proof, we show that any cycle $c$ that has the structure of Eq.~\eqref{eqn:StructureOfQKL:FormA1} or Eq.~\eqref{eqn:StructureOfQKL:FormBGroup} satisfies $\reduc{\dsig;l}[c] = c$. In the backward direction, we show that any cycle that satisfies $\reduc{\dsig;l}[c] = c$ must have the structure of either Eq.~\eqref{eqn:StructureOfQKL:FormA1} or Eq.~\eqref{eqn:StructureOfQKL:FormBGroup}, and so is an element of $Q^{\dsig;l}_{\G;\alpha}$.\\

\noindent\textbf{Forward.}
     Let $n \ge 1$ be the number of vertices in $\G$, and $\mathbb{L}(n,l)$ for $n \ge 1$ and $0\le l \le D$ be the statement
\begin{align}
c\in Q^{\dsig;l}_{\G;\alpha}\implies
\reduc{\dsig;\,l}\big[c\big] = c. \nonumber
\end{align}
     We prove this statement for all $n$ and $l$ by induction on $n$. For a given $n$, the statements required to prove $\mathbb{L}(n,l)$ depend on $l$. Specifically, $\mathbb{L}(n,l)$ for any $0 \le l \le D-1$ requires $\mathbb{L}(m,l+1)$ and $\mathbb{L}(m,D)$ for all $m < n$, while $\mathbb{L}(n,D)$ only requires $\mathbb{L}(m,0)$ for $m < n$.
\\

\noindent \textit{Inductive step.} The inductive step corresponds to the statements
\begin{align*}
\mathbb{L}(m,0) \text{ for all $m < n$ } &\implies \mathbb{L}(n,D),\\
         \mathbb{L}(m,l+1) \text{ and } \mathbb{L}(m,0) \text{ for all $m < n$ } &\implies \mathbb{L}(n,l) \text{ for $0 \le l \le D-1$}.
\end{align*}
       As noted above, given an element $c$ of $Q^{\dsig;l}_{\G;\alpha}$, there are two possibilities:
\begin{itemize}
       \item[a)] The base cycle of $c$ is longer than $k_l$ (note that this is necessarily the case if $l = D$, since $k_D = 0$). Then $c$ has the form of Eq.~\eqref{eqn:StructureOfQKL:FormA1}, and since $\reduc{\dsig;\,l}$ applies $\reduc{\dsig;0}$ to every child cycle, we have
\begin{align*}
\reduc{\dsig;\,l}\big[c\big] &= \alpha\mu_2\cdots\mu_L\alpha
\nest \left[\Nest_{j=1}^{N_L} \reduc{\dsig;\,0}\big[c_{L,j}\big]\right]
\nest \cdots
\nest \left[\Nest_{j=1}^{N_2} \reduc{\dsig;\,0}\big[c_{2,j}\big]\right] \\
&=\alpha\mu_2\cdots\mu_L\alpha
\nest \left[\Nest_{j=1}^{N_L} c_{L,j}\right]
\nest \cdots
\nest \left[\Nest_{j=1}^{N_2} c_{2,j}\right]\\
& = c,
\end{align*}
       where the second equality follows from the induction hypotheses $\mathbb{L}(m_i,0)$ for $2\le i \le L$, where $m_i < n$ is the number of vertices in $\mathcal{G}\backslash\alpha\cdots\mu_{i-1}$.\\

       \item[b)] The base cycle of $c$ is not longer than $k_l$. Then $c$ has the form of Eq.~\eqref{eqn:StructureOfQKL:FormBGroup}. In this case applying $\reduc{\dsig;\,l}$ to $c$ applies $\reduc{\dsig;\,l+1}$ to the first $\min(N_i,s_i)$ children off a given internal vertex $\mu_i$, and $\reduc{\dsig;0}$ to any remaining children. Recall from \S\ref{sec:CycleReductionOperators} that $s_i$ is the index of the first child off $\mu_i$ that satisfies $\reduc{\dsig;l+1}[c_{i,s_i}] \neq (\mu_i)$. We claim that $s_i = 1$. This can be proven as follows. From Eq.~\eqref{eqn:StructureOfQKL:FormBGroup}, we have that $c_{i,1}$ is an element of $Q^{\dsig;l+1}_{\G\backslash\alpha\cdots\mu_{i-1};\mu_i}$.
           Since $(\mu_i)$ is not in $Q^{\dsig;l+1}_{\G\backslash\alpha\cdots\mu_{i-1};\mu_i}$ (see e.g.~Defn.~\ref{defn:KLIrreducibleCycles}) it follows that $c_{i,1} \neq (\mu_i)$. By the induction hypothesis $\mathbb{L}(m_i,l+1)$, where $m_i < n$ is the number of vertices in $\G\backslash\alpha\cdots\mu_{i-1}$, we have that $\reduc{\dsig;l+1}[c_{i,1}] = c_{i,1}$. Thus $\reduc{\dsig;l+1}[c_{i,1}] \neq (\mu_i)$, and $s_i = 1$. Since this argument holds for $i = i_1,i_2,\ldots,i_p$, we have $s_{i_1} = s_{i_2} = \cdots = s_{i_p} = 1$, so that
\begin{align*}
\reduc{\dsig;l}\big[c\big] &=
\alpha\mu_2\cdots\mu_L\alpha
             \nest \left[c_{i_p,1}\nest \Nest_{j=2}^{N_{i_p}} \reduc{\dsig;0}[c_{i_p,j}]\right]
            \nest \cdots
            \nest \left[c_{i_1,1}\nest \Nest_{j=2}^{N_{i_1}} \reduc{\dsig;0}[c_{i_1,j}]\right]\\
&=   \alpha\mu_2\cdots\mu_L\alpha
\nest \left[\Nest_{j=1}^{N_{i_p}} c_{i_p,j}\right]
\nest \cdots
\nest \left[\Nest_{j=1}^{N_{i_1}} c_{i_1,j}\right]\\
&= c,
\end{align*}
where in obtaining the first equality we have already used the fact that $\reduc{\dsig;l+1}[c_{i,1}] = c_{i,1}$ for $i = i_1,\ldots,i_p$, and the second equality follows from applying the induction hypotheses $\mathbb{L}(m_1,0),\ldots,\mathbb{L}(m_p,0)$, where $m_k < n$ is the number of vertices in the graph $\G\backslash\alpha\cdots\mu_{i_k-1}$.\\
\end{itemize}

\noindent\textit{Base case.} The base case of the induction is the statement $\mathbb{L}(1,l)$ for $0 \le l \le D$. We prove this statement by considering each of the two possible graphs on a single vertex.
\begin{itemize}
         \item[a)] $\G$ has no loop. Then there are no cycles on $\G$, so $Q^{\dsig;l}_{\G;\alpha}$ is empty for every $l$, and $\mathbb{L}(1,l)$ is vacuously true.
\item[b)] $\G$ has a loop. Then there are two possibilities:
\begin{itemize}
           \item[i)] $0 \le l \le D-1$. Then $Q^{\dsig;l}_{\G;\alpha}$ is empty, and $\mathbb{L}(1,l)$ is vacuously true.
           \item[ii)] $ l = D$. Then $Q^{\dsig;D}_{\G;\alpha} = \{\alpha\alpha\}$. Since $\reduc{\dsig;D}[\alpha\alpha] = \alpha\alpha$, then $\mathbb{L}(1,D)$ holds.\\
\end{itemize}
\end{itemize}

\noindent\textbf{Backward.}
     Let $n \ge 1$ be the number of vertices in $\G$, and $\mathbb{L}(n,l)$ for $n \ge 1$ and $0\le l \le d$ be the statement
\begin{align*}
\reduc{\dsig;\,l}\big[c\big] = c \implies c\in Q^{\dsig;l}_{\G;\alpha}.
\end{align*}
     The proof again proceeds by induction on $n$, with the same dependency structure as in the forward direction.\\

\noindent \textit{Inductive step.} The inductive step corresponds to the statements
\begin{align*}
\mathbb{L}(m,0) \text{ for all $m < n$ } &\implies \mathbb{L}(n,D),\\
         \mathbb{L}(m,l+1) \text{ and } \mathbb{L}(m,0) \text{ for all $m < n$ } &\implies \mathbb{L}(n,l) \text{ for $0 \le l \le D-1$}.
\end{align*}
     Let the decomposition of $c$ into a simple cycle and a collection of cycles be
\begin{align}
c = \alpha\mu_2\cdots\mu_L\alpha
\nest \left[\Nest_{j=1}^{N_L}c_{L,j}\right]
\nest \cdots
\nest \left[\Nest_{j=1}^{N_2}c_{2,j}\right]\label{eqn:StructureOfQKL:DecompOfC}
\end{align}
         where $N_i \ge 0$ for $2\le i \le L$, and $c_{i,j} \in C_{\G\backslash\alpha\cdots\mu_{i-1};\mu_i}$ for $1\le j \le N_i$. We will show that it follows from $c = \reduc{\dsig;l}[c]$ that the decomposition of $c$ is of the form of either Eq.~\eqref{eqn:StructureOfQKL:FormA1} or Eq.~\eqref{eqn:StructureOfQKL:FormBGroup}, so that $c$ is in $Q^{\dsig;l}_{\G;\alpha}$. Indeed, consider $\reduc{\dsig;l}[c]$; then there are two possibilities:
\begin{itemize}
    \item[a)] $L > k_l$. Then $\reduc{\dsig;l}$ applies $\reduc{\dsig;0}$ to every child of c, and the equality $c = \reduc{\dsig;l}\big[c\big]$ corresponds to
    \begin{multline*}
        \alpha\mu_2\cdots\mu_L\alpha
        \nest \left[\Nest_{j=1}^{N_L}c_{L,j}\right]
        \nest \cdots
        \nest \left[\Nest_{j=1}^{N_2}c_{2,j}\right]\\
        =
        \alpha\mu_2\cdots\mu_L\alpha
        \nest \left[\Nest_{j=1}^{N_L}\reduc{\dsig;0}\big[c_{L,j}\big]\right]
        \nest \cdots
        \nest \left[\Nest_{j=1}^{N_2}\reduc{\dsig;0}\big[c_{2,j}\big]\right]
    \end{multline*}
Since the decomposition of a cycle into a simple cycle plus a collection of cycles is unique \cite{Giscard2012}, this equality can only be satisfied if $c_{i,j} = \reduc{\dsig;0}\big[c_{i,j}\big]$ for all $c_{i,j}$. In other words, in order for the entire cycle $c$ to remain unmodified by $\reduc{\dsig;l}$, each of the children $c_{i,j}$ must remain unmodified by $\reduc{\dsig;0}$. It then follows from the induction hypotheses $\mathbb{L}(m_i,0)$, where $m_i < n$ is the number of vertices in $\G\backslash\alpha\cdots\mu_{i-1}$ (i.e.~the graph on which $c_{i,j}$ exists) that $c_{i,j} \in Q^{\dsig;0}_{\G\backslash\alpha\cdots\mu_{i-1};\mu_i}$. Then $c$ has the form of Eq.~\eqref{eqn:StructureOfQKL:FormA1}, and so appears in $Q^{\dsig;l}_{\G;\alpha}$.

\item[b)] $L \le k_l$. In this case $\reduc{\dsig;l}\big[c\big]$ depends on the indices $s_2,\ldots,s_L$. Recall that $s_i$ is the index of the first child off $\mu_i$ that is not $[k_l,\ldots,k_{D-1},0]$-structured, with $s_i = N_i + 1$ if no such child exists. Then $\reduc{\dsig;l}[c] = (\alpha)$ if and only if $s_i = N_i + 1$ for every $i = 2,\ldots, L$.

    It is convenient to explicitly divide the vertices $\mu_i$ into those which have no children, and those which have at least one child. Let $\mathcal{I} = \{i_1,\ldots,i_p\} \subseteq \{2,\ldots,L\}$ be the index set of the vertices that have at least one child, so that $N_i \neq 0$ if and only if $i \in \mathcal{I}$. Then the decomposition of $c$ is
    \begin{align}
        c = \alpha\mu_2\cdots\mu_L\alpha
        \nest \left[\Nest_{j=1}^{N_{i_p}}c_{i_p,j}\right]
        \nest \cdots
        \nest \left[\Nest_{j=1}^{N_{i_1}}c_{i_1,j}\right] \label{eqn:StructureOfQKL:cstruc}
    \end{align}
    Then $i \notin \mathcal{I} \implies s_i = N_i + 1$. Recall that if every $s_i = N_i +1$, then $\reduc{\dsig;l}[c] = (\alpha)$. Thus if (i) $\mathcal{I}$ is empty, or (ii) $\mathcal{I}$ is not empty but $s_i = N_i + 1$ for every $i \in \mathcal{I}$, then $\reduc{\dsig;l}[c] = (\alpha)$. However, it follows from the fact that $\reduc{\dsig;l}\big[c\big] = c$ that $\reduc{\dsig;l}\big[c\big] \neq (\alpha)$. Thus we have that $\mathcal{I}$ is not empty and at least one of the $s_i : i \in \mathcal{I}$ is smaller than $N_i + 1$. From Definition \ref{defn:CycleReductionOperator} we find that $\reduc{\dsig;l}[c]$ evaluates to
    \begin{equation*}
    \reduc{\dsig;l}\big[c\big] = \alpha\mu_2\cdots\mu_L\alpha
    \begin{aligned}[t]
    &\nest \reduc{\dsig;l+1}\big[c_{i_p,s_{i_p}}\big]
    \nest \left[\Nest_{j=s_{i_p}+1}^{N_{i_p}} \reduc{\dsig;0}[c_{i_p,j}]\right]\\
    &\nest \cdots \\
    &\nest \reduc{\dsig;l+1}\big[c_{i_1,s_{i_1}}\big]
    \nest \left[\Nest_{j=s_{i_1}+1}^{N_{i_1}} \reduc{\dsig;0}[c_{i_1,j}]\right],
    \end{aligned}
    \end{equation*}
    where we have deleted the first $s_i -1$ cycles off $\mu_i$ for $i \in \mathcal{I}$, since these cycles are all mapped to $(\mu_i)$ by $\reduc{\dsig;l+1}$ and do not contribute to the nesting product. Comparing Eq.~\eqref{eqn:StructureOfQKL:cstruc} to this expression, we find that the equality $c = \reduc{\dsig;l}[c]$ can only hold if
    \begin{align}
      \Nest_{j=1}^{N_{i}}c_{i,j}= \reduc{\dsig;l+1}\big[c_{i,s_{i}}\big]
    \nest \Nest_{j=s_{i}+1}^{N_{i}} \reduc{\dsig;0}\big[c_{i,j}\big]\label{eqn:StructureOfQKL:concat}
    \end{align}
    for every $i \in \mathcal{I}$. We now note that each of the terms on each side of this expression is a cycle, and the nesting operation on cycles is equivalent to concatenation. The equality in Eq.~\eqref{eqn:StructureOfQKL:concat} thus requires that the number of terms on each side is the same: thus $s_i = 1$. Further, since concatenation is non-commutative, the cycles must match term-wise: i.e.~$c_{i,1} = \reduc{\dsig;l+1}[c_{i,1}]$ and $c_{i,j} = \reduc{\dsig;0}[c_{i,j}]$ for $2 \le j \le N_i$. In other words, $c$ can only remain unmodified by $\reduc{\dsig;l}$ if for each of its vertices that have children, the first child remains unmodified by $\reduc{\dsig;l+1}$ and any subsequent children remain unmodified by $\reduc{\dsig;0}$. Then by the induction hypotheses $\mathbb{L}(m_i,l+1)$ and $\mathbb{L}(m_i,0)$, where $m_i < n$ is the number of vertices in $\G\backslash\alpha\cdots\mu_{i-1}$, we have $c_{i,1} \in Q^{\dsig;l+1}_{\G\backslash\alpha\cdots\mu_{i-1},\mu_i}$ and $c_{i,j} \in Q^{\dsig;0}_{\G\backslash\alpha\cdots\mu_{i-1},\mu_i}$ for $2 \le j \le N_i$. Since this argument holds for every $i \in \mathcal{I}$, $c$ has the form given in Eq.~\eqref{eqn:StructureOfQKL:FormBGroup}, and so is an element of $Q^{\dsig;l}_{\G;\alpha}$.
\end{itemize}

     \noindent\textit{Base case.} The base case of the induction is $\mathbb{L}(1,l)$ for $l = 0,1,\ldots,D$.
We prove this statement by considering each of the two possible graphs on a single vertex.
\begin{itemize}
         \item[a)] $\G$ has no loop. Then $\G$ contains no cycles, so there is no such cycle $c$ such that $c=\reduc{\dsig;l}[c]$, and $\mathbb{L}(1,l)$ holds vacuously.
         \item[b)] $\G$ has a loop $\alpha\alpha$ of length $L = 1$. Then we have one of two cases:
\begin{itemize}
           \item[i)] $l = 0,1,\ldots,D-1$. Then $L \le k_l$ since $k_l \ge 1$, and $\reduc{\dsig;l}[\alpha\alpha] = (\alpha)$. Therefore there is no cycle on $\G$ that satisfies $c=\reduc{\dsig;l}[c]$, and $\mathbb{L}(1,l)$ is vacuously true.
           \item[ii)] $l = D$. Then $L > k_D$ since $k_D = 0$, and $\alpha\alpha$ satisfies $\reduc{\dsig;D}[\alpha\alpha] = \alpha\alpha$. Further, we have that $Q^{\dsig;D}_{\G;\alpha} = \{\alpha\alpha\}$. Thus $\alpha\alpha \in Q^{\dsig;D}_{\G;\alpha}$, and $\mathbb{L}(1,D)$ is true.
\end{itemize}
\end{itemize}
\end{proof}

In this section we gave an explicit definition for $Q^{\dsig;l}_{\G;\alpha}$, the set of all $(\dsig,l)$-irreducible cycles off $\alpha$ on $\G$. This set plays a central role in the definition of the set of $\dsig$-irreducible walks, which we address in the following section.

\subsection{The set of $\dsig$-irreducible walks}
\label{subsec:StructureOfIK}
Recall that the set of $\dsig$-irreducible walks from $\alpha$ to $\omega$ on $\G$, denoted $I^\dsig_{\G;\alpha\omega}$, is the set of all walks on $\G$ from $\alpha$ to $\omega$ that satisfy $\Reduc{\dsig}[w] = w$, where $\Reduc{\dsig}$ is the walk reduction operator defined in Section \ref{sec:WalkReductionOperators}. In this section we give an explicit recursive form for $I^\dsig_{\G;\alpha\omega}$.

Since we have already developed an explicit expression for the set of $(\dsig,l)$-irreducible cycles in the previous section, the form of the $\dsig$-irreducible walks is relatively straightforward. Recall that by the definition of $\Reduc{\dsig}$ in Section \ref{sec:WalkReductionOperators}, applying $\Reduc{\dsig}$ to a walk $w$ leaves the base path $p$ of $w$ untouched and maps every immediate child cycle $c$ of $p$ to $\reduc{\dsig;0}[c]$. It follows that $w$ is left unmodified by $\Reduc{\dsig}$ if and only if every one of the child cycles $c$ is unmodified by $\reduc{\dsig;0}$ -- or in other words, if every child cycle is $(\dsig,0)$-irreducible. This observation is formalised and proven in the following theorem.

\begin{theorem}\label{thm:StructureOfIK}
Let $\G$ be a digraph and $\dsig$ be a dressing signature. Then the set of $\dsig$-irreducible walks from $\alpha$ to $\omega$ on $\G$ is given by
\begin{align}
I^\dsig_{\G;\alpha\omega} =
P_{\G;\alpha\omega}
\nest Q^{\dsig;0\:*}_{\G\backslash\alpha\cdots\mu_L;\omega}
\nest \cdots
\nest Q^{\dsig;0\:*}_{\G\backslash\alpha;\mu_2}
\nest Q^{\dsig;0\:*}_{\G;\alpha},\label{eqn:StructureOfIK}
\end{align}
       where $\alpha\mu_2\cdots\mu_L\omega \in P_{\G;\alpha\omega}$ is a simple path of length $L \ge 0$ from $\alpha$ to $\omega $ on $\G$, and $Q^{\dsig;0}_{\G;\alpha}$ is the set of all $(\dsig,l)$-irreducible cycles off $\alpha$ on $\G$, as defined in Theorem \ref{thm:StructureOfQKL}.
\end{theorem}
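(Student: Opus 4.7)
The plan is to prove Eq.~\eqref{eqn:StructureOfIK} by a term-by-term comparison of prime factorisations. First I would take an arbitrary $w\in W_{\G;\alpha\omega}$ and invoke Theorem~\ref{thm:PrimeFactorisationTheorem} to write its unique prime factorisation
\begin{equation*}
w = \alpha\mu_2\cdots\mu_L\omega
\nest \left[\Nest_{j=1}^{N_{L+1}} c_{L+1,j}\right]
\nest \cdots
\nest \left[\Nest_{j=1}^{N_1} c_{1,j}\right],
\end{equation*}
where $\alpha\mu_2\cdots\mu_L\omega \in P_{\G;\alpha\omega}$ and $c_{i,j}\in C_{\G\backslash\alpha\cdots\mu_{i-1};\mu_i}$. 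Plugging this into Definition~\ref{defn:WalkReductionOperator} gives
\begin{equation*}
\Reduc{\dsig}\big[w\big] = \alpha\mu_2\cdots\mu_L\omega
\nest \left[\Nest_{j=1}^{N_{L+1}}\reduc{\dsig;0}\big[c_{L+1,j}\big]\right]
\nest \cdots
\nest \left[\Nest_{j=1}^{N_1}\reduc{\dsig;0}\big[c_{1,j}\big]\right].
\end{equation*}

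For the forward direction ($I^\dsig_{\G;\alpha\omega}\subseteq$~RHS), if $w$ is $\dsig$-irreducible then $\Reduc{\dsig}[w]=w$. Here I would use uniqueness of prime factorisation to conclude term-by-term that $\reduc{\dsig;0}[c_{i,j}] = c_{i,j}$ for every $(i,j)$, so each $c_{i,j}$ lies in $Q^{\dsig;0}_{\G\backslash\alpha\cdots\mu_{i-1};\mu_i}$ by Definition~\ref{defn:KLIrreducibleCycles}. Consequently $w$ belongs to the nesting product on the right-hand side of Eq.~\eqref{eqn:StructureOfIK}. Conversely ($\supseteq$), suppose $w$ has the form of Eq.~\eqref{eqn:StructureOfIK}, so every $c_{i,j}\in Q^{\dsig;0}_{\G\backslash\alpha\cdots\mu_{i-1};\mu_i}$ satisfies $\reduc{\dsig;0}[c_{i,j}]=c_{i,j}$ by definition of the $(\dsig,0)$-irreducible cycles. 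Substituting these equalities into the expression for $\Reduc{\dsig}[w]$ above yields $\Reduc{\dsig}[w]=w$, so $w\in I^\dsig_{\G;\alpha\omega}$.

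The main obstacle is justifying the term-by-term comparison in the forward direction. Some cycle $c_{i,j}$ could a priori be mapped by $\reduc{\dsig;0}$ to the trivial walk $(\mu_i)$, and trivial walks act as identities for nesting (and so would silently drop out of the product for $\Reduc{\dsig}[w]$). To handle this I would observe that if any $\reduc{\dsig;0}[c_{i,j}]=(\mu_i)$, then the displayed expression for $\Reduc{\dsig}[w]$, after absorbing the trivial factor, is the nesting of a simple path with strictly fewer cycles than $w$; by uniqueness of the prime factorisation (Theorem~\ref{thm:PrimeFactorisationTheorem}) this cannot equal $w$, contradicting $\dsig$-irreducibility. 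If no $\reduc{\dsig;0}[c_{i,j}]$ is trivial, then each $\reduc{\dsig;0}[c_{i,j}]$ is a genuine cycle in $C_{\G\backslash\alpha\cdots\mu_{i-1};\mu_i}$, and the displayed expression for $\Reduc{\dsig}[w]$ is itself a prime factorisation (with the same base path and the same number of children off each vertex); uniqueness then forces the cycle-by-cycle equalities $\reduc{\dsig;0}[c_{i,j}] = c_{i,j}$. Combining the two cases closes the argument.
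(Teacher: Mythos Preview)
Your proof is correct and follows essentially the same approach as the paper's: both directions rest on the first-level decomposition of a walk into its base path plus child cycles, together with the definition of $Q^{\dsig;0}$ as the fixed points of $\reduc{\dsig;0}$. Your ``forward'' and ``backward'' labels are swapped relative to the paper, but the content matches; if anything, your explicit case analysis (handling the possibility that some $\reduc{\dsig;0}[c_{i,j}]$ collapses to a trivial walk, which would reduce the cycle count and contradict uniqueness of the decomposition) spells out a step the paper simply asserts via ``the properties of the nesting operation between cycles''.
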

\begin{remark}
This result is universally valid in that for any chosen dressing signature $\dsig$, the set of $\dsig$-irreducible walks is given by Eq.~\eqref{eqn:StructureOfIK}. In view of this, it is worth investigating the predictions of Eq.~\eqref{eqn:StructureOfIK} in the limiting cases of $\dsig = [0]$ and $\dsig = \dsig_{\text{max}}$.

In the case of $\dsig = [0]$, no cycles are resummable and all cycles are $\dsig$-irreducible: thus $Q^{[0];0}_{\G;\alpha} = C_{\G;\alpha}$. Then the right-hand side of Eq.~\eqref{eqn:StructureOfIK} becomes  $P_{\G;\alpha\omega}$
$\nest C^{\,*}_{\G\backslash\alpha\cdots\mu_\ell;\omega}
\nest \cdots$
$\nest C^{\,*}_{\G\backslash\alpha;\mu_2}$
$\nest C^{\,*}_{\G;\alpha}$, which we recognize from Eq.~\eqref{eqn:WalkPrimeFactorisation} as the prime decomposition of $W_{\G;\alpha\omega}$. Thus Eq.~\eqref{eqn:StructureOfIK} reduces to $I^{[0]}_{\G;\alpha\omega} = W_{\G;\alpha\omega}$, reproducing the expected result that all walks on $\G$ are [0]-irreducible. Conversely, in the case of $\dsig = \dsig_{\,\text{max}}$, all cycles are resummable and no cycles are $\dsig$-irreducible, so that $Q^{\dsig_{\,\text{max}};0}_{\G;\alpha} = \emptyset$. Then Eq.~\eqref{eqn:StructureOfIK} gives $I^{\dsig_{\,\text{max}}}_{\G;\alpha\omega} = P_{\G;\alpha\omega}$. This is perfectly consistent with the observations that (i) the $\dsig_{\,\text{max}}$-irreducible walks are those that do not contain any resummable cycles, and (ii) all cycles are resummable with respect to $\dsig_{\,\text{max}}$.

For dressing signatures between $[0]$ and $\dsig_{\,\text{max}}$, the $\dsig$-irreducible walks interpolate smoothly between the set of all walks and the set of simple paths.
\end{remark}

\begin{proof}
Recall that a walk is $\dsig$-irreducible if and only if $\Reduc{\dsig}\big[w\big] = w$. Thus
Theorem \ref{thm:StructureOfIK} corresponds to the statement
\begin{align*}
w \in
P_{\G;\alpha\omega}
\nest Q^{\dsig;0\:*}_{\G\backslash\alpha\cdots\mu_L;\omega}
\nest \cdots
\nest Q^{\dsig;0\:*}_{\G\backslash\alpha;\mu_2}
\nest Q^{\dsig;0\:*}_{\G;\alpha}
\iff \Reduc{\dsig}\big[w\big] = w.
\end{align*}
We prove the forward and backward directions of this statement separately. In each direction the result follows straightforwardly from Theorem \ref{thm:StructureOfQKL}. Before we begin, we note that a walk is in $P_{\G;\alpha\omega}
\nest Q^{\dsig;0\:*}_{\G\backslash\alpha\cdots\mu_L;\omega}
\nest \cdots
\nest Q^{\dsig;0\:*}_{\G\backslash\alpha;\mu_2}
\nest Q^{\dsig;0\:*}_{\G;\alpha}$
if and only if its decomposition into a simple path plus a collection of cycles has the form
\begin{align}\label{eqn:StructureOfIK:wstruc}
\alpha\mu_2\cdots\mu_L\omega
\nest \left[\Nest_{j=1}^{N_{L+1}} q_{L+1,j} \right]
\nest \cdots
\nest \left[\Nest_{j=1}^{N_2} q_{2,j} \right]
\nest \left[\Nest_{j=1}^{N_1} q_{1,j} \right]
\end{align}
where $\alpha\mu_2\cdots\mu_L\omega$ is a simple path, $N_i \ge 0$, and $q_{i,j} \in Q^{\dsig;0}_{\G\backslash\alpha\cdots\mu_{i-1};\mu_i}$
for $1 \le i \le L+1$ and $1\le j \le N_i$.\\

\noindent\textbf{Forward.}
We wish to show that
\begin{align*}
w \in P_{\G;\alpha\omega}
\nest Q^{\dsig;0\:*}_{\G\backslash\alpha\cdots\mu_L;\omega}
\nest \cdots
\nest Q^{\dsig;0\:*}_{\G\backslash\alpha;\mu_2}
\nest Q^{\dsig;0\:*}_{\G;\alpha}
 \implies \Reduc{\dsig}[w] = w.
\end{align*}
Then $w$ has the form given by Eq.~\eqref{eqn:StructureOfIK:wstruc}, so that
\begin{align*}
\Reduc{\dsig}\big[w\big] &= \alpha\mu_2\cdots\mu_L\omega
\nest \left[\Nest_{j=1}^{N_{L+1}} \reduc{\dsig;0}\big[q_{L+1,j}\big] \right]
\nest \cdots
\nest \left[\Nest_{j=1}^{N_2} \reduc{\dsig;0}\big[q_{2,j}\big] \right]
\nest \left[\Nest_{j=1}^{N_1} \reduc{\dsig;0}\big[q_{1,j}\big] \right]\\
&= \alpha\mu_2\cdots\mu_L\omega
\nest \left[\Nest_{j=1}^{N_{L+1}} q_{L+1,j}\right]
\nest \cdots
\nest \left[\Nest_{j=1}^{N_2} q_{2,j}\right]
\nest \left[\Nest_{j=1}^{N_1} q_{1,j}\right]
\\
&= w,
\end{align*}
where the second equality follows on noting that each $q_{i,j}$ is $(\dsig,0)$-irreducible and so satisfies $q_{i,j} = \reduc{\dsig;0}[q_{i,j}]$.\\

\noindent\textbf{Backward.}
We wish to show that
\begin{align*}
\Reduc{\dsig}[w] = w \implies w \in P_{\G;\alpha\omega}
\nest Q^{\dsig;0\:*}_{\G\backslash\alpha\cdots\mu_L;\omega}
\nest \cdots
\nest Q^{\dsig;0\:*}_{\G\backslash\alpha;\mu_2}
\nest Q^{\dsig;0\:*}_{\G;\alpha}.
\end{align*}
We prove this by showing that any walk satisfying $w = \Reduc{\dsig}[w]$ must have the structure of Eq.~\eqref{eqn:StructureOfIK:wstruc}. Let $w$ have decomposition
\begin{align}
w = \alpha\mu_2\cdots\mu_L\omega
\nest \left[\Nest_{j=1}^{N_{L+1}} c_{L+1,j} \right]
\nest \left[\Nest_{j=1}^{N_2} c_{2,j} \right]
\nest \cdots
\nest \left[\Nest_{j=1}^{N_1} c_{1,j} \right],
\label{eqn:StructureOfIK:backwards:wdecomp}
\end{align}
where $N_i \ge 0$ is the number of child cycles off $\mu_i$ for $1\le i \le L+1$, and $c_{i,j}\in C_{\G\backslash\alpha\cdots\mu_{i-1};\mu_i}$ and $1 \le j \le N_i$. Then
\begin{align*}
\Reduc{\dsig}\big[w\big] = \alpha\mu_2\cdots\mu_L\omega
\nest \left[\Nest_{j=1}^{N_{L+1}} \reduc{\dsig;0}\big[c_{L+1,j}\big] \right]
\nest \cdots
\nest \left[\Nest_{j=1}^{N_2} \reduc{\dsig;0}\big[c_{2,j}\big] \right]
\nest \left[\Nest_{j=1}^{N_1} \reduc{\dsig;0}\big[c_{1,j}\big] \right].
\end{align*}
Comparing this expression with Eq.~\eqref{eqn:StructureOfIK:backwards:wdecomp}, it follows from the properties of the nesting operation between cycles that $\Reduc{\dsig}\big[w\big] = w$ only if $\reduc{\dsig;0}\big[c_{i,j}\big] = c_{i,j}$ for all $1 \le i \le L+1$ and $1 \le j \le N_i$. Then from Theorem \ref{thm:StructureOfQKL} with $l = 0$ we have that $c_{i,j} \in Q^{\dsig;0}_{\G\backslash\alpha\cdots\mu_{i-1};\mu_i}$. The decomposition of $w$ thus matches the structure described in Eq.~\eqref{eqn:StructureOfIK:wstruc}, so that $w$ is an element of $P_{\G;\alpha\omega}
\nest Q^{\dsig;0\:*}_{\G\backslash\alpha\cdots\mu_L;\omega}
\nest \cdots
\nest Q^{\dsig;0\:*}_{\G\backslash\alpha;\mu_2}
\nest Q^{\dsig;0\:*}_{\G;\alpha}$ as required.
\end{proof}

\subsection{The cycle dressing operators}\label{subsec:CycleDressingOperators}
We now turn to developing an explicit definition for the walk dressing operator $\Dress{\dsig;l}{\G}$. We begin by defining the cycle dressing operators $\dress{\dsig;l}{\G}$. The key characteristic of the cycle dressing operators is that $\dress{\dsig;l}{\G}$ maps a $(\dsig,l)$-irreducible cycle $q$ to the set of cycles formed by adding zero or more resummable cycles to $q$. Consequently, $\dress{\dsig;l}{\G}[q]$ is the set of all cycles that satisfy $\reduc{\dsig;l}[c] = q$. We prove this statement in Section \ref{subsec:rdeltaInverses}.

\begin{definition}[The cycle dressing operators]\label{defn:CycleDressingOperators}
Let $\G$ be a digraph, $\dsig$ be a dressing signature of depth $D$, and $0 \le l \le D$ be a parameter. Then the cycle dressing operator $\dress{\dsig;l}{\G}$ is a map $\dress{\dsig;l}{\G} : Q^{\dsig;l}_{\G} \rightarrow 2^{\,C_\G}$ defined as follows. Let $q$ be a $(\dsig,l)$-irreducible cycle on $\G$ with decomposition
\begin{align*}
q = \alpha\mu_2\ldots\mu_L\alpha
\nest \left[\Nest_{j=1}^{N_L}c_{L,j}\right]
\nest \cdots
\nest \left[\Nest_{j=1}^{N_2}c_{2,j}\right],
\end{align*}
where $\alpha\mu_2\cdots\mu_L\alpha \in S_\G$ is a simple cycle of length $L \ge 1$ and $N_i \ge 0$ is the number of child cycles off $\mu_i$. Then $\dress{\dsig;l}{\G}$ maps $q$ to the set
\begin{align*}
\dress{\dsig;l}{\G}\big[q\big] = \left\{\alpha\mu_2\cdots\mu_L\alpha\right\}
             &\nest \Nest_{j=1}^{N_L} \left(C^{[k_{l_{L,j}},\ldots,k_{D-1},0]\:*}_{\G\backslash\alpha\cdots\mu_{L-1};\mu_L} \nest\dress{\dsig;l_{L,j}}{\G\backslash\alpha\cdots\mu_{L-1}}\big[c_{L,j}\big]
\right)
\nest D_{\G\backslash\alpha\cdots\mu_{L-1};\mu_L}^*\\
&\nest \cdots \\
         &\nest \Nest_{j=1}^{N_2} \left(C^{[k_{l_{2,j}},\ldots,k_{D-1},0]\:*}_{\G\backslash\alpha;\mu_2} \nest\dress{\dsig;l_{2,j}}{\G\backslash\alpha}\big[c_{2,j}\big]
\right)
\nest D_{\G\backslash\alpha;\mu_2}^*
\end{align*}
where
\begin{align*}
D_{\G\backslash\alpha\cdots\mu_{i-1};\mu_i} =
\begin{cases}
         C^{[k_{l+1},\ldots,k_{D-1},0]}_{\G\backslash{\alpha\cdots\mu_{i-1};\mu_i}} & \text{if $L \le k_l$ and $N_i = 0$,}\\
C^{\dsig}_{\G\backslash{\alpha\cdots\mu_{i-1};\mu_i}} & \text{otherwise,}
\end{cases}
\end{align*}
and the parameters $l_{i,j}$ for $2\le i \le L$ and $1 \le j \le N_i$ are given by
\begin{align*}
l_{i,j} = \begin{cases}
l+1& \text{if $L\le k_l$ and $j = 1$,}\\
0& \text{otherwise.}
\end{cases}
\end{align*}
\end{definition}

\begin{figure}\label{fig:cycle_dressing_example}
  \centering
  \includegraphics[scale=1.25]{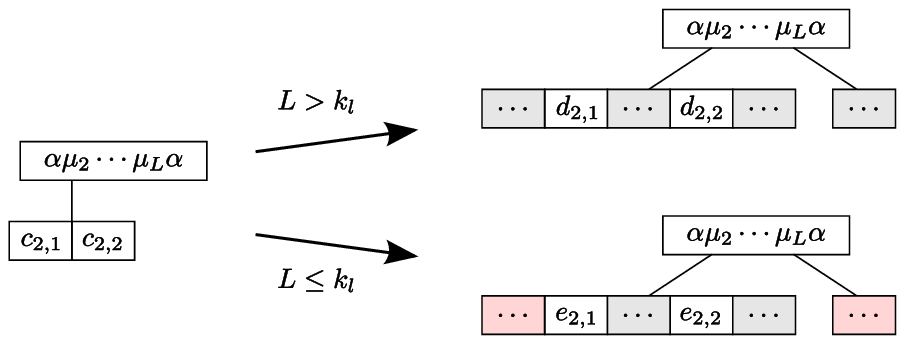}
  \caption{The action of $\dress{\dsig;l}{\G}$ on a cycle $q$ (see text for details). (Left) the cycle $q$. The vertex $\mu_L$ has no child cycles, and serves as an example of the case where $N_i = 0$. (Right) An example of the structure of an element of $\dress{\dsig;l}{\G}[q]$ in the two possible cases of $L > k_l$ and $L \le k_l$. The nodes shaded grey (pink) represent sequences of zero or more $\dsig$-structured ($[k_{l+1},\ldots,k_{D-1},0]$-structured) cycles. Here $d_{2,1}$ and $d_{2,2}$ are elements of the sets $\dress{\dsig;0}{\G\backslash\alpha}[c_{2,1}]$ and $\dress{\dsig;0}{\G\backslash\alpha}[c_{2,2}]$ respectively, while $e_{2,1}$ is an element of $\dress{\dsig;l+1}{\G\backslash\alpha}[c_{2,1}]$ and $e_{2,2}$ is an element of $\dress{\dsig;0}{\G\backslash\alpha}[c_{2,2}]$.}
\end{figure}

The action of $\dress{\dsig;l}{\G}$ on a $(\dsig,l)$-irreducible cycle $q$ depends on the length of the base cycle of $q$, and can be summarised as follows. If the base cycle of $q$ is longer than $k_l$, $\dress{\dsig;l}{\G}$ maps $q$ to the set of cycles formed from $q$ by (i) adding zero or more $\dsig$-structured cycles off every vertex $\mu_i$ before every child $c_{i,j}$ and after the final child $c_{i,N_i}$, and (ii) replacing every child $c_{i,j}$ by an element from the set $\dress{\dsig;0}{\G'}[c_{i,j}]$, where $\G'$ is the graph on which $c_{i,j}$ exists. On the other hand, if the base cycle of $q$ is not longer than $k_l$, $\dress{\dsig;l}{\G}$ maps $q$ to the set of cycles formed from $q$ by (i) adding zero or more $[k_{l+1},\ldots,k_{D-1},0]$-structured cycles to every internal vertex of $q$ that has no children, (ii) adding zero or more $[k_{l+1},\ldots,k_{D-1},0]$-structured cycles before the first child, zero or more $\dsig$-structured cycles before every subsequent child, and zero or more $\dsig$-structured cycles after the final child off every internal vertex of $q$ that has at least one child, (iii) replacing the first child $c_{i,1}$ by an element of $\dress{\dsig;l+1}{\G'}[c_{i,1}]$ and every subsequent child $c_{i,j}$ by an element of $\dress{\dsig;0}{\G'}[c_{i,j}]$. Figure \ref{fig:cycle_dressing_example} illustrates the action of $\dress{\dsig;l}{\G}$.

\subsection{The walk dressing operator}\label{sec:WalkDressingOperator}
     We now give the explicit definition for the walk dressing operator $\Dress{\dsig}{\G}$. The defining characteristic of $\Dress{\dsig}{\G}$ is that it maps a $\dsig$-irreducible walk $i$ to the set of all walks that can be formed by adding zero or more resummable cycles to $i$. Correspondingly, every walk $w$ in the set $\Dress{\dsig}{\G}[i]$ satisfies $\Reduc{\dsig}[w] = i$. We prove this statement in Section \ref{subsec:RDeltaInverses}.

\begin{definition}[The walk dressing operator]\label{defn:WalkDressingOperator}
     Let $\G$ be a digraph and $\dsig$ a dressing signature. Then the walk dressing operator $\Dress{\dsig}{\G}$ is a map $\Dress{\dsig}{\G} : I^\dsig_\G \rightarrow \powset{W_\G}$ defined as follows. Let $w$ be a $\dsig$-irreducible walk on $\G$ with decomposition
\begin{align*}
w = \alpha\mu_2\cdots\mu_L\omega
\nest \left[\Nest_{j=1}^{N_{L+1}}c_{L+1,j}\right]
\nest \cdots
\nest \left[\Nest_{j=1}^{N_2}c_{2,j}\right]
\nest \left[\Nest_{j=1}^{N_1}c_{1,j}\right],
\end{align*}
where $\alpha\mu_2\cdots\mu_L\omega \in P_\G$ is a simple path of length $L \ge 0$ on $\G$, $N_i \ge 0$ is the number of child cycles off $\mu_i$, and $c_{i,j}\in C_{\G\backslash\alpha\cdots\mu_{i-1};\mu_i}$ for $1 \le i \le L+1$ and $1 \le j \le N_i$. Then $\Dress{\dsig}{\G}$ maps $w$ to the set
\begin{align*}
\Dress{\dsig}{\G}\big[w\big] = \left\{\alpha\mu_2\cdots\mu_L\omega\right\}
&\nest \left[\Nest_{j=1}^{N_{L+1}}\left(
         C_{\G\backslash\alpha\cdots\mu_L;\omega}^{\dsig\:*} \nest\dress{\dsig;0}{\G\backslash\alpha\cdots\mu_L}\big[c_{L+1,j}\big]\right)\right] \nest C_{\G\backslash\alpha\cdots\mu_L;\omega}^{\dsig\:*}\\
&\nest \:\cdots \\
&\nest \left[\Nest_{j=1}^{N_1}\left(
         C_{\G;\alpha}^{\dsig\:*}\nest\dress{\dsig;0}{\G}\big[c_{1,j}\big]\right)\right] \nest C_{\G;\alpha}^{\dsig\:*},
\end{align*}
where $\dress{\dsig;0}{\G}$ is the cycle dressing operator of Definition \ref{defn:CycleDressingOperators}.
\end{definition}

\begin{figure}\label{fig:walk_dressing_example}
\includegraphics[scale=1.25]{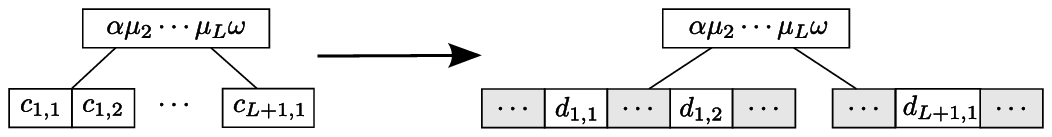}
  \caption{The action of the walk dressing operator $\Dress{\dsig}{\G}$ on a $\dsig$-irreducible walk $i$ (see text for details). (Left) the walk $i$. (Right) The general structure of the walks in the set $\Dress{\dsig}{\G}[i]$. The shaded nodes represent sequences of zero or more $\dsig$-structured cycles, while $d_{i,j}$ is an element from the set $\dress{\dsig;0}{\G\backslash\alpha\cdots\mu_{i-1}}[c_{i,j}]$.}
  \end{figure}

The walk dressing operator $\Dress{\dsig}{\G}$ maps a $\dsig$-irreducible walk $i$ to the set of walks obtained from $i$ by (i) adding zero or more $\dsig$-structured cycles before every child $c_{i,j}$ and after the final child $c_{i,N_i}$ off every vertex $\mu_i$, and (ii) replacing every child $c_{i,j}$ with an element from the set $\dress{\dsig;0}{\G'}[c_{i,j}]$, where $\G'$ is the graph on which $c_{i,j}$ exists. This action is illustrated in Figure \ref{fig:walk_dressing_example}.

\subsection{The cycle reduction and cycle dressing operators are inverses}
\label{subsec:rdeltaInverses}
In this section we prove that the cycle reduction and cycle dressing operators of Definitions \ref{defn:CycleReductionOperator} and \ref{defn:CycleDressingOperators} satisfy property P2 of Definition \ref{defn:reductiondressingAxioms}: namely, that for a given $(\dsig,l)$-irreducible cycle $q$, the set $\dress{\dsig;l}{\G}[q]$ contains all cycles, and only those cycles, that satisfy $\reduc{\dsig;l}[c] = q$.

\begin{lem}\label{lem:rdeltaInverses}
Let $\G$ be a digraph, $\dsig$ be a dressing signature with depth $D$, $c \in C_{\G;\alpha}$ be a cycle off $\alpha$ on $\G$, and $q \in Q^{\dsig;l}_{\G;\alpha}$ be a $(\dsig,l)$-irreducible cycle off $\alpha$ on $\G$ for some $0 \le l \le D$. Then $c$ is an element of $\dress{\dsig;l}{\G}[q]$ if and only if $\reduc{\dsig;l}[c] = q$.
\end{lem}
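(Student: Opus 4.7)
The plan is to prove both implications by induction on the number of vertices $n$ of $\G$, with the inductive hypothesis for fixed $n$ being invoked at strictly smaller vertex counts and at parameters $l+1$ and $0$ --- exactly the dependency pattern used successfully in Theorem \ref{thm:StructureOfQKL}. Throughout, I begin by noting that Definition \ref{defn:CycleReductionOperator} either collapses $c$ to the trivial walk $(\alpha)$ or leaves its base cycle untouched; since $q$ is a genuine cycle, $\reduc{\dsig;l}[c]=q$ forces the base cycles of $c$ and $q$ to coincide. Let $L$ denote their common length, and split into the cases $L > k_l$ and $L \le k_l$ that already drive the definitions of $\reduc{\dsig;l}$ and $\dress{\dsig;l}{\G}$.

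For the forward direction, expand an arbitrary $c \in \dress{\dsig;l}{\G}[q]$ using Definition \ref{defn:CycleDressingOperators}: at each internal vertex of the base cycle, $c$ consists of the original children of $q$ replaced by elements $d_{i,j} \in \dress{\dsig;l_{i,j}}{\G'}[c_{i,j}]$, interleaved with chosen sequences of $\dsig$-structured or $[k_{l+1},\ldots,k_{D-1},0]$-structured cycles. Feed this expansion through $\reduc{\dsig;l}$. By Proposition \ref{prop:TrivialEquivalentCycles}, every inserted structured cycle is annihilated to the relevant trivial walk and so drops out of the nesting product. By the induction hypothesis applied on the strictly smaller graph $\G' = \G \backslash \alpha\cdots\mu_{i-1}$, each $d_{i,j}$ is mapped by $\reduc{\dsig;l_{i,j}}$ back to $c_{i,j}$. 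What survives is precisely the prime decomposition of $q$.

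For the backward direction, write the prime decomposition of $c$, apply $\reduc{\dsig;l}$ term-by-term using Definition \ref{defn:CycleReductionOperator}, and equate the result with the prime decomposition of $q$. Since the decomposition of a cycle into simple cycle plus children is unique, each child of $c$ sorts into one of two types: type (a), whose locally appropriate cycle reduction sends it to a trivial walk and which (by Proposition \ref{prop:TrivialEquivalentCycles}) is therefore $\dsig$-structured or $[k_{l+1},\ldots,k_{D-1},0]$-structured; and type (b), whose image equals a specific child $c_{i,j}$ of $q$, and which (by the induction hypothesis) lies in $\dress{\dsig;l_{i,j}}{\G'}[c_{i,j}]$. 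Regrouping the type-(a) and type-(b) children at each vertex by position reproduces exactly the alternating-block structure prescribed by Definition \ref{defn:CycleDressingOperators}, placing $c$ in $\dress{\dsig;l}{\G}[q]$.

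The main obstacle is the positional bookkeeping in the sub-case $L \le k_l$, where the first child off each internal vertex carries local depth $l+1$ but all later children carry local depth $0$. In the backward direction one must verify that the first "genuine" child at each vertex, i.e.\ the one appearing at position $s_i$ in Definition \ref{defn:CycleReductionOperator}, is precisely the slot that $\dress{\dsig;l}{\G}$ fills with an element of $\dress{\dsig;l+1}{\G'}[c_{i,1}]$, while everything strictly to its left in the hedge is $[k_{l+1},\ldots,k_{D-1},0]$-structured and everything after it falls into the $L > k_l$-style regime. This relies on the fact that $(\mu_i) \notin Q^{\dsig;l+1}_{\G';\mu_i}$ together with the idempotence of $\reduc{\dsig;l+1}$ from Proposition \ref{prop:IdempotenceOfr}, which together guarantee that the dressed first child cannot accidentally reduce to a trivial walk and thus spoil the index $s_i$.
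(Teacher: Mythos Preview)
Your proposal is correct and follows essentially the same approach as the paper: induction on the number of vertices with the $(l+1,0)$ dependency pattern, a forward/backward split, a case split on $L>k_l$ versus $L\le k_l$, Proposition~\ref{prop:TrivialEquivalentCycles} to annihilate the inserted structured cycles, and the inductive hypothesis on the smaller graphs $\G\backslash\alpha\cdots\mu_{i-1}$ to handle the $d_{i,j}$. The only minor remark is that in your final paragraph the idempotence of $\reduc{\dsig;l+1}$ is not actually the tool that prevents the dressed first child from collapsing to a trivial walk; rather, it is the inductive hypothesis itself (giving $\reduc{\dsig;l+1}[d_{k,1}]=q_{k,1}$, which is a cycle and hence not trivial) that pins down $s_i$ correctly in the forward direction, exactly as the paper argues.
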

\begin{proof}
We wish to show that for a $(\dsig,l)$-irreducible cycle $q$ off $\alpha$ and an arbitrary cycle $c$ off $\alpha$, then
\begin{align}
c \in \dress{\dsig;l}{\G}[q] \iff \reduc{\dsig;l}[c] = q.\label{eqn:rdeltInverses:statement}
\end{align}
We prove the forward and backward directions of this statement separately. Although the proof in each direction is somewhat lengthy, the central idea is very simple. In the forward direction, we wish to show that if $c$ can be obtained from $q$ by adding a collection of resummable cycles off the internal vertices of $q$, then applying $\reduc{\dsig;l}$ to $c$ removes those same cycles (by mapping each to the corresponding trivial walk) so that $q$ is recovered. In the backward direction, we wish to show the opposite: if $q$ can be obtained from $c$ by deleting some child cycles from $c$, then applying $\dress{\dsig;l}{\G}$ to $q$ adds those cycles back, thereby recovering $c$.

The proof in each direction proceeds by induction on the number of vertices in $\G$. We begin with some general remarks that will be useful in both directions of the proof.\\

\noindent\textbf{Preliminary remarks.}
As noted in the proof of Theorem \ref{thm:StructureOfQKL}, there are two possible cases for an element $q$ of $Q^{\dsig;l}_{\G;\alpha}$, which are mutually exclusive.
\begin{itemize}
\item[1)] $q$ has decomposition given by Eq.~\eqref{eqn:StructureOfQKL:FormA1}: i.e.~the base cycle of $q$ is longer than $k_l$, and every child cycle is $(\dsig,0)$-irreducible. Then $\dress{\dsig;l}{\G}$ maps $q$ to the set
\begin{align*}
\dress{\dsig;l}{\G}\big[q\big] = \big\{\alpha\mu_2\cdots\mu_L\alpha\big\}
             &\nest \left[\Nest_{j=1}^{N_L}\left(C^{\dsig\:*}_{\G\backslash\alpha\cdots\mu_{L-1};\mu_L} \nest\dress{\dsig;0}{\G\backslash\alpha\cdots\mu_{L-1}}\big[q_{L,j}\big]\right)\right]
             \nest C^{\dsig\:*}_{\G\backslash\alpha\cdots\mu_{L-1};\mu_L}\\
&\nest \cdots \\
             &\nest \left[\Nest_{j=1}^{N_2}\left(C^{\dsig\:*}_{\G\backslash\alpha;\mu_2} \nest \dress{\dsig;0}{\G\backslash\alpha}\big[q_{2,j}\big]\nest \right)\right] \nest C_{\G\backslash\alpha;\mu_2}^{\dsig\:*}.
\end{align*}
This set consists of all the cycles formed from $q$ by adding zero or more $\dsig$-structured cycles to the vertex $\mu_i$ in front of every child $q_{i,j}$ and after the final child $q_{i,N_i}$, and additionally applying $\dress{\dsig;0}{\G\backslash\alpha\cdots\mu_{i-1}}$ to every child cycle $q_{i,j}$. A cycle is a member of this set if and only if its decomposition has the form
\begin{subequations}\label{eqn:Thm2:FormACDecompGroup}
\begin{equation}
\begin{split}
\alpha\mu_2\cdots\mu_L\alpha
         &\nest \Bigg[\Nest_{j=1}^{N_L}\bigg(\Nest_{m=1}^{P_{L,j}} f_{L,j,m} \nest d_{L,j} \bigg)\Bigg]\nest\left[\Nest_{m=1}^{P_{L,N_L+1}} f_{L,N_L+1,m} \right]\\[3mm]
&\nest \cdots \\[3mm]
         &\nest \Bigg[\Nest_{j=1}^{N_2}\bigg(\underbrace{\Nest_{m=1}^{P_{2,j}} f_{2,j,m}}_{\text{from $C^{\dsig\:*}_{\G\backslash\alpha;\mu_2}$}}
         \nest \underbrace{\vphantom{\Nest_{m=1}^{P_{2,j}}}d_{2,j}}_{\text{from $\dress{\dsig;0}{\G\backslash\alpha}[q_{2,j}]$}} \bigg)\Bigg]\nest\Bigg[\underbrace{\vphantom{\Nest_{m=1}^{P_{2,j}}}\Nest_{m=1}^{P_{2,N_2+1}} f_{2,N_2+1,m}}_{\text{from $C^{\dsig\:*}_{\G\backslash\alpha;\mu_2}$}} \Bigg].
\end{split}
\end{equation}
for some $P_{2,1},\ldots,P_{L,N_L+1} \ge 0$, with
\begin{equation}
\begin{aligned}
f_{i,j,m} \in C^{\dsig}_{\G\backslash\alpha\cdots\mu_{i-1};\mu_i} \quad\text{ and } \quad
       d_{i,j} \in \dress{\dsig;0}{\G\backslash\alpha\cdots\mu_{i-1}}\big[q_{i,j}\big] \label{eqn:Thm2:FormACDecompPart2}
\end{aligned}
\end{equation}
\end{subequations}
for indices in the relevant ranges. Here the cycles denoted $f_{i,j,m}$ are $\dsig$-structured cycles that have been added to the vertex $\mu_i$ in front of every child $q_{i,j}$ and after the final child $q_{i,N_i}$, and $d_{i,j}$ is an element from the set of cycles produced by applying $\dress{\dsig;0}{\G\backslash\alpha\cdots\mu_{i-1}}$ to $q_{i,j}$.\\

\item[2)] $q$ has a decomposition of the form given in Eq.~\eqref{eqn:StructureOfQKL:FormBGroup}. Then $\dress{\dsig;l}{\G}$ maps $q$ to the set
\begin{equation}\label{eqn:DressingResultCaseB}
\begin{aligned}
&\big\{\alpha\mu_2\cdots\mu_L\alpha\big\}\\
& \nest C^{[k_{l+1},\ldots,k_{d-1},0]\:*}_{\G\backslash\alpha\cdots\mu_{i_p-1};\mu_{i_p}}
\nest \dress{\dsig;l+1}{\G\backslash\alpha\cdots\mu_{i_p-1}}\big[q_{p,1}\big]
\nest \left[\Nest_{j=2}^{N_{p}}\left(C^{\dsig\:*}_{\G\backslash\alpha\cdots\mu_{i_p-1};\mu_{i_p}} \nest \dress{\dsig;0}{\G\backslash\alpha\cdots\mu_{i_p-1}}\big[q_{p,j}] \right) \right]
\nest C^{\dsig\:*}_{\G\backslash\alpha\cdots\mu_{i_p-1}}\\
&\nest\cdots \\
& \nest C^{[k_{l+1},\ldots,k_{d-1},0]\:*}_{\G\backslash\alpha\cdots\mu_{i_1-1};\mu_{i_1}}
\nest \dress{\dsig;l+1}{\G\backslash\alpha\cdots\mu_{i_1-1}}\big[q_{1,1}\big]
\nest \left[\Nest_{j=2}^{N_{1}}\left(C^{\dsig\:*}_{\G\backslash\alpha\cdots\mu_{i_1-1};\mu_{i_1}} \nest \dress{\dsig;0}{\G\backslash\alpha\cdots\mu_{i_1-1}}\big[q_{1,j}] \right) \right]
\nest C^{\dsig\:*}_{\G\backslash\alpha\cdots\mu_{i_1-1}}\\
           &\nest C_{\G\backslash\alpha\cdots\mu_{j_q-1};\mu_{j_q}}^{[k_{l+1},\ldots,k_{d-1},0]\:*}
           \nest \cdots
           \nest C_{\G\backslash\alpha\cdots\mu_{j_1-1};\mu_{j_1}}^{[k_{l+1},\ldots,k_{d-1},0]\:*},
\end{aligned}
\end{equation}
where $\{j_1,\ldots, j_q\} = \{2,\ldots, L\}\backslash\{i_1,\ldots,i_p\}$ index the internal vertices of $\alpha\mu_2\cdots\mu_L\alpha$ that have no child cycles.\footnote{Note that the indices $j_1,\ldots, j_q$ and $i_1,\ldots, i_p$ will generally be interleaved, and so the factors on the right-hand side of Eq.~\eqref{eqn:DressingResultCaseB} are not in the correct order as written: in order to respect the nesting conditions they must be nested into the cycle $\alpha\mu_2\cdots\mu_L\alpha$ in reverse order of root index, from $\mu_L$ down to $\mu_2$.} This set consists of all cycles formed from $q$ by nesting zero or more $[k_{l+1},\ldots,k_{D-1},0]$-structured cycles off every internal vertex of $q$ that has no children, and adding zero or more $[k_{l+1},\ldots,k_{D-1},0]$-structured cycles before the first child, zero or more $\dsig$-structured cycles before every subsequent child, and zero or more $\dsig$-structured cycles after the final child, off every internal vertex that has at least one child. Additionally, $\dress{\dsig;l+1}{\G}$ is applied to the first child off each vertex, and $\dress{\dsig;0}{\G}$ is applied to any subsequent children off the same vertex.

A cycle is an element of the set given in Eq.~\eqref{eqn:DressingResultCaseB} if and only if its decomposition has the form
\begin{subequations}\label{eqn:Thm2:FormBCDecompGroup}
\begin{equation}\label{eqn:Thm2:FormBCDecompPart1}
\begin{aligned}
\alpha\mu_2\cdots\mu_L\alpha
&\nest\left[\Nest_{m=1}^{P_{p,1}} f_{p,1,m}\right] \nest d_{p,1} \nest \left[\Nest_{j=2}^{N_p} \left(\Nest_{m=1}^{P_{p,j}} f_{p,j,m} \nest d_{p,j}\right)\right] \nest\left[\Nest_{m=1}^{P_{p,N_p+1}} f_{p,N_p+1,m}\right]\\[2mm]
&\nest \cdots\\[2mm]
&\nest\Bigg[\Nest_{m=1}^{P_{1,1}} f_{1,1,m}\Bigg] \nest d_{1,1} \nest \Bigg[\Nest_{j=2}^{N_1} \bigg(\Nest_{m=1}^{P_{1,j}} f_{1,j,m} \nest d_{1,j}\bigg)\Bigg] \nest\left[\Nest_{m=1}^{P_{1,N_1+1}} f_{1,N_1+1,m}\right]\\
&\nest \left[\Nest_{m=1}^{R_q} g_{q,m}\right] \nest \cdots \nest \left[\Nest_{m=1}^{R_1} g_{1,m}\right]
\end{aligned}
\end{equation}
for some $P_{1,1},\ldots,P_{p,N_p+1} \ge 0$ and $R_1,\ldots, R_q \ge 0$, with
\begin{equation}\label{eqn:Thm2:FormBCDecompPart2}
\begin{aligned}
f_{k,1,m} &\in C^{[k_{l+1},\ldots,k_{D-1},0]}_{\G\backslash\alpha\cdots\mu_{i_k-1};\mu_{i_k}}
\qquad&d_{k,1} &\in \dress{\dsig;l+1}{\G\backslash\alpha\cdots\mu_{i_k-1};\mu_{i_k}}[q_{k,1}],\\
f_{k,j,m} &\in C^{\dsig}_{\G\backslash\alpha\cdots\mu_{i_k-1};\mu_{i_k}}
\qquad&d_{k,j} &\in \dress{\dsig;0}{\G\backslash\alpha\cdots\mu_{i_k-1};\mu_{i_k}}[q_{k,j}],\\
g_{k,m} &\in C^{[k_{l+1},\ldots,k_{D-1},0]}_{\G\backslash\alpha\cdots\mu_{j_k-1};\mu_{j_k}}
\end{aligned}
\end{equation}
\end{subequations}
for indices in the relevant ranges. Here the cycles denoted $f_{k,1,m}$ are the $[k_{l+1},\ldots,k_{D-1},0]$-structured cycles that have been added to the vertex $\mu_{i_k}$ before its first child, while $f_{k,j,m}$ for $j > 1$ are the $\dsig$-structured cycles that have been added to $\mu_{i_k}$ before every subsequent child and after its final child. The cycles denoted $g_{k,m}$ are the $[k_{l+1},\ldots,k_{D-1},0]$-structured cycles that have been attached to the vertex $\mu_{j_k}$ (which had no children in the original cycle). Finally, $d_{k,1}$ is an element of the cycle set produced by applying $\dress{\dsig;l+1}{\G\backslash\alpha\cdots\mu_{i_k-1}}$ to $q_{k,1}$, and $d_{k,j}$ for $j > 1$ is an element of the cycle set produced by applying $\dress{\dsig;0}{\G\backslash\alpha\cdots\mu_{i_k-1}}$ to $q_{k,j}$.
\end{itemize}

We now begin the body of the proof. The proof relies heavily on Proposition \ref{prop:TrivialEquivalentCycles}, which states that $\reduc{\dsig;l}$ maps any $[k_{l},\ldots,k_{D-1},0]$-structured cycle to the corresponding trivial walk. \\

\noindent\textbf{Forward.}
     Let $n \ge 1$ be the number of vertices in $\G$, and $\mathbb{L}(n,l)$ for $n \ge 1$ and $0\le l \le D$ be the statement that for $q \in Q^{\dsig;l}_{\G;\alpha}$,
\begin{align*}
    c \in \dress{\dsig;l}{\G}\big[q\big] \implies \reduc{\dsig;l}\big[c\big] = q.
\end{align*}
We prove this statement for all $n$ and $l$ by induction on $n$. The
The proof proceeds by induction on $n$. The dependency structure of the induction is identical to that seen previously: that is, $\mathbb{L}(n,l)$ for $0 \le l \le D-1$ requires $\mathbb{L}(m,l+1)$ and $\mathbb{L}(m,D)$ for all $m < n$, while $\mathbb{L}(m,D)$ only requires $\mathbb{L}(m,0)$ for all $m < n$.\\

\noindent \textit{Inductive step.} The inductive step corresponds to the statements
\begin{align*}
\mathbb{L}(m,0) \text{ for all $m < n$ } &\implies \mathbb{L}(n,D),\\
         \mathbb{L}(m,l+1) \text{ and } \mathbb{L}(m,0) \text{ for all $m < n$ } &\implies \mathbb{L}(n,l) \text{ for $0 \le l \le D-1$.}
\end{align*}
     Given that $c$ is an element of $\dress{\dsig;l}{\G}\big[q\big]$, it must either have the structure given in Eq.~\eqref{eqn:Thm2:FormACDecompGroup} or that given in Eq.~\eqref{eqn:Thm2:FormBCDecompGroup}. In each case we show that applying $\reduc{\dsig;l}$ to $c$ recovers $q$, by explicitly evaluating $\reduc{\dsig;l}[c]$.
\begin{itemize}
       \item[a)] The base cycle of $c$ is longer than $k_l$. Then $c$ has the form of Eq.~\eqref{eqn:Thm2:FormACDecompGroup}. Note that this is necessarily the case if $l = D$. In this case when $\reduc{\dsig;l}$ is applied to $c$, $\reduc{\dsig;0}$ is applied to every child cycle $c_{i,j}$. Thus we have (cf.~Eq.~\eqref{eqn:Thm2:FormACDecompGroup}):
\begin{align*}
\reduc{\dsig;l}\big[c\big] = \alpha\mu_2\cdots\mu_L\alpha
         &\nest \Nest_{j=1}^{N_L}\bigg(\Nest_{m=1}^{P_{L,j}} \reduc{\dsig;0}\big[f_{L,j,m}\big]
         \nest \reduc{\dsig;0}\big[d_{L,j}\big] \bigg)
         \nest\left[\Nest_{m=1}^{P_{L,N_L+1}} \reduc{\dsig;0}\big[f_{L,N_L+1,m}\big] \right]\\[3mm]
&\nest \cdots \\[3mm]
         &\nest \Nest_{j=1}^{N_2}\bigg(\Nest_{m=1}^{P_{2,j}} \reduc{\dsig;0}\big[f_{2,j,m}\big]
         \nest \reduc{\dsig;0}\big[d_{2,j}\big] \bigg)
         \nest\Bigg[\Nest_{m=1}^{P_{2,N_2+1}} \reduc{\dsig;0}\big[f_{2,N_2+1,m}\big] \Bigg].
\end{align*}
Recall that all of the cycles denoted $f_{i,j,m}$ are $\dsig$-structured, while $d_{i,j}$ is an element of the set $\dress{\dsig;0}{\G\backslash\alpha\cdots\mu_{i-1}}[q_{i,j}]$ (see Eq.~\eqref{eqn:Thm2:FormACDecompPart2}). It follows from Proposition \ref{prop:TrivialEquivalentCycles} that every cycle $f_{i,j,m}$ is mapped by $\reduc{\dsig;0}$ to the corresponding trivial walk $(\mu_i)$. Since the trivial walks do not contribute to the nesting product, each of these factors can be deleted. Thus we find
\begin{align*}
       \reduc{\dsig;l}\big[c\big] &= \alpha\mu_2\cdots\mu_L\alpha
       \nest \left[\Nest_{j=1}^{N_L} \reduc{\dsig;0}\big[d_{L,j}\big]\right]
       \nest \cdots
       \nest \left[\Nest_{j=1}^{N_2} \reduc{\dsig;0}\big[d_{2,j}\big]\right]\\
       &=\alpha\mu_2\cdots\mu_L\alpha
       \nest \left[\Nest_{j=1}^{N_L} q_{L,j}\right]
       \nest \cdots
       \nest \left[\Nest_{j=1}^{N_2} q_{2,j}\right]
       \\
& = q,
\end{align*}
     where the second equality follows from the induction hypotheses $\mathbb{L}(m_i,0)$ for $2\le i \le L$, where $m_i < n$ is the number of vertices in $\G\backslash\alpha\cdots\mu_{i-1}$ (i.e.~the graph on which $d_{i,j}$ exists). \\

     \item[b)] The base cycle of $c$ is not longer than $k_l$. Then $c$ has the form of Eq.~\eqref{eqn:Thm2:FormBCDecompGroup}. In this case the result of applying $\reduc{\dsig;l}$ to $c$ depends on the values of the indices $s_2,\ldots,s_L$, where $s_i$ is the index of the first child off $\mu_i$ that is not $[k_{l+1},\ldots,k_{D-1},0]$-structured. Then for each vertex $\mu_i$, $\reduc{\dsig;l+1}$ is applied to the first $\min(s_i,N_i)$ children, and $\reduc{\dsig;0}$ is applied to any remaining children.

         To proceed, we consider the two sets of vertices $\mu_{j_1},\ldots,\mu_{j_q}$ and $\mu_{i_1},\ldots,\mu_{i_p}$ separately. Consider a representative vertex from the first set: say $\mu_{j_1}$. All of the $R_1$ children off $\mu_{j_1}$ (i.e.~those denoted $g_{1,m}$ in Eq.~\eqref{eqn:Thm2:FormBCDecompPart1}) are $[k_{l+1},\ldots,k_{D-1},0]$-structured. It follows that $s_{j_1} = R_1 + 1$, so that $\reduc{\dsig;l+1}$ is applied to all $R_1$ children off $\mu_{j_1}$. Since all of these children are $[k_{l+1},\ldots,k_{D-1},0]$-structured, $\reduc{\dsig;l+1}$ maps each of them to the corresponding trivial walk $(\mu_{j_1})$.  Applying $\reduc{\dsig;l}$ to $c$ thus removes all of the children $g_{1,1}, \ldots, g_{1,R_1}$. An identical argument holds for the children off the vertices $\mu_{j_2},\ldots,\mu_{j_q}$.

         Next consider a representative vertex from the second set: say $\mu_{i_1}$. Then the first $P_{1,1}$ children off $\mu_{i_1}$ (i.e.~those denoted $f_{1,1,m}$ in Eq.~\eqref{eqn:Thm2:FormBCDecompPart1}) are all $[k_{l+1},\ldots,k_{D-1},0]$-structured. Thus $s_{i_1}$ cannot be smaller than $P_{1,1} + 1$. We now claim that $s_{i_1}$ is equal to $P_{1,1} + 1$: that is, $d_{1,1}$ is not $[k_{l+1},\ldots,k_{D-1},0]$-structured. The proof of this is straightforward: recall from Eq.~\eqref{eqn:Thm2:FormBCDecompPart2} that $d_{1,1}$ is an element of $\dress{\dsig;l+1}{\G\backslash\alpha\cdots\mu_{i_1-1}}[q_{k,1}]$. Thus it follows from the induction hypothesis $\mathbb{L}(m_1,l+1)$, where $m_1 < n$ is the number of vertices in $\G\backslash\alpha\cdots\mu_{i_1-1}$, that $\reduc{\dsig;l+1}[d_{1,1}] = q_{k,1}$. Because $q_{k,1}$ is a cycle and $(\mu_{i_1})$ is not a cycle, then $q_{k,1} \neq (\mu_{i_1})$. Hence $\reduc{\dsig;l+1}[d_{1,1}] \neq (\mu_{i_1})$, which proves the claim that $s_{i_1} = P_{1,1} + 1$. Thus $\reduc{\dsig;l+1}$ is applied to the first $P_{1,1} + 1$ children off $\mu_{i_1}$: that is, those up to and including $d_{1,1}$. Since the first $P_{1,1}$ cycles (i.e.~those denoted $f_{1,1,m}$) are $[k_{l+1},\ldots,k_{D-1},0]$-structured, $\reduc{\dsig;l+1}$ maps each of them to $(\mu_{i_1})$. Further, $\reduc{\dsig;0}$ is applied to all subsequent children. Since the child cycles that are a result of dressing (i.e.~those denoted $f_{1,j,m}$ for $2 \le j \le N_1+1$) are all $\dsig$-structured, $\reduc{\dsig;0}$ maps each of them to $(\mu_{i_1})$. Applying $\reduc{\dsig;l}$ to $c$ thus removes all of the children $f_{1,1,1},\ldots,f_{1,N_1+1,P_{1,N_1+1}}$. An identical argument holds for the vertices $\mu_{i_2},\ldots, \mu_{i_p}$.Thus we have
     \begin{align*}
        \reduc{\dsig;l}\big[c\big] &= \alpha\mu_2\cdots\mu_L\alpha
         \nest \reduc{\dsig;l+1}\big[d_{p,1}\big] \nest \left[ \Nest_{j=2}^{N_p} \reduc{\dsig;0}\big[d_{p,j}\big] \right]
         \nest \cdots
         \nest \reduc{\dsig;l+1}\big[d_{1,1}\big] \nest \left[ \Nest_{j=2}^{N_1} \reduc{\dsig;0}\big[d_{1,j}\big] \right]\\
        &= \alpha\mu_2\cdots\mu_L\alpha
         \nest q_{p,1}\nest \left[ \Nest_{j=2}^{N_p} q_{p,j} \right]
         \nest \cdots
         \nest q_{1,1}\nest \left[ \Nest_{j=2}^{N_1} q_{1,j}\right]\\
         &= q,
    \end{align*}
    where in writing the first equality we have deleted all of the cycles that are mapped to trivial walks, as discussed above, and the second equality follows from the induction hypotheses $\mathbb{L}(m_k,0)$ for $1 \le k \le p$, where $m_k$ is the number of vertices in $\G\backslash\alpha\cdots\mu_{i_k-1}$.\\
\end{itemize}

     \noindent\textit{Base case.} The base case of the induction is the statement $\mathbb{L}(1,l)$ for $l = 0,1,\ldots,D$. We prove this statement by considering the two possible graphs on a single vertex.
\begin{itemize}
         \item[a)] $\G$ has no loop. Then $\G$ contains no cycles, and
         $Q^{\dsig;l}_{\G;\alpha}$ is empty for all $l = 0,1,\ldots, D$. Thus $\mathbb{L}(1,l)$ is vacuously true.
\item[b)] $\G$ has a loop $\alpha\alpha$. Then two possibilities exist:
\begin{itemize}
           \item[i.] $ 0 \le l \le D-1$. Then $Q^{\dsig;l}_{\G;\alpha}$ is empty, and $\mathbb{L}(1,l)$ is vacuously true.
           \item[ii.] $l = D$. Then $Q^{\dsig;D}_{\G;\alpha} = \{\alpha\alpha\}$. Setting $q = \alpha\alpha$ we find that $\dress{\dsig;D}{\G}[q] = \dress{\dsig;D}{\G}[\alpha\alpha] =\{\alpha\alpha\}$. Setting $c = \alpha\alpha$ yields $\reduc{\dsig;D}[c] = \reduc{\dsig;D}[\alpha\alpha] = \alpha\alpha = q$, and $\mathbb{L}(1,D)$ holds.\\
\end{itemize}
\end{itemize}

\noindent\textbf{Backward.} Let $n \ge 1$ be the number of vertices in $\G$, and
     $\mathbb{L}(n,l)$ for $n\ge 1$ and $0 \le l \le D$ be the statement that for $q \in Q^{\dsig;l}_{\G;\alpha}$ and an arbitrary cycle $c$,
     \begin{align*}
        \reduc{\dsig;l}\big[c\big] = q \implies c \in \dress{\dsig;l}{\G}\big[q\big].
    \end{align*}
    The proof again proceeds by induction on $n$, with the same dependency structure as in the forwards direction. Throughout this section we assume that $c$ has decomposition
\begin{align}
     \alpha\beta_2\cdots\beta_P\alpha
     \nest \left[\Nest_{j=1}^{M_P} c_{P,j}\right]
     \nest \cdots
    \nest \left[\Nest_{j=1}^{M_2} c_{2,j}\right]\label{Thm2:Backward:CDecomp}
\end{align}
     for some $P \ge 1$ and $M_2,\ldots, M_P \ge 0$, where $c_{ij} \in C_{\G\backslash\alpha\cdots\beta_{i-1};\beta_i}$ for $2\le i \le P$ and $1\le j \le M_i$. We then show that the equality $\reduc{\dsig;l}[c] = q$ implies that the decomposition of $c$ matches the form of the elements of $\dress{\dsig;l}{\G}[q]$ (i.e.~either Eq.~\eqref{eqn:Thm2:FormACDecompGroup} or \eqref{eqn:Thm2:FormBCDecompGroup}) so that $c$ is an element of $\dress{\dsig;l}{\G}[q]$.
\\

\noindent\textit{Inductive step.}
The inductive step corresponds to the statements
\begin{align*}
\mathbb{L}(m,0) \text{ for all $m < n$ } &\implies \mathbb{L}(n,D),\\
\mathbb{L}(m,l+1) \text{ and } \mathbb{L}(m,0) \text{ for all $m < n$ } &\implies \mathbb{L}(n,l) \text{ for $0 \le l \le D-1$.}
\end{align*}
     Since the base cycle of $\reduc{\dsig;l}[c]$ is the same as that of $c$, it follows from the equality $\reduc{\dsig;l}[c] = q$ that the base cycle of $c$ is the same as the base cycle of $q$. Thus we write $\alpha\mu_2\cdots\mu_L\alpha$ instead of $\alpha\beta_2\cdots\beta_P\alpha$. As before, there are two possibilities:
\begin{enumerate}
\item[1)] The base cycle of $c$ is longer than $k_l$. Then when $\reduc{\dsig;l}$ is applied to $c$, $\reduc{\dsig;0}$ is applied to every child $c_{i,j}$, so that
\begin{align*}
\reduc{\dsig;l}\big[c\big] = \alpha\mu_2\cdots\mu_L\alpha
\nest \left[\Nest_{j=1}^{M_L}\reduc{\dsig;0}\big[c_{L,j}\big]\right]
\nest \cdots
\nest \left[\Nest_{j=1}^{M_2}\reduc{\dsig;0}\big[c_{2,j}\big]\right].
\end{align*}
Meanwhile, $q$ has the structure given in Eq.~\eqref{eqn:StructureOfQKL:FormA1}. The equality $\reduc{\dsig;l}\big[c\big] = q$ thus requires that $\Nest_{j=1}^{M_i}\reduc{\dsig;0}\big[c_{i,j}\big] = \Nest_{j=1}^{N_i}q_{i,j}$ for $2 \le i \le L$. Because some of the factors $\reduc{\dsig;0}\big[c_{ij}\big]$ may be trivial, it does not directly follow that $M_i = N_i$, but only that $M_i \ge N_i$. Specifically, we conclude that among the children $c_{i,1},\ldots,c_{i,M_i}$ there are $N_i$ that are mapped by $\reduc{\dsig;0}$ to $q_{i,1},\ldots, q_{i,N_i}$. Let $1 \le I_{i,1} < I_{i,2} < \cdots < I_{i,N_i} \le M_i$ be the indices of those cycles, so that $\reduc{\dsig;0}\big[c_{i,I_{i,j}}\big] = q_{i,j}$, and define $I_{i,0} = 0$. Then the sequence of $M_i$ cycles can be rewritten as
\begin{align*}
\Nest_{j=1}^{M_i}\reduc{\dsig;0}\big[c_{i,j}\big] =
\Nest_{j=1}^{N_i}\left[\Nest_{m=I_{i,j-1}+1}^{I_{i,j}-1} \reduc{\dsig;0}\big[c_{i,m}\big] \nest \reduc{\dsig;0}\big[c_{i,I_{i,j}}\big]\right]
\nest \Nest_{m = I_{i,N_i}+1}^{M_i} \reduc{\dsig;0}\big[c_{i,m}\big]
\end{align*}
where
\begin{align*}
\reduc{\dsig;0}\big[c_{ij}\big] =
\begin{cases}
q_{i,p} & \text{if $j = I_{i,p}$ for some $1 \le p \le N_i$,}\\
(\mu_i) & \text{otherwise.}
\end{cases}
\end{align*}
       Now from the induction hypothesis $\mathbb{L}(m_i,0)$, where $m_i < n$ is the number of vertices in $\G\backslash\alpha\cdots\mu_{i-1}$, it follows that $c_{i,I_{i,p}} \in \dress{\dsig;0}{\G\backslash\alpha\cdots\mu_{i-1}}\big[q_{i,p}\big]$ for $1 \le p \le N_i$, while from Proposition \ref{prop:TrivialEquivalentCycles} it follows that $c_{i,j} \in C^{\dsig}_{\G\backslash\alpha\cdots\mu_{i-1};\mu_i}$ if $j \notin \{I_{i,1},\ldots, I_{i,N_i}\}$. The decomposition of $c$ thus has the form of Eq.~\eqref{eqn:Thm2:FormACDecompGroup}, and so $c \in \dress{\dsig;l}{\G}\big[q\big]$.\\

\item[2)] The base cycle of $c$ is not longer than $k_l$. In this case $\reduc{\dsig;l}[c]$ depends on the values of $s_2,\ldots,s_L$, where $s_i$ is the index of the first child off $\mu_i$ that is not $[k_{l+1},\ldots,k_{D-1},0]$-structured. Meanwhile, $q$ has the structure of Eq.~\eqref{eqn:StructureOfQKL:FormBGroup}. In order to proceed, we note that the values that each $s_i$ can take are constrained by the equality $\reduc{\dsig;l}[c] = q$. Specifically, note that the cycle $\reduc{\dsig;l}[c]$ has a maximum of $N_i - s_i + 1$ children off $\mu_i$. (This follows from the observation that the first $s_i - 1$ child cycles are mapped by $\reduc{\dsig;l+1}$ to $(\mu_i)$ and disappear, by the definition of $s_i$.) Since $q$ has no cycles off (for example) $\mu_{j_1}$, we conclude that $s_{j_1} = S_{j_1} + 1$, or in other words, all of the child cycles $c_{j_1,1},\ldots,c_{j_1,S_{j_1}}$ off $\mu_{j_1}$ in $c$ (see Eq.~\eqref{Thm2:Backward:CDecomp}) are $[k_{l+1},\ldots,k_{D-1},0]$-structured. An identical argument holds for the vertices $\mu_{j_2},\ldots, \mu_{j_q}$.

Conversely, we conclude that since $q$ does have children off (for example) $\mu_{i_1}$, then $s_{i_1} \neq N_{i_1} + 1$. Then $\reduc{\dsig;l+1}$ is applied to the first $s_{i_1}$ children off $\mu_{i_1}$ (thereby mapping the first $s_{i_1}-1$ of them to $(\mu_{i_1})$), and $\reduc{\dsig;0}$ is applied to any subsequent children. It follows that the first $s_{i_1} -1$ children off $\mu_{i_1}$ are $[k_{l+1},\ldots,k_{D-1},0]$-structured. An identical argument holds for each of the vertices $\mu_{i_2},\ldots,\mu_{i_p}$. We now consider the structure of the remaining children off $\mu_{i_k}$, i.e.~those with indices $s_{i_k}$ through to $M_{i_k}$.\\

Since all of the children off $\mu_{j_1},\ldots,\mu_{j_q}$ and the first $s_{i_k}-1$ children off $\mu_{i_k}$ for $1 \le k \le p$ are each mapped to the corresponding trivial walk, we have
\begin{align*}
\reduc{\dsig;l}\big[c\big] = \alpha\mu_2\cdots\mu_L\alpha
    &\nest\reduc{\dsig;l+1}\big[c_{i_p,s_{i_p}}\big] \nest
    \Nest_{j=s_{i_p}+1}^{M_{i_p}} \reduc{\dsig;0}\big[c_{i_p,j}\big]\\
    &\nest \cdots \\
    &\nest\reduc{\dsig;l+1}\big[c_{i_1,s_{i_1}}\big] \nest
    \Nest_{j=s_{i_1}+1}^{M_{i_1}} \reduc{\dsig;0}\big[c_{i_1,j}\big],
\end{align*}
where we have already deleted all of the cycles that reduce to trivial walks. We now compare this expression with the decomposition of $q$ given in Eq.~\eqref{eqn:StructureOfQKL:FormBGroup}. The equality $\reduc{\dsig;l+1}[c] = q$ implies that the cycles off each vertex must match. Consider the vertex $\mu_{i_1}$ as an example. Then
\begin{equation*}
\reduc{\dsig;l+1}\big[c_{i_1,s_{i_1}}\big] \nest
    \Nest_{j=s_{i_1}+1}^{S_{i_1}} \reduc{\dsig;0}\big[c_{i_1,j}\big] = \Nest_{j=1}^{N_1} q_{1,j}.
\end{equation*}
For this to hold, it must be the case that $\reduc{\dsig;l+1}\big[c_{i_1,s_{i_1}}\big] = q_{1,1}$. Then it follows from the induction hypothesis $\mathbb{L}(m_1,l+1)$ (where $m_1 < n$ is the number of vertices in $\G\backslash\alpha\cdots\mu_{i_1}$) that $c_{i_1,s_{i_1}} \in \dress{\dsig;l+1}{\G\backslash\alpha\cdots\mu_{i_1-1}}\big[q_{1,1}\big]$. We further conclude that $N_1-1$ of the factors $\reduc{\dsig;0}[c_{i_1,s_{i_1}-1}]$,$\ldots$,$\reduc{\dsig;0}[c_{i_1,M_{i_1}}]$ are equal to $q_{1,2},\ldots,q_{1,N_1}$, and the remaining factors are equal to the trivial walk $(\mu_{i_1})$.
Let $s_{i_1}+1 \le I_{1,2} < I_{1,3} < \cdots < I_{1,N_{1}} \le S_{i_1}$ be the indices of the cycles mapped to $q_{1,2},\ldots, q_{1,N_1}$, and define $I_{1,1} = s_{i_1}$. Then the sequence of the last $M_{i_1} - s_{i_1}$ child cycles off $\mu_{i_1}$ can be rewritten as
\begin{subequations}\label{eqn:Thm3:detailedStructureGroup}
\begin{equation}
  \Nest_{j=s_{i_1}+1}^{S_{i_1}} c_{i_1,j} =
   \Nest_{j=2}^{N_1}\left[\Nest_{m=I_{1,j-1}+1}^{I_{1,j}-1}\!\!c_{i_1,m}\:\nest \:c_{i_1,I_{1,j}}\right]
   \nest \Nest_{m=I_{1,N_1}+1}^{S_{i_1}} c_{i_1,m}
\end{equation}
where
\begin{align*}
\reduc{\dsig;0}\big[c_{i_1,j}\big] =
\begin{cases}
q_{1,k} & \text{if $j = I_{1,k}$ for some $2\le k \le N_1$,} \\
(\mu_{i_1}) & \text{if $j \notin \{ I_{1,2},\ldots,I_{1,N_1}\}$.}
\end{cases}
\end{align*}
\end{subequations}
     Then it follows from the induction hypothesis $\mathbb{L}(m_1,0)$, where $m_1$ is the number of vertices in $\G\backslash\alpha\cdots\mu_{i_1-1}$, that $c_{i_1,I_{1,k}} \in \dress{\dsig;0}{\G\backslash\alpha\cdots\mu_{i_1-1}}\big[q_{1,k}\big]$. Further, from Proposition \ref{prop:TrivialEquivalentCycles} we have that $c_{i_1,j} \in C^{\dsig}_{\G\backslash\alpha\cdots\mu_{i_1-1};\mu_{i_1}}$ for $j \notin \{I_{1,2},\ldots,I_{1,N_{1}}\}$. An identical argument holds for each of the vertices $\mu_{i_2},\ldots,\mu_{i_p}$.\\

    Piecing all of the above conclusions together, we find that the decomposition of $c$ matches the form of Eq.~\eqref{eqn:Thm2:FormBCDecompGroup}, and so $c$ is an element of the set $ \dress{\dsig;l}{\G}\big[q\big]$.\\
\end{enumerate}

\noindent\textit{Base case.}
     The base case of the induction is the statement $\mathbb{L}(1,l)$ for $l = 0,1,\ldots, D$. We prove this statement by considering each of the two possible graphs on a single vertex.
\begin{itemize}
         \item[a.] $\G$ has no loop. Then $\G$ contains no cycles, so $C_{\G;\alpha}$ is empty, and $\mathbb{L}(1,l)$ is vacuously true.
\item[b.] $\G$ has a loop $\alpha\alpha$. Then two cases exist:
\begin{itemize}
           \item[i)] $ 0 \le l \le D-1$. Then $Q^{\dsig;l}_{\G;\alpha}$ is empty, and $\mathbb{L}(1,l)$ is vacuously true.
           \item[ii)] $l = D$. Then $Q^{\dsig;D}_{\G;\alpha} = \{\alpha\alpha\}$. The only cycle that satisfies $\reduc{\dsig;D}[c] = \alpha\alpha$ is $c=\alpha\alpha$, which is also a member of $\dress{\dsig;D}{\G}[\alpha\alpha]$, so $\mathbb{L}(1,l)$ is true.
\end{itemize}
\end{itemize}
\end{proof}

This concludes the proof that the cycle dressing operator $\dress{\dsig;l}{\G}$ produces the pre-image of a given $(\dsig,l)$-irreducible cycle under $\reduc{\dsig;l}$.

\subsection{The walk reduction and walk dressing operators are inverses}
\label{subsec:RDeltaInverses}

In this section we prove that the walk reduction and walk dressing operators of Definitions \ref{defn:WalkReductionOperator} and \ref{defn:WalkDressingOperator} are inverses of one another, in the sense that for a given $\dsig$-irreducible walk $i$ the set $\Dress{\dsig}{\G}$ contains all walks, and only those walks, that satisfy $\Reduc{\dsig}[w] = i$.
\begin{theorem}\label{thm:RDeltaInverse}
Let $\G$ be a digraph, $\dsig$ be a dressing signature, $w \in W_{\G;\alpha\omega}$ be a walk from $\alpha$ to $\omega$ on $\G$, and $i \in I^{\dsig}_{\G;\alpha\omega}$ be a $\dsig$-irreducible walk from $\alpha$ to $\omega$ on $\G$. Then $w$ is an element of $\Dress{\dsig}{\G}[i]$ if and only if $\Reduc{\dsig}[w] = i$.
\end{theorem}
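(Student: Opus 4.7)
The plan is to mirror the strategy used in the proof of Lemma \ref{lem:rdeltaInverses}, invoking that lemma as a black box to handle the cycle-level reductions and dressings. The key simplification at the walk level is that $\Reduc{\dsig}$ leaves the base path of a walk untouched and simply applies $\reduc{\dsig;0}$ to each of its immediate child cycles (Definition \ref{defn:WalkReductionOperator}), while $\Dress{\dsig}{\G}$ likewise preserves the base path of $i$ and only modifies the sequence of children off each of its vertices (Definition \ref{defn:WalkDressingOperator}). Consequently, the theorem reduces to matching two sequences of closed walks off each vertex of the shared base path. No further recursion on a depth parameter is needed; the heavy lifting has already been done, once and for all, by Lemma \ref{lem:rdeltaInverses}.

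For the forward direction I would take an arbitrary $w \in \Dress{\dsig}{\G}[i]$ and read off its prime decomposition from Definition \ref{defn:WalkDressingOperator}: the base path coincides with that of $i$, and off each vertex $\mu_k$ the children consist of an alternating sequence of $\dsig$-structured filler cycles $f_{k,j,m}$ and cycles $d_{k,j} \in \dress{\dsig;0}{\G\backslash\alpha\cdots\mu_{k-1}}[c_{k,j}]$. Applying $\Reduc{\dsig}$ then calls $\reduc{\dsig;0}$ on each of these children, which by Proposition \ref{prop:TrivialEquivalentCycles} sends every $f_{k,j,m}$ to the trivial walk $(\mu_k)$ and by Lemma \ref{lem:rdeltaInverses} sends every $d_{k,j}$ back to $c_{k,j}$. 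The trivial walks are identities for nesting and drop out of the product, recovering the prime decomposition of $i$ exactly.

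For the backward direction I would begin with the prime decomposition of $w$, say with base path $\alpha\beta_2\cdots\beta_P\omega$ and children $c'_{k,m}$ for $1 \le k \le P+1$ and $1 \le m \le M_k$. Applying $\Reduc{\dsig}$ and invoking uniqueness of the prime factorisation (Theorem \ref{thm:PrimeFactorisationTheorem}) forces the base path of $w$ to coincide with that of $i$ and, off each vertex $\mu_k$, the concatenated sequence $\reduc{\dsig;0}[c'_{k,1}] \nest \cdots \nest \reduc{\dsig;0}[c'_{k,M_k}]$ to equal the sequence $c_{k,1} \nest \cdots \nest c_{k,N_k}$ of children of $i$. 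Since nesting of closed walks off a common vertex is concatenation, after deleting the factors equal to $(\mu_k)$ the surviving reductions must match $c_{k,1},\ldots,c_{k,N_k}$ term by term. This picks out indices $1 \le I_{k,1} < \cdots < I_{k,N_k} \le M_k$ with $\reduc{\dsig;0}[c'_{k,I_{k,j}}] = c_{k,j}$, all other $c'_{k,m}$ reducing to $(\mu_k)$. Proposition \ref{prop:TrivialEquivalentCycles} identifies those remaining $c'_{k,m}$ as $\dsig$-structured cycles, and Lemma \ref{lem:rdeltaInverses} gives $c'_{k,I_{k,j}} \in \dress{\dsig;0}{\G\backslash\alpha\cdots\mu_{k-1}}[c_{k,j}]$. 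Reading the resulting structure back against Definition \ref{defn:WalkDressingOperator} shows $w \in \Dress{\dsig}{\G}[i]$.

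The main obstacle I expect is purely notational: marshalling the indices in the backward direction so that the placement of the $\dsig$-structured fillers before the first distinguished child, between consecutive distinguished children, and after the last distinguished child exactly matches the template in Definition \ref{defn:WalkDressingOperator}. This is not conceptually hard, but requires care because nesting within a single hedge is non-commutative concatenation, so the relative positions of fillers and distinguished children are rigidly fixed. Once this bookkeeping is made precise, both directions follow, and in fact the whole theorem may be regarded as a direct walk-level lift of Lemma \ref{lem:rdeltaInverses}.
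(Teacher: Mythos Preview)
Your proposal is correct and follows essentially the same approach as the paper: both directions are reduced to the cycle-level result of Lemma~\ref{lem:rdeltaInverses}, using Proposition~\ref{prop:TrivialEquivalentCycles} to dispose of the $\dsig$-structured filler cycles and uniqueness of the prime decomposition to force the base paths and per-vertex child sequences to match. The only cosmetic difference is that the paper labels the two implications ``Forward'' and ``Backward'' in the opposite order to yours.
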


\begin{proof}
We wish to show that for a $\dsig$-irreducible walk $i$ and an arbitrary walk $w$,
\begin{align}
  w \in \Dress{\dsig}{\G}[w] \iff \Reduc{\dsig}[w] = i.
\end{align}
We prove the forward and backward directions of this result separately. As for Lemma \ref{lem:rdeltaInverses}, the central idea is very simple. In the forward direction, we wish to show that if $w$ can be obtained by adding a (possibly empty) collection of resummable cycles to $i$, then those same cycles are removed (i.e.~mapped to the corresponding trivial walk) when $\Reduc{\dsig}$ is applied to $w$. In the backward direction, we wish to show that if deleting all resummable cycles from $w$ produces $i$, then applying $\Dress{\dsig}{\G}$ to $i$ re-adds this configuration of resummable cycles, thereby reproducing $w$.

The bulk of the proof in each direction follows directly from Lemma \ref{lem:rdeltaInverses}. We begin with some preliminary remarks that will be required in both sections of the proof.\\

\noindent\textbf{Preliminary remarks.}
Since $i$ is $\dsig$-irreducible, it follows from Theorem \ref{thm:StructureOfIK} that $i$ is a member of the set $
P_{\G;\alpha\omega}
\nest Q^{\dsig,0\:*}_{\G\backslash\alpha\cdots\mu_L;\omega}
\nest \cdots
\nest Q^{\dsig,0\:*}_{\G\backslash\alpha;\mu_2}
\nest Q^{\dsig,0\:*}_{\G;\alpha}
$ and so has decomposition given by Eq.~\eqref{eqn:StructureOfIK:wstruc}. Then applying $\Dress{\dsig}{\G}$ to $i$ adds zero or more $\dsig$-structured cycles to every vertex $\mu_i$ before every child $q_{i,j}$ and after the final child $q_{i,N_i}$, and additionally applies $\dress{\dsig;0}{\G\backslash\alpha\cdots\mu_{i-1}}$ to every child cycle $q_{i,j}$, thereby mapping $i$ to the set
\begin{align*}
\Dress{\dsig}{\G}\big[i\big] = \{\alpha\mu_2\cdots\mu_L\omega\}
&\nest \left[\Nest_{j=1}^{N_{L+1}} \left(C^{\dsig\:*}_{\G\backslash\alpha\cdots\mu_L;\omega} \nest \dress{\dsig;0}{\G\backslash\alpha\cdots\mu_L}\big[q_{L+1,j}\big]  \right)\right]
\nest C^{\dsig\:*}_{\G\backslash\alpha\cdots\mu_L;\omega}\\[2mm]
&\nest \cdots\\[2mm]
&\nest \left[\Nest_{j=1}^{N_1} \left(C^{\dsig\:*}_{\G;\alpha}\nest
\dress{\dsig;0}{\G}\big[q_{1,j}\big] \right) \right]
\nest C^{\dsig\:*}_{\G;\alpha}.
\end{align*}
It follows that a walk $w$ is an element of $\Dress{\dsig}{\G}[i]$ if and only if its decomposition into a simple path plus a collection of cycles is of the form
\begin{subequations}\label{eqn:Thm2:Proof:RequiredFormGroup}
\begin{equation}
\begin{aligned}
w = \alpha\mu_2\cdots\mu_L\omega
&\nest\left[\Nest_{j=1}^{N_{L+1}}\left(\Nest_{m=1}^{P_{L+1,j}} f_{L+1,j,m} \nest d_{L+1,j}\right)\right]
\nest \left[\Nest_{m=1}^{P_{L+1,N_{L+1}+1}} f_{L+1,N_{L+1}+1,m}\right]\\[2mm]
&\nest \cdots \\[2mm]
&\nest\left[\Nest_{j=1}^{N_1}\left(\Nest_{m=1}^{P_{1,j}} f_{1,j,m} \nest d_{1,j}\right)\right]
\nest \left[\Nest_{m=1}^{P_{1,N_1+1}} f_{1,N_1+1,m}\right]
\end{aligned}
\end{equation}
for some $P_{1,1},\ldots,P_{L+1,N_{L+1}+1} \ge 0$, where for indices in the relevant ranges
\begin{equation}
f_{i,j,m} \in C^\dsig_{\G\backslash\alpha\cdots\mu_{i-1};\mu_i} \quad\text{ and } \quad
d_{i,j} \in \dress{\dsig;0}{\G\backslash\alpha\cdots\mu_{i-1}}\big[q_{i,j}\big].
\end{equation}
\end{subequations}
Here the cycles denoted $f_{i,j,m}$ are $\dsig$-structured cycles that have been added off $\mu_i$ before every child cycle $q_{i,j}$ and after the final child $q_{i,N_i}$, and $d_{i,j}$ is an element from the set of cycles formed by applying $\dress{\dsig;0}{\G\backslash\alpha\cdots\mu_{i-1}}$ to $q_{i,j}$.\\

We now proceed to the body of the proof.\\

\noindent\textbf{Forward.}
We wish to prove that for a $\dsig$-irreducible walk $i$ and an arbitrary walk $w$
\begin{align*}
\Reduc{\dsig}[w] = i \implies w \in \Dress{\dsig}{\G}\big[i\big].
\end{align*}
We will show that it follows from the equality $\Reduc{\dsig}[w] = i$ that $w$ has the structure given in Eq.~\eqref{eqn:Thm2:Proof:RequiredFormGroup}, and is thus an element of $\Dress{\dsig}{\G}\big[i\big]$.\\

\noindent Let $w$ have decomposition
\begin{align}
w = \alpha\beta_2\cdots\beta_P\omega
\nest \left[\Nest_{j=1}^{M_{P+1}}c_{P+1,j}\right]
\nest \cdots
\nest \left[\Nest_{j=1}^{M_1}c_{1,j}\right]\label{eqn:Thm2:WDecomp}
\end{align}
where $\alpha\beta_2\cdots\beta_P\omega$ is a simple path of length $P \ge 0$ and $c_{i,j} \in C_{\G\backslash\alpha\cdots\mu_{i-1}}$. Then when $\Reduc{\dsig}$ is applied to $w$, $\reduc{\dsig;0}$ is applied to every child cycle; thus
\begin{align*}
\Reduc{\dsig}\big[w\big] = \alpha\beta_2\cdots\beta_P\omega
\nest \left[\Nest_{j=1}^{M_{P+1}}\reduc{\dsig;0}\big[c_{P+1,j}\big]\right]
\nest \cdots
\nest \left[\Nest_{j=1}^{M_1}\reduc{\dsig;0}\big[c_{1,j}\big]\right].
\end{align*}
By comparing with Eq.~\eqref{eqn:StructureOfIK:wstruc}, we see that $\Reduc{\dsig}\big[w\big] = i$ can only hold if $w$ and $i$ have the same base path: thus $\alpha\beta_2\cdots\beta_P\omega = \alpha\mu_2\cdots\mu_L\omega$, and $P = L.$ Further, the equality requires that $N_i$ of the factors $\reduc{\dsig;0}\big[c_{i,1}\big],\ldots, \reduc{\dsig;0}\big[c_{i,M_i}\big]$ be equal to $q_{i,1}, \ldots, q_{i,N_i}$, and the remaining $M_i - N_i$ are equal to the trivial walk $(\mu_i)$. Let $I_{i,k}$ for $1\le k \le N_i$ index the cycles that are mapped by $\reduc{\dsig;0}$ to $q_{i,k}$, and set $I_{i,0} = 0$. Then we can rewrite the sequence of $M_1$ cycles off $\alpha$ as
\begin{align}
  \Nest_{j=1}^{M_1} c_{1,j} = \Nest_{k=1}^{N_1}\left(
\Nest_{m={I_{1,k-1}+1}}^{I_{1,k}-1}\!\!c_{1,m} \:\nest \:c_{1,I_{1,k}}\right)
\nest\Nest_{m=I_{1,N_1}+1}^{M_1} c_{1,m},
\end{align}
where
\begin{align*}
\reduc{\dsig;0}\big[c_{1,j}\big] =
\begin{cases} q_{1,k} & \text{ if $j = I_{i,k}$ for some $1\le k \le N_i$,}\\
(\mu_i) & \text{ if $j \notin \{I_{i,1},\ldots,I_{i,N_i}\}$.}
\end{cases}
\end{align*}
Then it follows from Lemma \ref{lem:rdeltaInverses} and Proposition \ref{prop:TrivialEquivalentCycles} that
\begin{align*}
c_{1,j} \in \begin{cases}
\dress{\dsig;0}{\G}\big[q_{1,k}\big] & \text{if $j = I_{i,k}$ for some $1\le k \le N_i$,}\\
C^\dsig_{\G;\alpha} & \text{if $j \notin \{I_{i,1},\ldots,I_{i,N_i}\}$.}
\end{cases}
\end{align*}
An identical argument holds for the sequence of cycles off every other vertex $\mu_2,\ldots,\omega$. The decomposition of $w$ thus matches the structure of Eq.~\eqref{eqn:Thm2:Proof:RequiredFormGroup}, and so $w$ is an element of $\Dress{\dsig}{\G}\big[i\big]$.\\

\noindent\textbf{Backward.}
We wish to prove that for a $\dsig$-irreducible walk $i$ and an arbitrary walk $w$,
\begin{align}
w \in \Dress{\dsig}{\G}\big[i\big] \implies \Reduc{\dsig}[w] = i.
\end{align}
First note that if $w$ is an element of $\Dress{\dsig}{\G}\big[i\big]$ then the decomposition of $w$ has the structure given in Eq.~\eqref{eqn:Thm2:Proof:RequiredFormGroup}. Then when $\Reduc{\dsig}$ is applied to $w$, $\reduc{\dsig;0}$ is applied to every child cycle. Since all of the cycles denoted $f_{i,j,m}$ in Eq.~\eqref{eqn:Thm2:Proof:RequiredFormGroup} are $\dsig$-structured, they are each mapped to the corresponding trivial walk $(\mu_i)$. Since the trivial walk does not contribute to the nesting product, this corresponds to deleting each of the cycles $f_{i,j,m}$. We thus find
\begin{align*}
\Reduc{\dsig}\big[w\big] &= \alpha\mu_2\cdots\mu_L\omega
\nest \left[\Nest_{j=1}^{N_{L+1}} \reduc{\dsig;0}\big[d_{L+1,j}\big]\right]
\nest \cdots
\nest \left[\Nest_{j=1}^{N_{1}} \reduc{\dsig;0}\big[d_{1,j}\big]\right]\\
&=\alpha\mu_2\cdots\mu_L\omega\nest \left[\Nest_{j=1}^{N_{L+1}} q_{L+1,j}\right]
\nest \cdots
\nest \left[\Nest_{j=1}^{N_{1}} q_{1,j}\right]\\
&= i,
\end{align*}
where the second equality follows from noting that $d_{i,j}$ is an element of $\dress{\dsig;0}{\G\backslash\alpha\cdots\mu_{i-1}}[q_{i,j}]$ and applying Lemma \ref{lem:rdeltaInverses}.
\end{proof}

We have therefore proven that the explicit definitions of $\Reduc{\dsig}$ and $\Dress{\dsig}{\G}$ given in Sections \ref{sec:WalkReductionOperators} and \ref{sec:WalkDressingOperator} satisfy the properties that we assumed in Definition \ref{defn:ReductionDressingAxioms}. It follows that the family of sets $\{\Dress{\dsig}{\G}[i] : i \in I^{\dsig}_\G\}$, where $I^\dsig_\G$ is given in Theorem \ref{thm:StructureOfIK}, form a partition of the walk set $W_\G$.
\section{The sum of all walks on a directed graph}\label{sec:S5:DressedWalkSums}
In Section \ref{sec:S3:FamilyOfPartitions} we showed that for any digraph $\G$ and dressing signature $\dsig$, which may be freely chosen, the collection of walk sets formed by applying the walk dressing operator $\Dress{\dsig}{\G}$ to every $\dsig$-irreducible walk on $\G$ forms a partition of $W_\G$. An immediate consequence of this result is that the sum of all walks in $W_\G$ can be separated into a sum over $\dsig$-irreducible walks and a sum over the walks in the set $\Dress{\dsig}{\G}[i]$ for each $\dsig$-irreducible walk $i$, with the guarantee that this restructuring does not lead to any walk in the sum being double-counted or missed out. In other words, for any dressing signature $\dsig$ we have that
\begin{align*}
  \sum_{w\,\in\,W_{\G}} w = \sum_{i \,\in\,I^{\dsig}_\G} \:\sum_{w\,\in\,\Dress{\dsig}{\G}[i]} w,
\end{align*}
where $I^\dsig_\G$ is the set of $\dsig$-irreducible walks, and $\Dress{\dsig}{\G}[i]$ is the walk dressing operator defined in Section \ref{sec:WalkDressingOperator}.

In this section we exploit this observation to develop a family of partially-resummed series representations for the sum of all walks on $\G$. The central point to note is that $\Dress{\dsig}{\G}[i]$ is a set containing $i$ plus all walks that can be obtained by nesting one or more resummable cycles into $i$. It follows that summing over all elements of $\Dress{\dsig}{\G}[i]$ is equivalent to summing over all possible configurations of resummable cycles that can be added to $i$. As we show later in this section, this summation takes the form of an infinite geometric series of resummable cycles off each vertex of $i$, which can be formally resummed to a closed-form expression. The end effect of summing over all elements of $\Dress{\dsig}{\G}[i]$ is to replace each vertex $\mu$ of $i$ by a \key{dressed vertex} $(\mu)_\G^\dsig$ which represents the sum of all possible configurations of resummable cycles on $\G$ off $\mu$. In this way the sum over all walks on $\G$ is recast as a sum over $\dsig$-irreducible walks with dressed vertices.

This section is structured as follows. In Section \ref{sec:51:ResummedCharSeries} we introduce formal power series, which provide the mathematical underpinning for the ensuing discussion of sums of walks. We define the aforementioned dressed vertex $(\mu)_\G^\dsig$ as the sum of all resummable cycles off $\mu$ on $\G$, and show that this sum has the form of an infinite geometric series of cycles and can thus be formally resummed. Finally, we present in Theorem \ref{thm:CharSeriesIsDressedKIrred} the main result of this section: the reformulation of the sum of all walks on $\G$ as a sum of dressed $\dsig$-irreducible walks.

In Section \ref{sec:52:ResummedWeightSeries} we extend the resummation results of Section \ref{sec:51:ResummedCharSeries} to the case of weighted digraphs. By following an analogous procedure to that outlined above, we show that the sum over all walk weights on a weighted directed graph can be recast as a sum of the weights of the $\dsig$-irreducible walks, each of which is dressed so as to include the weights of all possible configurations of resummable cycles off them. These results make it possible to develop new partially-resummed series representations for any problem that can be formulated as a sum over walk weights on a directed graph.

\subsection{Resummed series for the sum of all walks on a digraph}\label{sec:51:ResummedCharSeries}
In order to discuss the sum of all walks on a digraph $\G$, we must first introduce the concept of a formal power series. We remain as brief as possible, referring to the literature for further details \cite{Kuich1986,Berstel1988,Droste2009}.

\begin{definition}[Formal power series in $W_\G$]
Let $\G$ be a digraph, $W_\G$ be the set of all walks on $\G$, and $\mathbb{Z}$ be the set of integers. Then a formal power series in $W_\G$ over $\mathbb{Z}$ is a mapping $f:W_\G \rightarrow \mathbb{Z}$. The value of $f$ at a particular $w \in W_\G$, also termed the coefficient of $w$ in $f$, is denoted by $(f,w)$, and $f$ itself is written as a formal sum
\begin{align}
  f = \sum_{w \,\in \,W_\G} (f,w)\, w.
\end{align}
The space of all formal power series in $W_\G$ over $\mathbb{Z}$ is denoted by $\fps{\mathbb{Z}}{W_\G}$.
\end{definition}

Sums of walks on a digraph $\G$ are elements of $\fps{\mathbb{Z}}{W_\G}$. Of particular interest is the sum of walks that are found in a particular subset of $W_\G$.

\begin{definition}[Characteristic series of a set of walks]
Let $A \subseteq W_\G$ be a subset of the set of all walks on $\G$. Then the \key{characteristic series} of $A$ is the formal power series $f_{\!A}$ with coefficients
\begin{align}
  (f_{\!A},w) = \begin{cases}
    1 & \text{if $w \in A$,}\\
    0 & \text{if $w \notin A$,}
  \end{cases}
\end{align}
so that $f_{\!A} = \sum_{w \,\in\, A} w$.
\end{definition}

\begin{definition}[The characteristic series of $W_{\G;\alpha\omega}$]\label{defn:CharSeriesSigmaAW}
Given a digraph $\G$, the characteristic series of all walks from $\alpha$ to $\omega$ on $\G$, denoted by $\Sigma_{\G;\alpha\omega}$, is the formal power series in $W_\G$ over $\mathbb{Z}$ defined by
  \begin{align}
    \Sigma_{\G;\alpha\omega} = \sum_{w \,\in\, W_{\G;\alpha\omega}} w. \label{eqn:CharSeries}
  \end{align}
The coefficient $\big(\Sigma_{\G;\alpha\omega},w\big)$ of a walk $w$ in $\Sigma_{\G;\alpha\omega}$ is 1 if $w$ is found in $W_{\G;\alpha\omega}$, and 0 otherwise.
\end{definition}

The goal of this section is to exploit the results of Sections \ref{sec:S3:FamilyOfPartitions} and \ref{sec:S4:WalkReductionWalkDressing} to identify and factorise out various infinite geometric series of cycles that appear on the right-hand side of Eq.~\eqref{eqn:CharSeries}. These infinite geometric series can then be resummed into a formal closed form, leading to a new expression for the characteristic series $\Sigma_{\G;\alpha\omega}$. In order to carry out this decomposition of $\Sigma_{\G;\alpha\omega}$ into subseries, it is necessary to extend the nesting operation to formal power series.

\begin{definition}[Nesting product of two walk series]\label{defn:NestingWalkSeries}
Let $f$ and $g$ be formal power series in $W_\G$ over $\mathbb{Z}$. Then $f \nest g$ is the formal power series obtained by nesting every term of $g$ into every term of $f$, and multiplying the corresponding coefficients:
\begin{align}
  f \nest g = \sum_{w_1, w_2\, \in \,W_{\G} } (f,w_1) (g,w_2)\,w_1 \nest w_2.
\end{align}
\end{definition}

\begin{remark}
Definition \ref{defn:NestingWalkSeries} is consistent with defining the nesting operation to be distributive over addition, so that two formal power series $f = a w_1 + b w_2$ and $g = c w_3 + d w_4$ multiply in the expected way:
\begin{align*}
  f \nest g &= \big(a w_1 + b w_2\big)\nest\big(c w_3 + d w_4\big)\\
  &= ac\,w_1 \nest w_3 + ad\,w_1 \nest w_4 + bc\, w_2\nest w_3 + bd\, w_2 \nest w_4.
\end{align*}
\end{remark}

For a given dressing signature $\dsig$, the effect of resumming the infinite geometric series of cycles in the characteristic series $\Sigma_{\G;\alpha\omega}$ is to reduce $\Sigma_{\G;\alpha\omega}$ from a sum over all walks to a sum over $\dsig$-irreducible walks, each of which is modified so as to include the sums over all possible configurations of resummable cycles off each of its vertices. We implement this modification by replacing each vertex in the walk by a $\dsig$-dressed vertex, which represents the sum of all possible configurations of resummable cycles off that vertex.

\begin{definition}[$\dsig$-dressed vertex]\label{defn:KDressedVertex}
Let $\G$ be a digraph, $\dsig$ be a dressing signature, and $\alpha$ be a vertex on $\G$. Then the vertex $\alpha$ dressed by $\dsig$-structured cycles on $\G$ (also termed the $\dsig$-dressed vertex), denoted by $(\alpha)_{\G}^\dsig$, is the formal power series defined by
\begin{subequations}\label{eqn:KDressedVertex}
\begin{align}
(\alpha)_{\G}^\dsig = \sum_{w \,\in \,C^{\dsig *}_{\G;\alpha}} \!\!w
\end{align}
where $C^{\dsig}_{\G;\alpha}$ is the set of $\dsig$-structured cycles of Definition \ref{defn:CkCycleSets}. In other words, $(\alpha)_{\G}^\dsig$ is the characteristic series of $C^{\dsig *}_{\G;\alpha}$. Then by exploiting the definition of the Kleene star and formally resumming the geometric series of $\dsig$-structured cycles, we find
\begin{align}
  (\alpha)_{\G}^\dsig
  = \sum_{n=0}^\infty \sum_{w \,\in\,\big(C_{\G;\alpha}^\dsig\big)^n} \!\!w
  \:=\: \sum_{n=0}^\infty \Bigg[\sum_{c\,\in\,C_{\G;\alpha}^\dsig} c \,\Bigg]^n
  \equiv \bigg[1 - \sum_{c \,\in\,C_{\G;\alpha}^\dsig} c\bigg]^{-1},
\end{align}
where the first equality follows from the definition of the Kleene star (see Definition \ref{defn:KleeneStar}), the second from the definition of the nesting power of a set of cycles (see Definition \ref{defn:NestingPower}), and the third is to be taken as a definition of the short-hand expression on the right-hand side. The dressed vertex $(\alpha)^\dsig_{\G}$ is the sum of all walks off $\alpha$ on $\G$ that consist of zero or more $\dsig$-structured cycles off $\alpha$ concatenated together: that is,
\begin{align}
         (\alpha)_{\G}^\dsig =  (\alpha) + \sum_{c \in C^{\dsig}_{\G;\alpha}} c + \sum_{\substack{c_1 \in C^{\dsig}_{\G;\alpha}\\ c_2 \in C^{\dsig}_{\G;\alpha}}} c_1 \nest c_2 + \cdots
\end{align}
\end{subequations}
If $C^{\dsig}_{\G;\alpha}$ is empty then all terms past the first vanish, and $(\alpha)_{\G}^\dsig$ is equal to the trival walk $ (\alpha)$. Two simple examples of dressed vertices are depicted in Figure \ref{fig:dressed_vertex}.
\end{definition}

\begin{figure}
\centering
\includegraphics[scale=1.25]{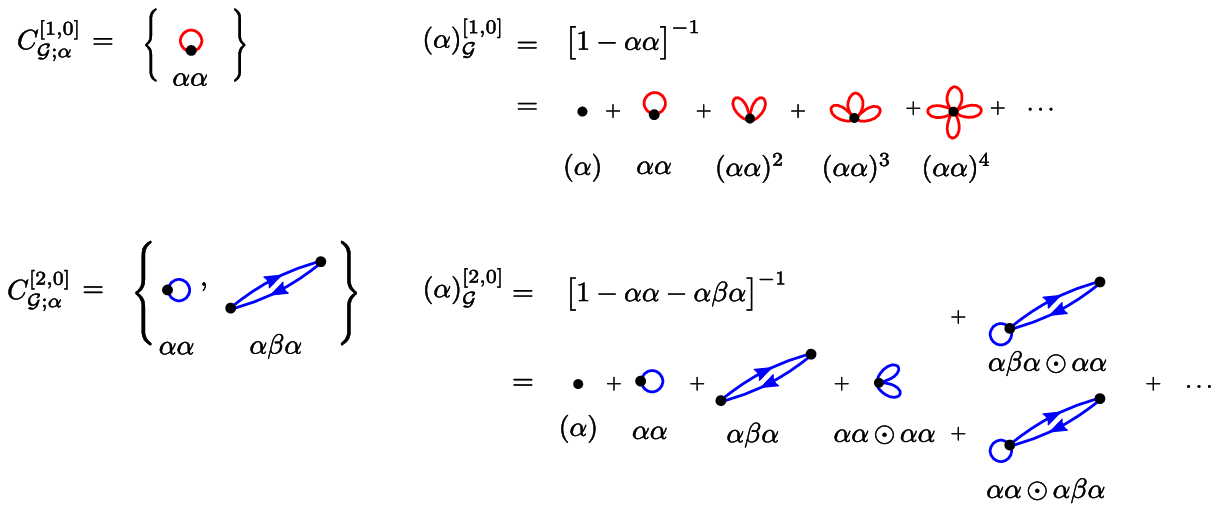}
  \caption{Two examples of a set of $\dsig$-structured cycles $C^{\dsig}_{\G;\alpha}$ (left) and the corresponding $\dsig$-dressed vertex $(\alpha)^\dsig_\G$ (right). The top example shows the case of $\dsig = [1,0]$, in which case $(\alpha)^{[1,0]}_\G$ is simply the (formal) infinite geometric series in the loop $\alpha\alpha$. The bottom example shows $\dsig = [2,0]$ in the case where only a single backtrack $\alpha\beta\alpha$ exists. Then $(\alpha)^{[2,0]}_\G$ is the infinite geometric series in $\alpha\alpha + \alpha\beta\alpha$; that is, the sum of all walks that can be formed by concatenating zero or more copies of $\alpha\alpha$ and $\alpha\beta\alpha$.}
  \label{fig:dressed_vertex}
\end{figure}

\noindent Recall from Definitions \ref{defn:KStructuredCycles} and \ref{defn:CkCycleSets} that a $\dsig$-structured cycle is defined recursively as having a base cycle not longer than $k_0$ and zero or more $[k_1,\ldots,k_{D-1},0]$-structured child cycles nested off each internal vertex. This leads to the following recursive definition for $(\alpha)^\dsig_\G$.

\begin{prop}[A closed-form expression for a $\dsig$-dressed vertex]\label{prop:ClosedFormKDressedVertex}
For a given dressing signature $\dsig = [k_0,\ldots,k_{D-1},0]$, the $\dsig$-dressed vertex $(\alpha)^\dsig_{\G}$ can be written as
\begin{align}
         (\alpha)^\dsig_{\G}
         = \left[1 -
         \sum_{L=1}^{k_0}
         \sum_{S_{\G;\alpha;L}} \alpha\mu_2\ldots\mu_L\alpha \nest (\mu_L)_{\G\backslash\alpha\cdots\mu_{L-1}}^{[k_1,\ldots,k_{d-1},0]} \nest \cdots \nest (\mu_2)_{\G\backslash\alpha}^{[k_1,\ldots,k_{d-1},0]}\right]^{-1}\label{eqn:KDressedVertexClosedForm}
\end{align}
where $\alpha\mu_2\cdots\mu_L\alpha \in S_{\G;\alpha;L}$ is a simple cycle of length $L \le k_0$ and $[k_1,\ldots,k_{d-1},0]$ is the dressing signature obtained by deleting the first element of $\dsig$.
\end{prop}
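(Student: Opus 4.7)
The plan is to establish the identity by unpacking both sides via the recursive structure of $\dsig$-structured cycles laid down in Definition~\ref{defn:CkCycleSets}. Starting from the formal geometric-series definition
\begin{align*}
(\alpha)^\dsig_{\G} = \bigg[1 - \sum_{c\,\in\,C^\dsig_{\G;\alpha}} c\bigg]^{-1},
\end{align*}
the only real work is to rewrite the inner sum $\sum_{c\,\in\,C^\dsig_{\G;\alpha}} c$ in the bracketed expression as the double sum that appears in Eq.~\eqref{eqn:KDressedVertexClosedForm}.

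The first step is to substitute the explicit expression for $C^\dsig_{\G;\alpha}$ from Definition~\ref{defn:CkCycleSets}, which writes it as a union (indexed by base-cycle length $L$ from $1$ to $k_0$ and by choice of simple base cycle $\alpha\mu_2\cdots\mu_L\alpha\in S_{\G;\alpha;L}$) of sets of the form $\{\alpha\mu_2\cdots\mu_L\alpha\} \nest C^{[k_1,\ldots,k_{D-1},0]\,*}_{\G\backslash\alpha\cdots\mu_{L-1};\mu_L} \nest \cdots \nest C^{[k_1,\ldots,k_{D-1},0]\,*}_{\G\backslash\alpha;\mu_2}$. The prime factorisation theorem (Theorem~\ref{thm:PrimeFactorisationTheorem}) guarantees that this union is disjoint, so the sum over $C^\dsig_{\G;\alpha}$ breaks cleanly into a sum over $L$, a sum over simple base cycles, and a product (under $\nest$) of sums over $[k_1,\ldots,k_{D-1},0]$-structured cycle combinations off each internal vertex.

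The second step is to recognise, directly from Definition~\ref{defn:KDressedVertex}, that each of those inner factors is itself a dressed vertex:
\begin{align*}
\sum_{w\,\in\,C^{[k_1,\ldots,k_{D-1},0]\,*}_{\G';\mu_i}} w \;=\; (\mu_i)^{[k_1,\ldots,k_{D-1},0]}_{\G'},
\end{align*}
with $\G' = \G\backslash\alpha\cdots\mu_{i-1}$. Substituting this into the decomposed sum, using the distributivity of $\nest$ over addition (Definition~\ref{defn:NestingWalkSeries}), immediately yields the bracketed expression in Eq.~\eqref{eqn:KDressedVertexClosedForm}, and the result follows.

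The only non-routine point is the justification of the passage from unions of walk sets to sums of formal power series with all coefficients equal to $1$, which requires the disjointness of the union (so that no walk is counted twice) and the uniqueness of the nesting factorisation into a base cycle plus child dressed-cycle combinations; both are supplied by Theorem~\ref{thm:PrimeFactorisationTheorem}. Everything else is essentially notational bookkeeping, so I do not anticipate serious obstacles.
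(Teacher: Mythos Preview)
Your proposal is correct and follows essentially the same approach as the paper, which simply states that the result follows directly from combining the definition of the $\dsig$-dressed vertex (Definition~\ref{defn:KDressedVertex}) with the recursive description of $C^\dsig_{\G;\alpha}$ (Definition~\ref{defn:CkCycleSets}). Your write-up just makes explicit the bookkeeping that the paper leaves implicit, including the appeal to unique factorisation for disjointness.
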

\begin{proof}
The proof follows directly from combining the definitions of the $\dsig$-dressed vertex (Definition \ref{defn:KDressedVertex}) and the $\dsig$-structured cycles (Definition \ref{defn:CkCycleSets}).
\end{proof}

We are now in a position to define an operator $\sumDress{\dsig}{\G}$ that maps a given $\dsig$-irreducible walk to the sum of all walks in the set $\Dress{\dsig}{\G}[i]$. As for the definition of $\Dress{\dsig}{\G}[i]$ in Section \ref{sec:S4:WalkReductionWalkDressing}, it is convenient to first introduce a family of auxiliary operators $\sumdress{\dsig;l}{\G}$ which act on $(\dsig,l)$-irreducible cycles. These are defined so that $\sumdress{\dsig;l}{\G}$ maps a given $(\dsig,l)$-irreducible cycle $q$ to the sum of all walks in the set $\dress{\dsig;l}{\G}[q]$.

\begin{definition}[The operators $\sumdress{\dsig;l}{\G}$]\label{defn:sumdressOperator}
Let $\G$ be a digraph, $\dsig$ be a dressing signature of depth $D$, and $0 \le l \le D$ be a parameter. Then $\sumdress{\dsig;l}{\G} : Q^{\dsig;l}_\G \rightarrow \fps{\mathbb{Z}}{W_\G}$ is a map from the set of $(\dsig,l)$-irreducible cycles on $\G$ to the space of formal power series in $W_\G$ over $\mathbb{Z}$. Its action is to map a given $(\dsig,l)$-irreducible walk $q$ to the characteristic series of the set $\dress{\dsig;l}{\G}[q]$; that is:
\begin{align}
  \sumdress{\dsig;l}{\G}\big[q\big] = \sum_{c\,\in\,\dress{\dsig;l}{\G}[q]} c.
\end{align}
Explicitly, let $q$ have decomposition
\begin{align}
  q = \alpha\mu_2\cdots\mu_L \nest \Bigg[\Nest_{j=1}^{N_L} c_{L,j}\Bigg] \nest \cdots \nest \Bigg[\Nest_{j=1}^{N_2} c_{2,j}\Bigg],
\end{align}
where $\alpha\mu_2\cdots\mu_L\alpha$ is a simple cycle of length $L \ge 1$, and $N_i \ge 0$. Then $\sumdress{\dsig;l}{\G}$ maps $q$ to the formal power series given by (cf.~Definition \ref{defn:CycleDressingOperators})
\begin{align*}
\sumdress{\dsig;l}{\G}\big[q\big] = \alpha\mu_2\cdots\mu_L\alpha
&\nest \Nest_{j=1}^{N_L} \left(
             (\mu_L)^{[k_{l_{L,j}},\ldots,k_{D-1},0]}_{\G\backslash\alpha\cdots\mu_{L-1}} \nest\sumdress{\dsig;l_{L,j}}{\G\backslash\alpha\cdots\mu_{L-1}}\big[c_{L,j}\big]\right)
\nest (\mu_L)^{\dsig_{\,L}}_{\G\backslash\alpha\cdots\mu_{L-1}}\\
&\nest \cdots \\
&\nest \Nest_{j=1}^{N_2} \left(
            (\mu_2)^{[k_{l_{2,j}},\ldots,k_{D-1},0]}_{\G\backslash\alpha} \nest\sumdress{\dsig;l_{2,j}}{\G\backslash\alpha}\big[c_{2,j}\big] \right)
\nest (\mu_2)^{\dsig_{\,2}}_{\G\backslash\alpha}
\end{align*}
where
\begin{align*}
\dsig_{\,\,i} = \begin{cases}
  [k_{l+1},\ldots,k_{D-1},0] & \text{if $L \le k_l$ and $N_i = 0$,}\\
  [k_0,\ldots,k_{D-1},0] & \text{otherwise,}
\end{cases}
\end{align*}
and the parameters $l_{i,j}$ for $2\le i \le L$ and $1 \le j \le N_i$ are given by
\begin{align*}
l_{i,j} = \begin{cases}
l+1& \text{if $L\le k_l$ and $j = 1$,}\\
0& \text{otherwise.}
\end{cases}
\end{align*}
Note that the special treatment of the first child in each hedge when $L \le k_l$ is due to the fact that this child has a local depth of $l+1$.
\end{definition}

We are now in a position to define $\sumDress{\dsig}{\G}$, which maps a given $\dsig$-irreducible walk $i$ to the sum of all walks in the set $\Dress{\dsig}{\G}[i]$.

\begin{definition}[The operator $\sumDress{\dsig}{\G}$]\label{defn:sumDressOperator}
Let $\G$ be a digraph and $\dsig$ be a dressing signature. Then $\sumDress{\dsig}{\G} : I^{\dsig}_\G \rightarrow \fps{\mathbb{Z}}{W_\G}$ is a map from the set of $\dsig$-irreducible walks on $\G$ to the space of formal power series in $W_\G$ over $\mathbb{Z}$. Its action is to map a given $\dsig$-irreducible walk $i$ to the characteristic series of the set $\Dress{\dsig}{\G}[i]$; that is:
\begin{align}
  \sumDress{\dsig}{\G}\big[i\big] = \sum_{w\,\in\,\Dress{\dsig}{\G}[i]} w.
\end{align}
Explicitly, let $i$ be a $\dsig$-irreducible walk on $\G$ with decomposition
\begin{align*}
i = \alpha\mu_2\cdots\mu_L\omega
\nest \Bigg[\Nest_{j=1}^{N_{L+1}}c_{L+1,j}\Bigg]
\nest \cdots
\nest \Bigg[\Nest_{j=1}^{N_2}c_{2,j}\Bigg]
\nest \Bigg[\Nest_{j=1}^{N_1}c_{1,j}\Bigg].
\end{align*}
Then $\sumDress{\dsig}{\G}$ maps $i$ to the formal power series (cf.~Definition \ref{defn:WalkDressingOperator})
\begin{align*}
\sumDress{\dsig}{\G}\big[i\big] = \alpha\mu_2\cdots\mu_L\omega
&\nest \Nest_{j=1}^{N_{L+1}}\left(
         (\omega)_{\G\backslash\alpha\cdots\mu_L}^{\dsig} \nest\sumdress{\dsig;0}{\G\backslash\alpha\cdots\mu_L}\big[c_{L+1,j}\big]\right)\nest \,(\omega)_{\G\backslash\alpha\cdots\mu_L}^{\dsig}\\
&\nest \:\cdots \\
&\nest \Nest_{j=1}^{N_1}\left(
         (\alpha)_{\G}^{\dsig}
         \nest \sumdress{\dsig;0}{\G}\big[c_{1,j}\big]\right)
         \nest \,(\alpha)_{\G;}^{\dsig},
\end{align*}
where $\sumdress{\dsig;0}{\G}$ is given in Definition \ref{defn:sumdressOperator}.
\end{definition}

\begin{theorem}\label{thm:CharSeriesIsDressedKIrred}
For a digraph $\G$ and an arbitrary dressing signature $\dsig$, the characteristic series $\Sigma_{\G;\alpha\omega}$ can be written as
\begin{align*}
  \Sigma_{\G;\alpha\omega} = \sum_{i \,\in \,I^{\dsig}_{\G;\alpha\omega}} \sumDress{\dsig}{\G}[i]
\end{align*}
where $I^{\dsig}_{\G;\alpha\omega}$ is the set of $\dsig$-irreducible walks from $\alpha$ to $\omega$ on $\G$, as defined in Theorem \ref{thm:StructureOfIK}, and $\sumDress{\dsig}{\G}$ is the dressing operator given in Definition \ref{defn:sumDressOperator}.
\end{theorem}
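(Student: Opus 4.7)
The plan is to derive this identity as a direct consequence of Theorem \ref{thm:FamilyOfPartitions} (the partition of $W_\G$ into dressed cells) together with the definition of $\sumDress{\dsig}{\G}$ as the characteristic series of a cell. The strategy is purely formal rearrangement: start from the defining sum of $\Sigma_{\G;\alpha\omega}$, replace it by a double sum using the partition, and identify each inner sum as $\sumDress{\dsig}{\G}[i]$.

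First I would verify that the partition $\{\Dress{\dsig}{\G}[i] : i \in I^{\dsig}_\G\}$ of $W_\G$ restricts to a partition of $W_{\G;\alpha\omega}$ indexed by $I^{\dsig}_{\G;\alpha\omega}$. The key observation is that the walk dressing operator preserves endpoints: inspecting Definition \ref{defn:WalkDressingOperator}, every walk in $\Dress{\dsig}{\G}[i]$ is obtained from $i$ by nesting cycles off the vertices of $i$, and nesting a closed walk into $w$ changes neither $h(w)$ nor $t(w)$. Hence if $i \in I^{\dsig}_{\G;\alpha\omega}$, then $\Dress{\dsig}{\G}[i] \subseteq W_{\G;\alpha\omega}$, and conversely any $w \in W_{\G;\alpha\omega}$ lies in $\Dress{\dsig}{\G}[\Reduc{\dsig}[w]]$ for $\Reduc{\dsig}[w] \in I^{\dsig}_{\G;\alpha\omega}$ (again because $\Reduc{\dsig}$ preserves endpoints, being defined in terms of the base path plus $\reduc{\dsig;0}$ applied to each child cycle).

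Granted this, the rest is a single chain of equalities:
\begin{align*}
\Sigma_{\G;\alpha\omega}
\;=\; \sum_{w \,\in\, W_{\G;\alpha\omega}} w
\;=\; \sum_{i \,\in\, I^{\dsig}_{\G;\alpha\omega}}\ \sum_{w \,\in\, \Dress{\dsig}{\G}[i]} w
\;=\; \sum_{i \,\in\, I^{\dsig}_{\G;\alpha\omega}} \sumDress{\dsig}{\G}\big[i\big],
\end{align*}
where the first equality is Definition \ref{defn:CharSeriesSigmaAW}, the second is the partitioning of $W_{\G;\alpha\omega}$ established above (which guarantees that no walk is omitted and no walk is double-counted, so that the coefficients on both sides agree term-by-term as elements of $\fps{\mathbb{Z}}{W_\G}$), and the third is Definition \ref{defn:sumDressOperator}.

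I do not anticipate a genuine obstacle here: the substantive content has already been done in Theorems \ref{thm:FamilyOfPartitions} and \ref{thm:RDeltaInverse}. The only point requiring a line of argument is the restriction from $W_\G$ to $W_{\G;\alpha\omega}$, which boils down to the elementary fact that both $\Reduc{\dsig}$ and $\Dress{\dsig}{\G}$ act non-trivially only on the child subtrees of the base path, and hence leave the endpoints invariant. Everything else is a bookkeeping exercise about rearranging a formal sum along a set-theoretic partition, which is legitimate in $\fps{\mathbb{Z}}{W_\G}$ because each walk $w$ receives coefficient $1$ exactly once on each side.
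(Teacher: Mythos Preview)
Your proof is correct and follows essentially the same route as the paper: the chain of equalities using Definition~\ref{defn:CharSeriesSigmaAW}, the partition from Theorem~\ref{thm:FamilyOfPartitions}, and Definition~\ref{defn:sumDressOperator} is exactly what the paper does. Your explicit verification that the partition restricts from $W_\G$ to $W_{\G;\alpha\omega}$ via endpoint-preservation of $\Reduc{\dsig}$ and $\Dress{\dsig}{\G}$ is a point the paper glosses over, so your argument is, if anything, slightly more careful.
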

\begin{proof}
Starting from the definition of $\Sigma_{\G;\alpha\omega}$, we have
\begin{align}
  \Sigma_{\G;\alpha\omega}
  = \sum_{w \,\in \,W_{\G;\alpha\omega}} \! w
  = \sum_{i \,\in\, I^\dsig_{\G;\alpha\omega}} \sum_{w \,\in\, \Dress{\dsig}{\G}[i]} w
  =  \sum_{i \,\in\, I^\dsig_{\G;\alpha\omega}} \sumDress{\dsig}{\G}[i],
\end{align}
where the second equality follows from the fact that the family of sets $\big\{\Dress{\dsig}{\G}[i] : i \in I^\dsig_{\G;\alpha\omega}\big\}$ form a partition of $W_{\G;\alpha\omega}$ (see Theorem \ref{thm:FamilyOfPartitions}), and the third uses the definition of $\sumDress{\dsig}{\G}[i]$ (Definition \ref{defn:sumDressOperator}).
\end{proof}
Theorem \ref{thm:CharSeriesIsDressedKIrred} represents the result discussed in the introduction to this article: a representation of the sum of all walks on an digraph $\G$ as a sum over $\dsig$-irreducible walks, with each $\dsig$-irreducible walk being dressed by all possible configurations of resummable cycles.

\subsection{The sum of all walk weights on a weighted digraph}\label{sec:52:ResummedWeightSeries}
In this section we extend the results of \S\ref{sec:51:ResummedCharSeries} to the case of a weighted graph, with a view to providing a starting point for applications of walk-sums to the computation of matrix functions.
\begin{definition}[Weighted digraph]
A weighted digraph $(\G, \wt)$ is a digraph $\G$ paired with a weight function $\wt$ that assigns a weight $\wt[e]$ to each edge $e$ of $\G$. To make the discussion concrete while retaining some generality, we consider here a weight function that assigns a complex matrix of size $d_\beta \times d_\alpha$ to the edge $(\alpha\beta)$. We denote the weight of the edge $(\alpha\beta)$ by $\ew_{\beta\alpha} = \wt[(\alpha\beta)]$.
\end{definition}

\begin{definition}[Weight of a walk]
Given a weighted digraph $(\G, \wt)$, we extend the domain of the weight function $\wt$ from edges to walks by
defining the weight of a trivial walk $(\alpha) \in T_\G$ to be the $d_\alpha \times d_\alpha$ identity matrix $\mat{I}_\alpha$, and the weight of a walk $w = \alpha\mu_2\cdots\mu_L\omega$ of length $L \ge 1$ to be the right-to-left product of the weights of the edges traversed by $w$:
\begin{align*}
  \wt\big[\alpha\mu_2\cdots\mu_L\omega\big] &= \wt[(\mu_L\omega)]\cdots\wt\big[(\mu_2\mu_3)\big]\wt\big[(\alpha\mu_2)\big]\\
  &= \ew_{\omega\mu_L}\cdots\ew_{\mu_3\mu_2}\ew_{\mu_2\alpha}.
\end{align*}
Note that the ordering of the edge weights is suitable for the matrix multiplication to be carried out, the end result being a matrix of size $d_\omega \times d_\alpha$.
\end{definition}

\begin{definition}[Weight of a formal power series]\label{def:WeightOfFPS}
Given a formal power series $f$, it is natural to interpret the weight of $f$ as the value obtained on replacing each walk $w$ in $f$ by its weight $\wt[w]$; that is
\begin{align}
  \wt[f\big] = \wt\bigg[\sum_{w \,\in \,W_{\G;\alpha\omega}} \!(f,w) w\bigg] = \sum_{w \,\in \,W_{\G;\alpha\omega}} \!(f,w) \,\wt[w].
\end{align}
Note that this series does not necessarily converge: its behaviour depends on the details both of both the weight function $\wt$ and the formal power series $f$.
\end{definition}

\begin{remark}[Weight of a $\dsig$-dressed vertex]\label{rem:WeightOfKDressedVertex}
  Given a weighted digraph $(\G,\wt)$ and a dressing signature $\dsig = [k_0,\ldots,k_{D-1},0]$, it follows from Proposition \ref{prop:ClosedFormKDressedVertex} and Definition \ref{def:WeightOfFPS} that the weight of the $\dsig$-dressed vertex $(\alpha)^\dsig_\G$ -- assuming it converges -- is given by
\begin{align*}
  \wt\big[(\alpha)^\dsig_{\G}\big]
  = \left[1 - \sum_{L=1}^{k_0} \sum_{S_{\G;\alpha;L}}
  \ew_{\alpha\mu_L}
  \wt\big[(\mu_L)^{[k_1,\ldots,k_{D-1},0]}_{\G\backslash\alpha\cdots\mu_{L-1}}\big]
  \cdots
  \ew_{\mu_3\mu_2}
  \wt\big[(\mu_2)^{[k_1,\ldots,k_{D-1},0]}_{\G\backslash\alpha}\big]
  \ew_{\mu_2\alpha}
  \right]^{-1}.
\end{align*}
Note that since $\wt\big[(\mu)^{[0]}_\G\big] = \wt\big[(\mu)\big] = \mat{I}_\mu$, the recursion terminates after $D$ iterations. A sufficient condition for $\wt\big[(\alpha)^\dsig_{\G}\big]$ to converge is that
\begin{align*}
  \bigg\vert\sum_{L=1}^{k_0} \sum_{S_{\G;\alpha;L}}
  \ew_{\alpha\mu_L}
  \wt\big[(\mu_L)^{[k_1,\ldots,k_{D-1},0]}_{\G\backslash\alpha\cdots\mu_{L-1}}\big]
  \cdots
  \ew_{\mu_3\mu_2}
  \wt\big[(\mu_2)^{[k_1,\ldots,k_{D-1},0]}_{\G\backslash\alpha}\big]
  \ew_{\mu_2\alpha} \bigg\vert < 1,
\end{align*}
that is, that the sum of the weights of all $\dsig$-structured cycles off $\alpha$ on $\G$ has norm (or modulus, for scalar weights) less than 1.
\end{remark}

As the final result of this section, we present an expression for the sum of the weights of all walks from $\alpha$ to $\omega$ on $\G$ -- assuming this sum converges -- as a sum over $\dsig$-irreducible walks.

Note that it follows from Definition \ref{def:WeightOfFPS} that the sum of the weight of all walks from $\alpha$ to $\omega$ on a $\G$ is the weight of the characteristic series $\Sigma_{\G;\alpha\omega}$ of Definition \ref{defn:CharSeriesSigmaAW}:
\begin{align}
\wt\big[\Sigma_{\G;\alpha\omega}\big] = \sum_{w\,\in\,W_{\G;\alpha\omega}} \wt\big[w\big].\label{eqn:CharSeriesWeight}
\end{align}
The results of Sections \ref{sec:S3:FamilyOfPartitions} and \ref{sec:S4:WalkReductionWalkDressing} can now be exploited to identify and resum infinite geometric series appearing in the right-hand side of Eq.~\eqref{eqn:CharSeriesWeight}.

\begin{cor}\label{cor:SumOfAllWalkWeights}
Let $(\G,\wt)$ be a weighted directed graph, $\alpha$ and $\omega$ be two vertices in $\G$, and $\dsig$ be an arbitrary dressing signature, which has depth $D$. Then the sum of the weights of all walks from $\alpha$ to $\omega$ -- assuming it converges -- is given by
\begin{align}
\wt\big[\Sigma_{\G;\alpha\omega}\big] = \sum_{i\,\in\, I^\dsig_{\G;\alpha\omega}} \wt\big[\sumDress{\dsig}{\G}[i]\big],
\end{align}
where $I^\dsig_{\G;\alpha\omega}$ is the set of $\dsig$-irreducible walks from $\alpha$ to $\omega$ on $\G$, and $\wt\big[\sumDress{\dsig}{\G}[i]\big]$ is defined as follows. Let $i$ have decomposition
\begin{align*}
i = \alpha\mu_2\cdots\mu_L\omega
\nest \bigg[\Nest_{j=1}^{N_{L+1}}q_{L+1,j}\bigg]
\nest \cdots
\nest \bigg[\Nest_{j=1}^{N_2}q_{2,j}\bigg]
\nest \bigg[\Nest_{j=1}^{N_1}q_{1,j}\bigg],
\end{align*}
where $q_{i,j}$ is a $(\dsig,0)$-irreducible cycle. Then $\wt\big[\sumDress{\dsig}{\G}[i]\big]$ is given by
\begin{align*}
  \wt\big[\sumDress{\dsig}{\G}[i]\big] =
  &\hphantom{\times}\,\, \wt\big[(\omega)^\dsig_{\G\backslash\alpha\cdots\mu_{L}}\big]
  \sideset{}{^\prime}\prod_{j=1}^{N_{L+1}}
  \bigg(\wt\big[\sumdress{\dsig;0}{\G\backslash\alpha\cdots\mu_{L}}[q_{L+1,j}]\big]
  \wt\big[(\omega)^\dsig_{\G\backslash\alpha\cdots\mu_{L}}\big] \bigg)\, \ew_{\omega\mu_L}\\
  & \times\, \cdots\\
  &\times \wt\big[(\mu_2)^\dsig_{\G\backslash\alpha}\big]
  \sideset{}{^\prime}\prod_{j=1}^{N_{2}}
  \bigg(\wt\big[\sumdress{\dsig;0}{\G\backslash\alpha}[q_{2,j}]\big]
  \wt\big[(\mu_2)^\dsig_{\G\backslash\alpha}\big] \bigg)\,\ew_{\mu_2\alpha}\\
  &\times \wt\big[(\alpha)^\dsig_{\G}\big]
  \sideset{}{^\prime}\prod_{j=1}^{N_{1}}
  \bigg(\wt\big[\sumdress{\dsig;0}{\G}[q_{1,j}]\big]
  \wt\big[(\alpha)^\dsig_{\G}\big] \bigg)
\end{align*}
where the primes on the products indicate that they are to be taken right-to-left, the weights of the $\dsig$-dressed vertices are evaluated according to Remark \ref{rem:WeightOfKDressedVertex}, and $\wt\big[\sumdress{\dsig;0}{\G}[q_{i,j}]\big]$ is defined as follows. Let $q$ be a $(\dsig,l)$-irreducible cycle with decomposition
\begin{align}
q = \alpha\mu_2\ldots\mu_L\alpha
\nest \bigg[\Nest_{j=1}^{N_L}c_{L,j}\bigg]
\nest \cdots
\nest \bigg[\Nest_{j=1}^{N_2}c_{2,j}\bigg].
\end{align}
Then $\wt\big[\sumdress{\dsig;l}{\G}[q]\big]$ for $0 \le l \le D$ is given by
\begin{align*}
\wt\big[\sumdress{\dsig;l}{\G}[q]\big] =& \hphantom{\times}\:\:
\ew_{\alpha\mu_L}\wt\big[(\mu_L)^{\dsig_L}_{\G\backslash\alpha\cdots\mu_{L-1}}\big]
  \sideset{}{^\prime}\prod_{j=1}^{N_{L}}
  \bigg(\wt\big[\sumdress{\dsig;l_{L,j}}{\G\backslash\alpha\cdots\mu_{L-1}}[c_{L,j}]\big]\,
  \wt\big[(\mu_L)^{[k_{l_{L,j}},\ldots,k_{D-1},0]}_{\G\backslash\alpha\cdots\mu_{L-1}}\big] \bigg)\\
  &\times\,\cdots\\
  &\times \ew_{\mu_3\mu_2}\wt\big[(\mu_2)^{\dsig_2}_{\G\backslash\alpha}\big]
  \sideset{}{^\prime}\prod_{j=1}^{N_{2}}
  \bigg(\wt\big[\sumdress{\dsig;l_{2,j}}{\G\backslash\alpha}[c_{2,j}]\big]\,
  \wt\big[(\mu_2)^{[k_{l_{2,j}},\ldots,k_{D-1},0]}_{\G\backslash\alpha}\big] \bigg)\\
  &\times \ew_{\mu_2\alpha}
\end{align*}
where
\begin{equation*}
  \dsig_i = \begin{cases}
    [k_{l+1},\ldots,k_{D-1},0] & \text{if $L \le k_l$ and $N_i = 0$,}\\
    [k_0,\ldots,k_{D-1},0] & \text{otherwise,}
  \end{cases}
\end{equation*}
and
\begin{equation*}
  l_{i,j} = \begin{cases}
    l+1 & \text{if $L \le k_l$ and $j = 1$,}\\
    0 & \text{otherwise.}
  \end{cases}
\end{equation*}
\end{cor}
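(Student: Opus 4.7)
The proof is a direct application of Theorem \ref{thm:CharSeriesIsDressedKIrred} combined with the linearity of the weight function. First I would observe that, by Theorem \ref{thm:CharSeriesIsDressedKIrred}, we have the identity $\Sigma_{\G;\alpha\omega} = \sum_{i \in I^\dsig_{\G;\alpha\omega}} \sumDress{\dsig}{\G}[i]$ as an equality in $\fps{\mathbb{Z}}{W_\G}$. By Definition \ref{def:WeightOfFPS}, the weight function acts term-by-term on a formal power series and is therefore $\mathbb{Z}$-linear. Hence, provided the sum of walk weights converges, we may apply $\wt$ to both sides to obtain
\begin{align*}
\wt\big[\Sigma_{\G;\alpha\omega}\big] = \sum_{i \,\in\, I^\dsig_{\G;\alpha\omega}} \wt\big[\sumDress{\dsig}{\G}[i]\big].
\end{align*}
This reduces the corollary to the evaluation of $\wt[\sumDress{\dsig}{\G}[i]]$ in the stated form.

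The key subsidiary fact is the compatibility of the nesting product with the weight function. If $w_1$ is an open walk from $\alpha$ to $\omega$ passing through a vertex $\mu$ and $w_2$ is a closed walk off $\mu$, then $\wt[w_1 \nest w_2]$ is obtained by inserting the $d_\mu \times d_\mu$ matrix $\wt[w_2]$ into the matrix product for $\wt[w_1]$ at the position corresponding to $\mu$. When several closed walks $c_1, c_2, \ldots$ are nested off the same vertex $\mu$ (in the left-associative sense $(\cdots((w_1\nest c_1)\nest c_2)\nest\cdots)$), their weights concatenate into a matrix product $\cdots\wt[c_2]\,\wt[c_1]$ acting at $\mu$. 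By Definition \ref{defn:NestingWalkSeries} this property extends bilinearly to formal power series, so the weight of a nesting product of convergent series is the matrix product of their weights with the appropriate right-to-left ordering.

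Applying this principle to the explicit form of $\sumDress{\dsig}{\G}[i]$ given in Definition \ref{defn:sumDressOperator}, each factor of the form $(\mu_i)^{\dsig}_{\G'}$ contributes the matrix $\wt[(\mu_i)^{\dsig}_{\G'}]$ evaluated in closed form via Remark \ref{rem:WeightOfKDressedVertex}, each factor $\sumdress{\dsig;0}{\G'}[q_{i,j}]$ contributes its weight, and the simple path $\alpha\mu_2\cdots\mu_L\omega$ contributes the product of edge weights $\ew_{\omega\mu_L}\cdots\ew_{\mu_2\alpha}$. The graph restrictions $\G\backslash\alpha\cdots\mu_{i-1}$ are inherited directly from the corresponding restrictions in Definition \ref{defn:sumDressOperator}. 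The recursive formula for $\wt[\sumdress{\dsig;l}{\G}[q]]$ follows by the same argument applied to Definition \ref{defn:sumdressOperator}, with the subtlety that the first child off each internal vertex carries local depth $l+1$ and is therefore dressed by $[k_{l+1},\ldots,k_{D-1},0]$-structured cycles, while subsequent children have local depth $0$ and are dressed by $\dsig$-structured cycles. This is exactly the content of the parameters $\dsig_i$ and $l_{i,j}$ in the statement.

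The main obstacle is not conceptual but notational: carefully tracking the right-to-left ordering of matrix products, the progressive graph restrictions, and the matching of each local-depth index $l_{i,j}$ to the correct flavour of dressed vertex. The cleanest route is to establish by induction on $D - l$ an explicit lemma giving the formula for $\wt[\sumdress{\dsig;l}{\G}[q]]$, using Definition \ref{defn:sumdressOperator} for the inductive step and terminating on the identity $\wt[(\mu)^{[0]}_\G] = \mat{I}_\mu$, which arises whenever the recursion reaches the trivial dressing signature. The formula for $\wt[\sumDress{\dsig}{\G}[i]]$ then follows by a single application of the same nesting-to-matrix-product principle to Definition \ref{defn:sumDressOperator}. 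Convergence of each geometric series involved is needed to justify the formal inversions in Remark \ref{rem:WeightOfKDressedVertex}, and is subsumed into the blanket convergence hypothesis of the corollary.
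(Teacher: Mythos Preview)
Your proposal is correct and follows essentially the same approach as the paper: invoke Theorem \ref{thm:CharSeriesIsDressedKIrred}, apply the weight function term-by-term via Definition \ref{def:WeightOfFPS}, and read off the explicit formulas by applying $\wt$ to the expressions in Definitions \ref{defn:sumdressOperator} and \ref{defn:sumDressOperator}. The paper's own proof is a two-sentence sketch that omits the discussion of nesting-to-matrix-product compatibility and the inductive bookkeeping you outline, so your version is in fact more detailed than what the paper provides.
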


\begin{proof}
  This result is an immediate consequence of the resummed form of the characteristic series $\sigma_{\G;\alpha\omega}$ (see Theorem \ref{thm:CharSeriesIsDressedKIrred}) and the convention that the weight of a formal power series is the sum of the weights of the individual terms (Definition \ref{def:WeightOfFPS}). The definitions of $\wt\big[\sumdress{\dsig;l}{\G}[q]\big]$ and $\wt\big[\sumDress{\dsig}{\G}[i]\big]$ follow from applying the weight function $\wt$ to the formal series $\sumdress{\dsig;l}{\G}[q]$ (see Definition \ref{defn:sumdressOperator}) and $\sumDress{\dsig}{\G}[i]$ (see Definition \ref{defn:sumDressOperator}).
\end{proof}

An important consequence of Corollary \ref{cor:SumOfAllWalkWeights} is that any problem that can be written as a sum of weights of all walks on a weighted directed graph can be recast as a sum of dressed weights of $\dsig$-irreducible walks, for any dressing signature $\dsig$. Such an expression could be termed an \key{irreducible walk sum} formulation of the original problem. Since the results of Sections \ref{sec:S3:FamilyOfPartitions} and \ref{sec:S4:WalkReductionWalkDressing} hold for arbitrary values of $\dsig$, many different irreducible walk sum expressions for the same series can be found. If the original series is absolutely convergent, then the irreducible walk sum expression for any value of $\dsig$ is also absolutely convergent.

Since the dressing signature can be freely chosen, the most appropriate irreducible walk sum expression can be selected and applied to a given problem. This choice should be made on a case-by-case basis subject to the following trade-off: as $\dsig$ is progressively increased from $[0]$ (corresponding to the original series) a wider and wider variety of terms (those that correspond to $\dsig$-structured cycles) are exactly resummed into closed form. Then on one hand, there remain correspondingly fewer terms to be explicitly summed over (the $\dsig$-irreducible walks), and the irreducible walk sum expression might be expected to converge more quickly than the original series due to the partial series resummation that has already been carried out; but on the other, the weights of the $\dsig$-dressed vertices become increasingly complex to evaluate, taking the form of branched continued fractions of increasing depth and breadth. Upon reaching the maximum possible dressing signature $\dsig_{\,\mathrm{max}}$, there remain only a \key{finite} number of terms to be explicitly summed (i.e.~the simple paths), and the walk sum reduces to a path sum. The applications of such path sum expressions to matrix functions \cite{Giscard2013,Giscard2014} and statistical inference \cite{Giscard2014a} have recently been investigated.
\section{Conclusion and Outlook}\label{sec:S6:Conclusion}
In this article we defined a family of partitions of $W_\G$, the set of all walks on an arbitrary directed graph $\G$. Each partition in this family is indexed by a dressing signature $\dsig$: an integer sequence that identifies a set of cycles on $\G$ with a common well-defined structure. We term these cycles resummable cycles, and a walk that does not traverse any such cycles for a given value of $\dsig$ a $\dsig$-irreducible walk. The cells in the corresponding partition of $W_\G$ then each contain a $\dsig$-irreducible walk $i$ plus all walks that can be formed from $i$ by adding one or more resummable cycles to $i$.

The central objects in constructing this family of partitions are the walk reduction operator $\Reduc{\dsig}$, which serves to map any walk $w$ to its $\dsig$-irreducible `core walk' $\Reduc{\dsig}[w]$ by deleting all resummable cycles from it, and the walk dressing operator $\Dress{\dsig}{\G}$, which maps a $\dsig$-irreducible walk $i$ to the set of all walks that can be formed by adding a (possibly empty) collection of resummable cycles to $i$. We provided explicit definitions of these operators, and showed that these definitions satisfy the properties required in order that the family of sets obtained by applying $\Dress{\dsig}{\G}$ to every $\dsig$-irreducible walk forms a partition of $W_\G$. We further gave an explicit expression for $I^\dsig_\G$, the set of all $\dsig$-irreducible walks on $\G$. This expression is valid for an arbitrary dressing signature $\dsig$.

A direct consequence of these results is that the sum of all walks on an arbitrary digraph $\G$ can be recast as a sum only over $\dsig$-irreducible walks, where each $\dsig$-irreducible walk has been dressed by all possible configurations of resummable cycles. This reformulation corresponds to the exact resummation of infinite geometric series of cycles appearing within the sum over all walks. By choosing different dressing signatures, many different such `partially-resummed' walk series can be obtained. The set of $\dsig$-irreducible walks that appear in the sum, and the details of how each walk is dressed, are determined by the value of $\dsig$ in each case.

The ability to carry out exact resummations of infinite geometric series appearing within the sum over all walks on an arbitrary directed graph promises to have applications to problems that are naturally formulated as a sum of such walks. Examples of such problems include the computation of matrix functions by series summation \cite{Giscard2013}, and the problem of statistical inference in Gaussian graphical models \cite{Malioutov2006,Chandrasekaran2008}. By applying the resummations outlined here, a variety of resummed series -- termed irreducible walk sum expressions -- for the original problem can be derived. For a given value of $\dsig$, the corresponding irreducible walk sum expression is the result of exactly resumming the contributions of all terms in the original series that correspond to $\dsig$-structured cycles on the underlying graph. The convergence behaviour, numerical stability, and mathematical properties of the irreducible walk sum expressions remain interesting open questions.

\section*{Acknowledgments}
This work was conducted under the auspices of a Theodor Heuss Research Fellowship at the Ludwig Maximilian University of Munich. The author would like to thank the Alexander von Humboldt Foundation for financial support, the groups of Ulrich Schollw\"{o}ck and Lode Pollet for their hospitality, and Pierre-Louis Giscard for helpful comments on the manuscript.

\bibliographystyle{siam}

\end{document}